\documentclass[11pt,reqno]{amsart}
\usepackage{a4wide}

\numberwithin{equation}{section}
\usepackage{mathrsfs}
\usepackage{amsfonts}
\usepackage{amsmath}
\usepackage{stmaryrd}
\usepackage{amssymb}
\usepackage{amsthm}
\usepackage{mathrsfs}
\usepackage{url}
\usepackage{amsfonts}
\usepackage{amscd}
\usepackage{indentfirst}
\usepackage{enumerate}
\usepackage{amsmath,amsfonts,amssymb,amsthm}
\usepackage{amsmath,amssymb,amsthm,amscd}
\usepackage{graphicx,mathrsfs}
\usepackage{appendix}

\usepackage[numbers,sort&compress]{natbib}

\usepackage[colorlinks,linkcolor=blue,anchorcolor=blue,citecolor=blue]{hyperref}
\usepackage{color}

\newcommand{\R}{\mathbb{R}}

\newcommand{\N}{\mathbb{N}}
\renewcommand{\theequation}{\arabic{section}.\arabic{equation}}

\setcounter{equation}{0}

\newtheorem{Thm}{Theorem}[section]
\newtheorem{Lem}[Thm]{Lemma}

\newtheorem{Prop}[Thm]{Proposition}

\newtheorem{Rem}[Thm]{Remark}

\begin{document}

\title[Brezis-Nirenberg problem]
{The number of positive solutions  to\\ the Brezis-Nirenberg problem}
\author[D. Cao, P. Luo and S. Peng]{Daomin Cao, Peng Luo and Shuangjie Peng}

\address[Daomin Cao]{Institute of Applied Mathematics, AMSS, The Chinese Academy of Sciences, Beijing 100190, P.R.China and University of Chinese Academy of Sciences, Beijing 100049,  P.R. China}
\email{dmcao@amt.ac.cn}

\address[Peng Luo]{School of Mathematics and Statistics and Hubei Key Laboratory of Mathematical Sciences, Central China Normal University, Wuhan 430079, China}
\email{pluo@mail.ccnu.edu.cn}

\address[Shuangjie Peng]{School of Mathematics and Statistics and Hubei Key Laboratory of Mathematical Sciences, Central China Normal University, Wuhan 430079, China}
\email{sjpeng@mail.ccnu.edu.cn}

\begin{abstract}
In this paper we are concerned with the well-known  Brezis-Nirenberg problem
\begin{equation*}
\begin{cases}
-\Delta u= u^{\frac{N+2}{N-2}}+\varepsilon u, &{\text{in}~\Omega},\\
u>0, &{\text{in}~\Omega},\\
u=0, &{\text{on}~\partial \Omega}.
\end{cases}
\end{equation*}
The existence of multi-peak solutions  to the above problem for small $\varepsilon>0$ was obtained in \cite{Musso1}. However, the uniqueness or the exact  number of positive solutions to the above problem is still unknown.
Here we focus on the local uniqueness of  multi-peak solutions and  the exact  number of positive solutions to the above problem for small $\varepsilon>0$.

By using various  local Pohozaev identities and blow-up analysis, we first detect the relationship between the profile of the blow-up solutions and Green's function of the domain $\Omega$ and then obtain a type of
 local uniqueness results  of blow-up solutions. Lastly  we give a description of the number of positive solutions for small positive $\varepsilon$, which depends also on Green's function.
\end{abstract}

\date{\today}
\maketitle

{\small\small
\keywords {\noindent {\bf Keywords:}
 {Critical Sobolev exponent, Local Pohozaev identity, Existence of solutions, \\ Exact number of solutions, Green's function}
\smallskip
\newline
\subjclass{\noindent {\bf 2010 Mathematics Subject Classification:} 35A02 $\cdot$ 35B09
$\cdot$ 35J05 $\cdot$
35J08 $\cdot$ 35J60}
}
\section{Introduction and main results}
\setcounter{equation}{0}
In this paper, we consider the following Brezis-Nirenberg problem
\begin{equation}\label{1.1}
\begin{cases}
-\Delta u= u^{\frac{N+2}{N-2}}+\varepsilon u, &{\text{in}~\Omega},\\
 u>0, &{\text{in}~\Omega},\\
u=0, &{\text{on}~\partial \Omega},
\end{cases}
\end{equation}
where $N\geq 3$, $\varepsilon>0$ is a small parameter, $\Omega$ is a smooth and bounded domain in $\R^N$.

In 1983, Brezis and Nirenberg  proved in their celebrated paper \cite{Brezis} that if $N\geq 4$,  problem \eqref{1.1} has a solution for
  $\varepsilon \in (0,\lambda_1)$, where $\lambda_1$ denotes the first eigenvalue of $-\Delta$ with
0-Dirichlet boundary condition on $\partial\Omega$.
Also it is well known in \cite{Pohozaev} that problem \eqref{1.1} admits no solutions when $\Omega$ is star-shaped and  $\varepsilon=0$.
On the other hand, Bahri and  Coron \cite{Bahri1} gave an existence result of a positive solution to  problem \eqref{1.1} for $\Omega$ with a nontrivial topology and
 $\varepsilon=0$. Since then a lot of attention has been paid to the limiting behavior  of the solutions $u_\varepsilon$
 of \eqref{1.1} as $\varepsilon\rightarrow 0$. To state such type of results, we introduce  some facts on Green's function.

The Green's function $G(x,\cdot)$ is the solution of
\begin{equation*}
\begin{cases}
-\Delta G(x,\cdot)= \delta_x, &{\text{in}~\Omega}, \\
G(x,\cdot)=0, &{\text{on}~\partial\Omega},
\end{cases}
\end{equation*}
where $\delta_x$ is the Dirac function. For $G(x,y)$, we have the following form
\begin{equation*}
G(x,y)=S(x,y)-H(x,y), ~(x,y)\in \Omega\times \Omega,
\end{equation*}
where $S(x,y)=\frac{1}{(N-2)\omega_N|y-x|^{N-2}}$ is the singular part and $H(x,y)$ is the regular part of $G(x,y)$, $\omega_N$ is a measure of the unit sphere of $\R^N$. For any $x\in \Omega$, we denote $R(x):=H(x,x) $, which is called the Robin function.

Rey \cite{Rey2} proved that if a solution $u_\varepsilon$ of \eqref{1.1}  satisfies
\begin{equation}\label{caa}
|\nabla u_{\varepsilon}|^2\rightharpoonup S^{N/2} \delta_{x_0},~\mbox{as}~\varepsilon \rightarrow 0,
\end{equation}
with $S$  the best Sobolev constant  defined by $$
S=\inf \Big\{\displaystyle\int_{\Omega}|\nabla u|^2~~\big| ~~u\in H^1_0(\Omega),~\displaystyle\int_{\Omega}|u|^{2^*}=1\Big\},$$
then $x_0$ is a  critical point of $R(x)$. Conversely  if $x_0$ is a nondegenerate critical point of $R(x)$ and $N\geq 5$, then \eqref{1.1} has a solution $u_\varepsilon$  satisfying \eqref{caa}.
Similar results are also proved in \cite{Han}. Later, Glangetas \cite{Glangetas} proved that
the solution $u_\varepsilon$ of \eqref{1.1} satisfying \eqref{caa} is unique for $\varepsilon$ small enough
 under some additional conditions.

A natural question is whether \eqref{1.1} has a solution $u_\varepsilon$
concentrated at multi-points.
 In this aspect, Musso and Pistoia \cite{Musso1} gave an affirmative answer. To state their results, we need to introduce  some notations.
As is well-known, the equation $-\Delta u= u^{\frac{N+2}{N-2}} ~~\mbox{in}~\R^N$ has a family of solutions
\begin{equation*}
U_{x,\lambda}(y)=C_N\frac{\lambda^{(N-2)/2}}{(1+\lambda^2|y-x|^2)^{(N-2)/2}},
\end{equation*}
where $x\in\R^N$, $\lambda\in \R^+$ and $C_N=\big(N(N-2)\big)^{(N-2)/{4}}$.
Set
\begin{equation}\label{a1}
A=\displaystyle\int_{\R^N}U^{\frac{N+2}{N-2}}_{0,1},
~B=\displaystyle\int_{\R^N}U^{2}_{0,1}.
\end{equation}
Let $\Psi_{k}: \Omega^{k}\times (\R^+)^{k}\rightarrow \R$ be defined by
\begin{equation*}
\Psi_{k}(x,\lambda)= A^2 \Big(M_k(x)\lambda^{(N-2)/2},\lambda^{(N-2)/2}\Big)-B\sum^{k}_{j=1}\lambda^{2}_{j},
\end{equation*}
where $\lambda^{(N-2)/2}=\Big(\lambda_1^{(N-2)/2},\cdots,\lambda_k^{(N-2)/2}\Big)^T$, the matrix $M_k(x)=\Big(m_{ij}(x)\Big)_{1\leq i,j\leq k}$ is defined by
\begin{equation*}
m_{ii}(x)=R(x_i),~m_{ij}(x)=-G(x_i,x_j),~\mbox{if}~i\neq j.
\end{equation*}
 Musso and Pistoia  \cite{Musso1}  proved  that there exists a family of solutions $u_{\varepsilon}$ to \eqref{1.1} satisfying
\begin{equation}\label{4-6-1}
|\nabla u_{\varepsilon}|^2\rightharpoonup S^{N/2} \sum^k_{i=1}\delta_{a_i},~\mbox{as}~\varepsilon \rightarrow 0,
\end{equation}
if $N\geq 5$ and  $(a^k,\Lambda^k)$ is a  nondegenerate critical  point of $\Psi_{k}$ with
$a^k=(a_1,\cdots,a_{k})$ and some $\Lambda^k=(\lambda_{1},\cdots,
\lambda_{k})$.

On the other hand, for any given $f\in H^1(\Omega)$, let $P$ denote the projection from
$H^1(\Omega)$ onto $H^{1}_0(\Omega)$, i.e., $u=Pf$ is the solution of
\begin{equation*}
\begin{cases}
\Delta u=\Delta f, &{\text{in}~\Omega}, \\
u=0, &{\text{on}~\partial\Omega}.
\end{cases}
\end{equation*}
Now for any $x\in\Omega$ and $\lambda\in \R^+$, we define
\begin{equation*}
\begin{split}
E_{x,\lambda}=\Big\{v\in H^1_0(\Omega)\Big|& ~\Big\langle \frac{\partial PU_{x,\lambda}}{\partial \lambda},v \Big\rangle=\Big\langle \frac{\partial PU_{x,\lambda}}{\partial x_i},v\Big\rangle=0,~\mbox{for}~i=1,\cdots,N\Big\},
\end{split}
\end{equation*}
where $\|\cdot\|$  denotes the basic norm in the Sobolev space $H^1_0(\Omega)$ and $\langle\cdot,\cdot\rangle$  means the corresponding inner product.
Then our first result is on the structure of the blow-up solutions of  \eqref{1.1}.
\begin{Thm}\label{th1-1}
Let $N\ge 5$ and suppose that  $u_{\varepsilon}(x)$ is a solution of \eqref{1.1} with \eqref{4-6-1}. Then  $M_k(a^k)$ is a non-negative matrix  with
$a^k=(a_1,\cdots,a_{k})$ and
 $u_{\varepsilon}(x)$ can be written as
\begin{equation*}
u_{\varepsilon}=\sum^k_{j=1} PU_{x_{j,\varepsilon}, \lambda_{j,\varepsilon}}+w_{\varepsilon},
\end{equation*}
satisfying, for $j=1,\cdots,k$,
 $\lambda_{j,\varepsilon}=
\big(u_\varepsilon(x_{j,\varepsilon})\big)^{\frac{2}{N-2}}$,
\begin{equation*}
 x_{j,\varepsilon}\rightarrow a_j, ~\lambda_{j,\varepsilon}\rightarrow +\infty,~ \|w_{\varepsilon}\|=o(1)~\mbox{and}~w_\varepsilon\in \bigcap^k_{j=1}E_{x_{j,\varepsilon},\lambda_{j,\varepsilon}}.
\end{equation*}
Moreover if  $M_k(a^k)$ is a positive matrix,  then there exist two constants $C_1,C_2$ such that
\begin{equation*}
  0<C_1\leq \varepsilon ^{\frac{1}{N-4}}\lambda_{j,\varepsilon}\leq C_2<+\infty.
\end{equation*}
Furthermore if we denote (by choosing subsequence)
\begin{equation*}
\lambda_j:=\lim_{\varepsilon\rightarrow 0}\big(\varepsilon ^{\frac{1}{N-4}}\lambda_{j,\varepsilon}\big)^{-1},~\mbox{for}~j=1,\cdots,k,
\end{equation*}
then $(a^k,\Lambda^k)$ is a  critical point of $\Psi_{k}$ with
$a^k=(a_1,\cdots,a_{k})$ and $\Lambda^k=(\lambda_{1},\cdots,
\lambda_{k})$.
\end{Thm}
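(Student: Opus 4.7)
The proof naturally splits into three technical blocks: a Struwe-type profile decomposition refined by a Lyapunov--Schmidt reduction that places the error in the prescribed orthogonal complement, a system of local Pohozaev identities on small balls around each $a_j$, and asymptotic expansions of the projections $PU_{x,\lambda}$ in terms of the Green and Robin functions.

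\emph{First block: decomposition.} Since $|\nabla u_\varepsilon|^2\rightharpoonup S^{N/2}\sum_i\delta_{a_i}$, the standard concentration-compactness theorem of Struwe yields a crude decomposition $u_\varepsilon = \sum_j PU_{y_{j,\varepsilon},\mu_{j,\varepsilon}} + \tilde w_\varepsilon$ with $\|\tilde w_\varepsilon\|\to 0$, $y_{j,\varepsilon}\to a_j$ and $\mu_{j,\varepsilon}\to\infty$. I would then apply the implicit function theorem to the finite-dimensional map sending $(x,\lambda)$ to the projection of $u_\varepsilon - \sum_i PU_{x_i,\lambda_i}$ onto $\text{span}\bigl\{\partial PU_{x_i,\lambda_i}/\partial x_{i,l},\ \partial PU_{x_i,\lambda_i}/\partial\lambda_i\bigr\}$; uniform nondegeneracy of this span produces a unique adjustment $(x_{j,\varepsilon},\lambda_{j,\varepsilon})$ for which $w_\varepsilon\in\bigcap_j E_{x_{j,\varepsilon},\lambda_{j,\varepsilon}}$, while choosing $x_{j,\varepsilon}$ to be the local maximum of the $j$th bubble enforces $\lambda_{j,\varepsilon}=u_\varepsilon(x_{j,\varepsilon})^{2/(N-2)}$. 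Inverting the linearized operator $-\Delta-\tfrac{N+2}{N-2}\bigl(\sum_j PU_{x_j,\lambda_j}\bigr)^{4/(N-2)}$ on this orthogonal complement and estimating the residual in a weighted $L^{2N/(N+2)}$ norm upgrades this to the sharp bound $\|w_\varepsilon\|=O\bigl(\varepsilon\lambda_{j,\varepsilon}^{-1}+\lambda_{j,\varepsilon}^{-(N-2)/2}\bigr)$.

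\emph{Second block: Pohozaev identities and the critical point equation.} For $\delta>0$ small, I would multiply the equation by $\partial_l u_\varepsilon$ (momentum form) and by $(y-a_j)\cdot\nabla u_\varepsilon+\tfrac{N-2}{2}u_\varepsilon$ (dilation form) and integrate over $B_\delta(a_j)$. Integration by parts turns the left-hand sides and the critical-exponent term into boundary integrals on $\partial B_\delta(a_j)$, the only interior residue being $\varepsilon\int_{B_\delta(a_j)}u_\varepsilon^2=O(\varepsilon\lambda_{j,\varepsilon}^{-2})$ in the dilation identity. Substituting the decomposition together with the expansion $PU_{x,\lambda}=U_{x,\lambda}-C_N\lambda^{-(N-2)/2}H(x,\cdot)+O(\lambda^{-(N+2)/2})$, and noting that on $\partial B_\delta(a_j)$ each bubble has size $\lambda_{i,\varepsilon}^{-(N-2)/2}$ so that $u_\varepsilon$ is essentially the harmonic profile $C_N\sum_i\lambda_{i,\varepsilon}^{-(N-2)/2}\bigl[\delta_{ij}H(a_j,\cdot)-(1-\delta_{ij})G(a_i,\cdot)\bigr]$ (up to signs), the boundary integrals reduce to algebraic expressions in $m_{ij}(a^k)$ via standard harmonic-function identities. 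The momentum identity yields $\sum_i\lambda_i^{(N-2)/2}\lambda_j^{(N-2)/2}\partial_{x_{j,l}}m_{ij}(a^k)=o(1)$, i.e.\ $\partial_{x_{j,l}}\Psi_k(a^k,\Lambda^k)=0$; the dilation identity, multiplied by $\lambda_{j,\varepsilon}^{N-2}$ and passed to the limit, yields $A^2(N-2)\lambda_j^{(N-4)/2}\sum_i m_{ij}(a^k)\lambda_i^{(N-2)/2}=2B\lambda_j$, i.e.\ $\partial_{\lambda_j}\Psi_k(a^k,\Lambda^k)=0$. The same dilation identity at finite $\varepsilon$, under positivity of $M_k(a^k)$, forces $\varepsilon\lambda_{j,\varepsilon}^{-2}\sim\lambda_{j,\varepsilon}^{-(N-2)}$ and hence $C_1\le\varepsilon^{1/(N-4)}\lambda_{j,\varepsilon}\le C_2$. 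The non-negativity of $M_k(a^k)$ is then a byproduct: the critical-point relation forces $\sum_i m_{ij}(a^k)\lambda_i^{(N-2)/2}>0$ for each $j$, so $M_k(a^k)$ sends the strictly positive vector $(\lambda_i^{(N-2)/2})$ into the positive cone and in particular $(M_k(a^k)\Lambda^{(N-2)/2},\Lambda^{(N-2)/2})>0$.

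\emph{Main obstacle.} The delicate step is producing the sharp error bound on $w_\varepsilon$: the $o(1)$ bound supplied by Struwe is insufficient to isolate the $O(\lambda^{-(N-2)})$ interaction terms and the $O(\varepsilon\lambda^{-2})$ perturbation term that together make up the leading order of the Pohozaev identities. The Lyapunov--Schmidt inversion therefore has to be carried out in weighted norms that cleanly separate the self-interaction of each bubble, the cross-interaction with the regular part of the projection, and the $\varepsilon$-perturbation. Uniform-in-$\varepsilon$ invertibility of the linearized operator on $\bigcap_j E_{x_{j,\varepsilon},\lambda_{j,\varepsilon}}$ is itself nontrivial and rests on both the nondegeneracy of the kernel $\{\partial U_{0,1}/\partial x_i,\,\partial U_{0,1}/\partial\lambda\}$ of the bubble linearization and the geometric fact that the concentration points $x_{j,\varepsilon}$ remain well separated.
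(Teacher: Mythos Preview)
Your overall architecture---Struwe decomposition refined by projection onto the orthogonal complement, then local Pohozaev identities on balls around each concentration point, expanded via the Green/Robin structure of $PU_{x,\lambda}$---is exactly the route the paper takes. The momentum and dilation identities you describe are the paper's \eqref{clp-1} and \eqref{clp-10}, and the balance $\varepsilon\lambda_{j,\varepsilon}^{-2}\sim\lambda_{j,\varepsilon}^{-(N-2)}$ is how the paper obtains the two-sided bound~\eqref{4-16-1}. One technical difference worth noting: rather than substituting the bubble decomposition directly into the boundary integrals, the paper first proves the pointwise representation $u_\varepsilon(x)=A\sum_j\lambda_{j,\varepsilon}^{-(N-2)/2}G(x_{j,\varepsilon},x)+\text{error}$ away from the concentration points (Proposition~\ref{prop2.1}); this renders the boundary terms exactly harmonic and reduces the Pohozaev computation to evaluating fixed bilinear forms on pairs of Green functions (the identities \eqref{luo2} and \eqref{luo1}).

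There is, however, a genuine gap in your argument for the non-negativity of $M_k(a^k)$. You derive that $M_k(a^k)$ sends one particular positive vector $\Lambda^{(N-2)/2}$ into the positive cone, hence $(M_k\Lambda^{(N-2)/2},\Lambda^{(N-2)/2})>0$. But positivity of the quadratic form on a single vector does not imply positive semi-definiteness: for instance $\bigl(\begin{smallmatrix}1&2\\2&1\end{smallmatrix}\bigr)$ sends $(1,1)^T$ to $(3,3)^T$ yet has eigenvalue $-1$. Moreover your argument is circular in its ordering: the limits $\lambda_j$ you use exist only once the two-sided bound $C_1\le\varepsilon^{1/(N-4)}\lambda_{j,\varepsilon}\le C_2$ is in hand, and you obtain that bound by assuming $M_k(a^k)$ is positive. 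The theorem asserts non-negativity \emph{unconditionally}, before any positivity hypothesis.

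The paper's fix is a Perron--Frobenius argument exploiting the sign pattern of $M_k$ (diagonal entries $R(x_i)>0$, off-diagonal entries $-G(x_i,x_j)<0$): for such matrices the eigenvector associated with the smallest eigenvalue $\rho(a^k)$ can be chosen with all components strictly positive (see Appendix~A of Bahri--Li--Rey). Dotting the finite-$\varepsilon$ dilation identity~\eqref{lla} against this eigenvector $\vec{\chi}$ gives $\rho(a^k)\,(\vec{\chi}\cdot\vec{\mu}_{k,\varepsilon})+o(1)>0$ with $\vec{\chi}\cdot\vec{\mu}_{k,\varepsilon}>0$, hence $\rho(a^k)\ge 0$. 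Note also that the paper does not extract the upper bound on $\varepsilon^{1/(N-4)}\lambda_{j,\varepsilon}$ from the Pohozaev identity; it is imported as an a~priori input from Cerqueti--Grossi, and is needed to make the error terms in \eqref{lt3} genuinely lower order before the positivity hypothesis is invoked.
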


When $\Omega$ is a convex domain, it is known from \cite{Grossi} that $\Psi_k(x,\lambda)$ has no critical points in $\Omega^k\times(\R^+)^k$  for $k\ge 2$. Hence combining  Theorem \ref{th1-1}, we conclude  that \eqref{1.1} has no solutions
blowing-up at multiple points on convex domains.
On the other hand, from \cite{Caffarelli,Cardaliaguet,G2002}, we know that Robin function $R(x)$ has a unique critical point on convex
domains, which is also non-degenerate under some conditions.
Therefore, considering the uniqueness result of Glangetas \cite{Glangetas}, we see that problem \eqref{1.1} has a unique  solution  for $\varepsilon$ small enough
and a convex domain $\Omega$.

Next, to study the number of concentrated solutions, for any given $a^k=(a_1,\cdots,a_{k})$ satisfying $\nabla_x \Psi_k(a^k,\Lambda^k)=0$ for some $\Lambda^k\in (\R^+)^k$, we define
$$S_k= \Big\{\Lambda^k=(\lambda_1,\cdots,\lambda_k),
\nabla_x \Psi_k(a^k,\Lambda^k)=0,~\nabla_\lambda \Psi_k(a^k,\Lambda^k)=0\Big\}.$$
Now we can count the number of solutions to \eqref{1.1} satisfying \eqref{4-6-1},
which can be stated as follows.
\begin{Thm}\label{th1.1}
For $N\geq 6$ and any given $a^k=(a_1,\cdots,a_{k})$, suppose that  $M_k(a^k)$ is a positive matrix and
 $(a^k,\Lambda^k)$ is a nondegenerate critical point of $\Psi_{k}$ for any $\Lambda^k\in S_k$.
 Then for $\varepsilon>0$ sufficiently small,
$$
\mbox{the number of solutions to
\eqref{1.1} satisfying \eqref{4-6-1}}~=\sharp S_k,$$
where $\sharp S_k$ is the number of the elements in the set $S_k$.
\end{Thm}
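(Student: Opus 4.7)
The plan is to prove two inequalities: the number of solutions to \eqref{1.1} satisfying \eqref{4-6-1} is both $\ge \#S_k$ and $\le \#S_k$ for $\varepsilon$ small. The lower bound comes from the existence theorem of Musso and Pistoia \cite{Musso1}: for each $\Lambda^k\in S_k$, by hypothesis $(a^k,\Lambda^k)$ is a nondegenerate critical point of $\Psi_k$, so their Lyapunov--Schmidt construction supplies, for all small $\varepsilon>0$, a solution $u_\varepsilon$ of \eqref{1.1} satisfying \eqref{4-6-1} with concentration parameters obeying $\varepsilon^{1/(N-4)}\lambda_{j,\varepsilon}\to \lambda_j^{-1}$. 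Different elements of $S_k$ produce different solutions, since for $\varepsilon$ small the scalings $\varepsilon^{1/(N-4)}\lambda_{j,\varepsilon}$ are close to their limits and therefore distinguish the families. Hence at least $\#S_k$ solutions exist.

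For the upper bound, let $u_\varepsilon$ be any solution satisfying \eqref{4-6-1}. Since $M_k(a^k)$ is assumed positive, Theorem \ref{th1-1} supplies the decomposition $u_\varepsilon=\sum_j PU_{x_{j,\varepsilon},\lambda_{j,\varepsilon}}+w_\varepsilon$ with $x_{j,\varepsilon}\to a_j$, $C_1\le \varepsilon^{1/(N-4)}\lambda_{j,\varepsilon}\le C_2$, and any subsequential limit $\Lambda^k=\lim(\varepsilon^{1/(N-4)}\lambda_{j,\varepsilon})^{-1}$ makes $(a^k,\Lambda^k)$ a critical point of $\Psi_k$, so $\Lambda^k\in S_k$. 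Consequently every solution is attached along subsequences to some element of $S_k$, and it suffices to prove that at most one solution family can share a given $\Lambda^k\in S_k$ as its limit.

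This local uniqueness step is the heart of the argument. Suppose, for contradiction, that two families of solutions $u_\varepsilon^{(1)}, u_\varepsilon^{(2)}$ both satisfy \eqref{4-6-1} and both yield the same $\Lambda^k\in S_k$ in the limit. Setting $\xi_\varepsilon:=\bigl(u_\varepsilon^{(1)}-u_\varepsilon^{(2)}\bigr)/\|u_\varepsilon^{(1)}-u_\varepsilon^{(2)}\|_{L^\infty(\Omega)}$, one obtains $\|\xi_\varepsilon\|_\infty=1$ together with a linear elliptic equation $-\Delta\xi_\varepsilon=V_\varepsilon\xi_\varepsilon+\varepsilon\xi_\varepsilon$, where $V_\varepsilon$ interpolates $(u_\varepsilon^{(1)})^{4/(N-2)}$ and $(u_\varepsilon^{(2)})^{4/(N-2)}$. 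Rescaling near each $x_{j,\varepsilon}^{(1)}$ via $\tilde\xi_{j,\varepsilon}(y)=\xi_\varepsilon\bigl(x_{j,\varepsilon}^{(1)}+y/\lambda_{j,\varepsilon}^{(1)}\bigr)$ and extracting a subsequence, $\tilde\xi_{j,\varepsilon}$ converges in $C^1_{\mathrm{loc}}$ to a bounded solution of $-\Delta\tilde\xi=\tfrac{N+2}{N-2}U_{0,1}^{4/(N-2)}\tilde\xi$, which by the classification of the linearised bubble kernel has the form $\tilde\xi_j=b_{j,0}\partial_\lambda U_{0,1}+\sum_{i=1}^N b_{j,i}\partial_{x_i}U_{0,1}$. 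Away from the $a_j$, $\xi_\varepsilon$ converges to a harmonic function on $\Omega\setminus\{a_1,\dots,a_k\}$ vanishing on $\partial\Omega$, with singular behaviour at each $a_j$ determined by $b_{j,0}$ via the Green's function $G$.

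To force every $b_{j,\ell}=0$, I would subtract local Pohozaev identities for $u_\varepsilon^{(1)}$ and $u_\varepsilon^{(2)}$ on small balls $B_\delta(x_{j,\varepsilon}^{(1)})$, testing against $\partial_{x_\ell}$ (for $\ell=1,\dots,N$) and against $(x-x_{j,\varepsilon}^{(1)})\cdot\nabla+\tfrac{N-2}{2}$. After dividing by $\|u_\varepsilon^{(1)}-u_\varepsilon^{(2)}\|_\infty$ and expanding to leading order in $\varepsilon$ and $\lambda_{j,\varepsilon}^{-1}$, these identities become a linear system in $(b_{j,0},b_{j,i})_{j,i}$ whose coefficient matrix is, up to a nonzero scalar, the Hessian $D^2\Psi_k(a^k,\Lambda^k)$. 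The nondegeneracy assumption then forces every $b_{j,\ell}$ to vanish, hence $\xi_\varepsilon\to 0$ uniformly, contradicting $\|\xi_\varepsilon\|_\infty=1$. The main obstacle is exactly this last step: tracking the asymptotic expansions of $PU_{x,\lambda}$, of $w_\varepsilon$, and of the boundary integrals in the Pohozaev identities with enough precision that the leading-order coefficient matrix is unambiguously $D^2\Psi_k$ and not polluted by lower-order terms.
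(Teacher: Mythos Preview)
Your outline follows the paper's strategy closely: existence for each $\Lambda^k\in S_k$ comes from \cite{Musso1}, Theorem~\ref{th1-1} attaches every blow-up solution to some $\Lambda^k\in S_k$, and local uniqueness is obtained by the contradiction argument on $\xi_\varepsilon$, blow-up to the linearised bubble kernel, and local Pohozaev identities to kill the kernel coefficients.

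One substantive point deserves correction. You write that the Pohozaev identities reduce to a linear system whose coefficient matrix is, up to scalar, $D^2\Psi_k(a^k,\Lambda^k)$, so that nondegeneracy alone kills all $b_{j,\ell}$. This is not what happens in the paper, and the discrepancy is exactly where the restriction $N\ge 6$ enters. The elimination proceeds in two stages. The scaling Pohozaev identity first produces a $k\times k$ system $\overline{M}_{k,\varepsilon}\,(c_{1,0},\dots,c_{k,0})^T+(\text{terms in }B_{\varepsilon,j,h})=o(\bar\lambda_\varepsilon^{-(N-1)})$, where $\overline{M}_{k,\varepsilon}$ is \emph{not} a block of the Hessian of $\Psi_k$ but a matrix built from $R(x_j)$ and $G(x_j,x_l)$ with dimension-dependent weights; it is strictly diagonally dominant precisely when $N\ge 6$, and this---not nondegeneracy---is what forces $c_{j,0}=0$. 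Only afterwards, feeding $c_{j,0}=0$ back into both Pohozaev identities, does one obtain the system $D^2_{(x,\lambda),x}\Psi_k(a^k,\Lambda^k)\,\widetilde{B}_{\varepsilon,k}=o(\bar\lambda_\varepsilon^{-(N-1)})$, and here the nondegeneracy assumption (as a rank-$Nk$ condition on $D^2_{(x,\lambda),x}\Psi_k$) eliminates the spatial coefficients $c_{j,i}$.

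You should also be aware that obtaining the clean blow-up limit $\tilde\xi_j=\sum_\ell b_{j,\ell}\psi_\ell$ requires preliminary quantitative control on $|x_{j,\varepsilon}^{(1)}-x_{j,\varepsilon}^{(2)}|$, $|\lambda_{j,\varepsilon}^{(1)}-\lambda_{j,\varepsilon}^{(2)}|$, and $\|w_\varepsilon^{(1)}-w_\varepsilon^{(2)}\|$; the paper devotes a full section to these estimates before the Pohozaev step can be carried out.
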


In Theorem \ref{th1.1}, the existence and  non-degeneracy of  critical points to $\Psi_{k}$ play a crucial role. In fact,
the existence of critical points to $\Psi_{k}$ and  their  non-degeneracy are very important topics.
Musso and Pistoia  \cite{Musso1}  constructed a class of $\Omega_{\delta}$ for small $\delta$
and proved
the existence of stable critical points of $\Psi_{k}$ on $(\Omega_\delta)^k\times (\R^+)^k$ for some domain $\Omega_\delta$.
Specially, let $\Omega_0=\bigcup^k_{i=1} \Omega_i$, where $\Omega_1,\cdots,\Omega_k$ are $k$ smooth bounded domains such that $\Omega_i\bigcap \Omega_j=\emptyset$ if $i\neq j$. Then the function $\Psi_{k}$ has a strict minimum point in the connected component $\Omega_1\times \cdots \times \Omega_k\times(\R^+)^k$ of the set $(\Omega_0)^k\times (\R^+)^k$.
Moreover, assume that
$$\Omega_i\subset \big\{(x_1,x')\in \R\times \R^{N-1}|~a_i\leq x_1\leq b_i\big\}, ~\mbox{with}~ b_i<a_{i+1},~i=1,\cdots,k.$$
 For any $\delta>0$, let $C_\delta=\big\{(x_1,x')\in \R\times \R^{N-1}|~x_1\in (a_1,b_k),|x'|\leq \delta\big\}$
 and $\Omega_\delta$ be a smooth connected domain such that $\Omega_0\subset \Omega_\delta\subset \Omega_0\bigcup C_\delta$. Then if $\delta$ is small enough, the function $\Psi_{k}$ has a strict minimum point on $(\Omega_\delta)^k\times (\R^+)^k$, which is stable.

Very recently, Bartsch, Micheletti and Pistoia \cite{Bartsch1} proved that all critical points of $\Psi_{k}$ are non-degenerate  for most domains. Specially, for a bounded domain $\Omega\subset \R^N$ of class $C^{m+2,\alpha}, m\geq 0, 0<\alpha<1$, $\psi\in C^{m+2,\alpha}$, the set
\begin{equation*}
\Omega_\psi:=(id+\psi)(\Omega)=\big\{x+\psi(x):x\in \Omega\big\}
\end{equation*}
is again a bounded domain of class $C^{m+2,\alpha}$ provided $\|\psi\|_{C^1}<\rho(\Omega)$ is small. Setting
\begin{equation*}
\mathcal{B}^{m+2,\alpha}(\Omega):=\big\{\psi\in C^{m+2,\alpha}(\bar\Omega,\R^N):~\|\psi\|_{C^1}<\rho(\Omega)\big\},
\end{equation*}
then the set
$$\mathcal{M}^{m+2,\alpha}(\Omega):=\big\{\psi\in
\mathcal{B}^{m+2,\alpha}(\Omega):~\mbox{all critical points of $\Psi_{k}$ are non-degenerate on}~(\Omega_\psi)^k\times (\R^+)^k\big\}$$
is a dense subset of $\mathcal{B}^{m+2,\alpha}(\Omega)$.

The above results give us that   there exist some domains $\Omega$ such that  $\Psi_{k}$ possesses some critical points and all these  critical points are non-degenerate on $(\Omega)^k\times (\R^+)^k$.
We  can also refer to \cite{Bartsch,Micheletti} and  the references
therein.

\smallskip

Furthermore, to obtain the exact number of solutions to  \eqref{1.1}, we need to impose some assumption on the domain $\Omega$.
The following one will be used later.

\smallskip

\noindent\textbf{Assumption A:}\emph{ The problem
\begin{equation}\label{4-18-21}
\begin{cases}
-\Delta u= u^{\frac{N+2}{N-2}},~u>0,  &{\text{in}~\Omega},\\
u=0, &{\text{on}~\partial \Omega},
\end{cases}
\end{equation}
has no solutions.}

\smallskip

It follows from \cite{Cerqueti,Li} and Theorem~\ref{th1-1}  that all blow-up points of \eqref{1.1} are simple and isolated.
Also from the well-known  results in \cite{Bahri2}, we find that  the number of the  blow-up points to \eqref{1.1} are finite. Now we denote  the largest number of blow-up points by $k_0$ and define
$$T_k= \Big\{(a^k,\Lambda^k)=(a_1,\cdots,a_k,\lambda_1,\cdots,\lambda_k),
\nabla_x \Psi_k(a^k,\Lambda^k)=0,~\nabla_\lambda \Psi_k(a^k,\Lambda^k)=0\Big\}.$$
Then  the following result confirms the number of solutions to problem \eqref{1.1}.
\begin{Thm}\label{th1.2}
Let $N\geq 6$. For any integer $k\in [1, k_0]$, suppose that $M_k(a^k)$ is a positive matrix,
$(a^k,\Lambda^k)$ is a nondegenerate critical point of $\Psi_{k}$
for any $(a^k,\Lambda^k)\in T_k$  and the domain $\Omega$ satisfies \textbf{Assumption A}. Then for $\varepsilon>0$ sufficiently small,
$$
\mbox{the number of solutions to \eqref{1.1}}~~=\displaystyle\sum^{k_0}_{k=1}\sharp T_k,$$
where $\sharp T_k$ is the number of the elements in the set $T_k$.\end{Thm}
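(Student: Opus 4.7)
The plan is to combine the local counting result of Theorem \ref{th1.1} with a global classification argument that, under \textbf{Assumption A}, forces \emph{every} positive solution of \eqref{1.1} to blow up in the sense \eqref{4-6-1} as $\varepsilon\to 0$. Once the classification is in place, the theorem becomes a stratify-and-sum statement.

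\emph{Step 1: rule out non-blowing-up solutions.} Let $u_{\varepsilon_n}$ be any sequence of positive solutions of \eqref{1.1} with $\varepsilon_n\to 0$. First I would establish a uniform lower bound $\|u_{\varepsilon_n}\|\geq c_0>0$: testing the equation against $u_{\varepsilon_n}$ and applying the Sobolev inequality gives $\|u_{\varepsilon_n}\|^2\leq C\|u_{\varepsilon_n}\|^{2^*}+\varepsilon_n C\|u_{\varepsilon_n}\|^2$, so small norm is impossible once $\varepsilon_n$ is small. If $\|u_{\varepsilon_n}\|_{L^\infty}$ stayed bounded, standard elliptic bootstrap would produce, along a subsequence, a $C^2$-limit $u_0$ which is a nontrivial nonnegative solution of the critical problem \eqref{4-18-21}; the strong maximum principle gives $u_0>0$, contradicting \textbf{Assumption A}. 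Consequently $\|u_{\varepsilon_n}\|_{L^\infty}\to\infty$.

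\emph{Step 2: classify the blow-up via Theorem \ref{th1-1}.} Given blow-up, \cite{Bahri2} bounds the number of blow-up points by a finite constant, and \cite{Cerqueti,Li} together with Theorem \ref{th1-1} guarantee that all blow-up points are isolated and simple, so \eqref{4-6-1} holds with some integer $k\in[1,k_0]$ and some tuple $a^k=(a_1,\ldots,a_k)\in\Omega^k$. Theorem \ref{th1-1} then provides rescaled concentration parameters $\Lambda^k=(\lambda_1,\ldots,\lambda_k)\in(\R^+)^k$ such that $\nabla_x\Psi_k(a^k,\Lambda^k)=\nabla_\lambda\Psi_k(a^k,\Lambda^k)=0$, i.e.\ $(a^k,\Lambda^k)\in T_k$. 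Thus every positive solution of \eqref{1.1} is canonically labelled by a pair $\bigl(k,(a^k,\Lambda^k)\bigr)$ with $k\in[1,k_0]$ and $(a^k,\Lambda^k)\in T_k$.

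\emph{Step 3: assemble the count via Theorem \ref{th1.1}.} For each admissible $a^k$ appearing as the first coordinate of some element of $T_k$, the hypothesis that $M_k(a^k)$ is positive and $(a^k,\Lambda^k)$ is a nondegenerate critical point of $\Psi_k$ for every $\Lambda^k\in S_k$ is exactly what Theorem \ref{th1.1} requires: it produces, for all sufficiently small $\varepsilon$, precisely $\sharp S_k$ distinct solutions of \eqref{1.1} concentrating at $a^k$, and these exhaust the fibre. Since by definition $T_k=\bigsqcup_{a^k}\{a^k\}\times S_k$, the stratum indexed by $k$ contributes exactly $\sharp T_k$ solutions, and solutions from different strata are distinct through their limiting profiles. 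Summing over $k\in[1,k_0]$ yields the claimed total $\sum_{k=1}^{k_0}\sharp T_k$.

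The main obstacle is Step 1: pushing \textbf{Assumption A} through to exclude bounded limits requires care because the critical exponent makes $H^1_0$-compactness delicate, although the $\varepsilon_n u_{\varepsilon_n}$ term is a benign lower-order perturbation that does not affect the elliptic bootstrap once an $L^\infty$ bound is in hand. Steps 2 and 3 are then essentially a bookkeeping exercise once Theorems \ref{th1-1} and \ref{th1.1} are available; the only subtlety is the upper bound $k\leq k_0$ on the number of blow-up points, which is built into the definition of $k_0$ via \cite{Bahri2}.
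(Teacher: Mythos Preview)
Your strategy matches the paper's almost exactly: use \textbf{Assumption A} to force blow-up, invoke the structure theorem (Theorem~\ref{th1-1}) to land in some $T_k$, then apply Theorem~\ref{th1.1} fibre by fibre and sum. The bookkeeping in Step~3 is correct.

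There is, however, one genuine gap in the transition from Step~1 to Step~2. Establishing $\|u_{\varepsilon_n}\|_{L^\infty}\to\infty$ does \emph{not} by itself yield the pure concentration profile \eqref{4-6-1}. A blowing-up sequence could in principle carry a nontrivial weak $H^1_0$-limit $u_0$ in addition to the bubbles, in which case $|\nabla u_{\varepsilon_n}|^2\rightharpoonup |\nabla u_0|^2\,dx + S^{N/2}\sum_j\delta_{a_j}$ rather than \eqref{4-6-1}, and Theorem~\ref{th1-1} would not apply. The paper closes this gap by first obtaining a uniform energy bound from \cite{Cerqueti}, then invoking Struwe's global compactness \cite{Struwe} to write $u_\varepsilon=u_0+\sum_j PU_{x_{j,\varepsilon},\lambda_{j,\varepsilon}}+w_\varepsilon$ with $u_0$ a nonnegative solution of \eqref{4-18-21}, and finally using the maximum principle together with \textbf{Assumption A} a \emph{second} time to force $u_0=0$. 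Only then does \eqref{4-6-1} follow. Your Step~1 uses \textbf{Assumption A} once (to rule out bounded sequences), but you need it again to kill the residual profile in the Struwe decomposition; the references \cite{Cerqueti,Li,Bahri2} you cite in Step~2 do not by themselves eliminate $u_0$.
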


\begin{Rem}
From the above statements after Theorem \ref{th1.1}(see also \cite{Bartsch1, Musso1}), we know that there are some non-convex domains such that $\Psi_{k}$ admits some critical points and all critical points of $\Psi_{k}$ are non-degenerate.   A special example on which the function $\Psi_{k}$ has at least two critical points is as follows. Let $\Omega_0=\bigcup^{k+1}_{i=1} \Omega_i$, where $\Omega_1,\cdots,\Omega_{k+1}$ are $k+1$ smooth bounded domains such that $dist\{\Omega_i,\Omega_j\}$ is large if $i\neq j$,
then  the function $\Psi_{k}$ has two strict minimum points in $\Omega_1\times \cdots \times \Omega_k\times(\R^+)^k$ and $\Omega_1\times \cdots \times \Omega_{k-1}\times \Omega_{k+1}\times(\R^+)^k$ correspondingly.
Moreover, assume that
$$\Omega_i\subset \big\{(x_1,x')\in \R\times \R^{N-1}|~a_i\leq x_1\leq b_i\big\}, ~\mbox{with}~ b_i<a_{i+1},~i=1,\cdots,k+1.$$
 For any $\delta>0$, let $C_\delta=\big\{(x_1,x')\in \R\times \R^{N-1}|~x_1\in (a_1,b_{k+1}),|x'|\leq \delta\big\}$
 and $\Omega_\delta$ be a smooth connected domain such that $\Omega_0\subset \Omega_\delta\subset \Omega_0\bigcup C_\delta$. Then if $\delta$ is small enough, the function $\Psi_{k}$ has at least two strict minimum points on $(\Omega_\delta)^k\times (\R^+)^k$, which are non-degenerate.
Hence problem
\eqref{1.1} admits at least two solutions concentrated at $k$ points for above domain $\Omega_\delta$.
And Theorem \ref{th1.1} gives us the description on the exact number of solutions concentrated at $k$ points for above domain $\Omega_\delta$.

On the other hand, it is known in \cite{Pohozaev}  that
\textup{Assumption A} is satisfied for a star-shaped domain.
And from \cite{Carpio}, we can also find some non star-shaped domains on which \textup{Assumption A} holds. However whether there exists a non-convex domain such that \textup{Assumption A} holds and the function $\Psi_{k}$ admits  non-degenerate critical points simultaneously seems to be interesting and difficult. Since the function $\Psi_{k}$ will depend on Green's function on $\Omega$ and
we know little information on Green's function. The properties of the critical points of $\Psi_{k}$
will be a substantive and important project.
A known example concerning that $\Psi_{k}$ admits some non-degenerate critical points
is above domain constructed in  \cite{Musso1}. But it seems to be not easy to determine
whether \textup{Assumption A} holds in this case.

Here we would like to point out that without \textup{Assumption A}, we can relax the result in Theorem \ref{th1.2} into
``the number of concentrated solutions to \eqref{1.1} $= \sum^{k_0}_{k=1}\sharp T_k$".

\end{Rem}

Whether or not theorems~\ref{th1.1} and \ref{th1.2} are true for $N=5$ are not clear due to our methods, which can be found in Remark \ref{Rem-luo1}  below for more details.
To prove our main results, the crucial step is to prove a local uniqueness result of blow-up solutions. To this end,
a widely used method  is
to reduce into finite dimensional problems and count
the local degree. We refer to \cite{Cao1,Glangetas} for examples. However, for the multi-peak solution of \eqref{1.1}, it is extremely complicated  to calculate the corresponding degree.
Here inspired by \cite{Deng,GPY},  our proofs mainly depend on the local Pohozaev type identities:
\begin{equation}\label{clp-1}
-\int_{\partial \Omega'}\frac{\partial u_\varepsilon}{\partial \nu}\frac{\partial u_\varepsilon}{\partial x_i}
+\frac{1}{2}\int_{\partial \Omega'}|\nabla u_\varepsilon|^2\nu_i=\frac{N-2}{2N}\int_{\partial \Omega'}  u_\varepsilon^{\frac{2N}{N-2}}\nu_i+ \frac{\varepsilon}{2}\int_{\partial \Omega'} u_\varepsilon^2\nu_i,
\end{equation}
and
\begin{equation}\label{clp-10}
\begin{split}
-&\int_{\partial \Omega'}\frac{\partial u_{\varepsilon}}{\partial\nu}
\big\langle x-x_{j,\varepsilon},\nabla u_{\varepsilon}\big\rangle
+\frac{1}{2}\int_{\partial \Omega'}
|\nabla  u_{\varepsilon}|^2
\big\langle x-x_{j,\varepsilon},\nu\big\rangle
+\frac{2-N}{2}\int_{\partial \Omega'}\frac{\partial u_{\varepsilon}}{\partial\nu}
  u_{\varepsilon} \\&=
  \frac{N-2}{2N}\int_{\partial \Omega'} u^{\frac{2N}{N-2}}_\varepsilon \big\langle x-x_{j,\varepsilon},\nu\big\rangle +\frac{\varepsilon}{2
  }\int_{\partial \Omega'}u_{\varepsilon}^{2}\big\langle x-x_{j,\varepsilon},\nu\big\rangle-\varepsilon\int_{ \Omega'} u^2_\varepsilon,
\end{split}
\end{equation}
where $\Omega'\subset \Omega$ is a smooth domain and $\nu(x)=\big(\nu_{1}(x),\cdots,\nu_N(x)\big)$ is the outward unit normal of $\partial \Omega'$. The local Pohozaev identities \eqref{clp-1} and \eqref{clp-10} can be deduced by  multiplying $\frac{\partial u_\varepsilon}{\partial x_i}$  and $\langle x-x_{j,\varepsilon},\nabla u_{\varepsilon}\rangle$ on both sides of \eqref{1.1} and integrating on $\Omega'$ respectively.
With the absence of potential function in \eqref{1.1},
 only surface integrals appear in the local  Pohozaev identities \eqref{clp-1} and \eqref{clp-10}. So we need to
study carefully each surface integral to determine which one dominates all the others.
The concentrated points of \eqref{1.1} depend on Green's function of $\Omega$, which causes new difficulties in the estimates of each term in local Pohozaev identities. Here inspired by  \cite{Cao}, we establish some new entire estimates to overcome these difficulties caused by Green's function. Last but not least,  since any solution of \eqref{1.1}  with \eqref{4-6-1} decays algebraically, we need to estimate the order of each terms in the local Pohozaev identities precisely. Here we also point out that
the interaction between the bumps
must be taken into careful consideration.

This paper is organized as follows.  In Section \ref{s2}, we establish some basic estimates of the solutions with concentration and give the proof of Theorem \ref{th1-1}.
In Section \ref{s3}, we estimate the regularization of difference between  two solutions. Then combining these calculations and the local Pohozaev identities, we prove
Theorem \ref{th1.1} and Theorem \ref{th1.2} in Section \ref{s4}. In Section \ref{s5}, we give the proofs of some crucial estimates involving the Green's function.
In order that we can give a clear line of our framework, we list some basic  estimates
and calculations in  Appendix \ref{App-A}.

Throughout our paper, we use the same $C$ to denote various generic positive constants independent of $\varepsilon$. We will use $\partial$ or $\nabla$ to denote the partial derivative for any function $h(y,x)$ with respect to $y$, while we will use $D$ to denote the partial derivative for any function $h(y,x)$ with respect to $x$.

\section{Some estimates on blow-up solutions and Proof of Theorem \ref{th1-1}}\label{s2}
\setcounter{equation}{0}
In this section, we obtain some basic estimates for solutions of  \eqref{1.1} satisfying
\eqref{4-6-1}. These estimates are crucial for discussions in next sections. We start with
the following decomposition result concerning with solutions of  \eqref{1.1}.
\begin{Prop}
Let $N\geq 5$. Suppose that  $u_{\varepsilon}(x)$ is a solution of \eqref{1.1} satisfying
\eqref{4-6-1}. Then  $u_{\varepsilon}$ can be written as
\begin{equation}\label{4-18-11}
u_{\varepsilon}=\sum^k_{j=1} PU_{x_{j,\varepsilon}, \lambda_{j,\varepsilon}}+w_{\varepsilon},
\end{equation}
satisfying, for $j=1,\cdots,k$,
 $\lambda_{j,\varepsilon}=
\Big(u_\varepsilon(x_{j,\varepsilon})\Big)^{\frac{2}{N-2}}$,
\begin{equation}\label{4-18-12}
 x_{j,\varepsilon}\rightarrow a_j, ~\lambda_{j,\varepsilon}\rightarrow +\infty,~ \|w_{\varepsilon}\|=o(1)~\mbox{and}~w_\varepsilon\in \bigcap^k_{j=1}E_{x_{j,\varepsilon},\lambda_{j,\varepsilon}}.
\end{equation}
\end{Prop}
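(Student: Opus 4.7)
The plan is to produce the decomposition in two stages: first extract preliminary concentration data $(\tilde x_{j,\varepsilon},\tilde\lambda_{j,\varepsilon})$ directly from \eqref{4-6-1}, and then correct them via an implicit-function-theorem argument to obtain the orthogonality $w_{\varepsilon}\in\bigcap_{j=1}^{k}E_{x_{j,\varepsilon},\lambda_{j,\varepsilon}}$.

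For the first stage, \eqref{4-6-1} together with elliptic regularity forces $u_{\varepsilon}\to 0$ uniformly on compact subsets of $\overline{\Omega}\setminus\{a_{1},\ldots,a_{k}\}$, while $\max_{B_{r}(a_{j})}u_{\varepsilon}\to+\infty$ for every small $r>0$ and every $j$. Choose a maximum point $\tilde x_{j,\varepsilon}\in B_{r}(a_{j})$ of $u_{\varepsilon}$ and set $\tilde\lambda_{j,\varepsilon}:=u_{\varepsilon}(\tilde x_{j,\varepsilon})^{2/(N-2)}$. The rescaling $\tilde v_{j}(y):=\tilde\lambda_{j,\varepsilon}^{-(N-2)/2}u_{\varepsilon}\bigl(\tilde x_{j,\varepsilon}+\tilde\lambda_{j,\varepsilon}^{-1}y\bigr)$ then solves a critical equation converging to $-\Delta v=v^{(N+2)/(N-2)}$ on $\R^{N}$; by Caffarelli-Gidas-Spruck classification $\tilde v_{j}\to U_{0,1}$ in $C^{2}_{\mathrm{loc}}(\R^{N})$, hence $\tilde\lambda_{j,\varepsilon}\to+\infty$ and $\tilde x_{j,\varepsilon}\to a_{j}$. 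Combined with $\int|\nabla u_{\varepsilon}|^{2}\to kS^{N/2}$ and the classical asymptotics $PU_{x,\lambda}=U_{x,\lambda}-C_{N}\lambda^{-(N-2)/2}H(x,\cdot)+\text{l.o.t.}$, a routine energy expansion yields $\|u_{\varepsilon}-\sum_{j}PU_{\tilde x_{j,\varepsilon},\tilde\lambda_{j,\varepsilon}}\|=o(1)$.

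For the second stage I would define, in a neighborhood of the initial choice, the finite-dimensional map $F(x,\lambda)$ whose components are the $k(N+1)$ inner products $\bigl\langle u_{\varepsilon}-\sum_{j}PU_{x_{j},\lambda_{j}},\partial_{x_{j,i}}PU_{x_{j},\lambda_{j}}\bigr\rangle$ and $\bigl\langle u_{\varepsilon}-\sum_{j}PU_{x_{j},\lambda_{j}},\partial_{\lambda_{j}}PU_{x_{j},\lambda_{j}}\bigr\rangle$. After rescaling components by the appropriate powers of $\lambda_{j}$, the differential $DF$ at the initial data is a small perturbation of a block-diagonal matrix whose $j$-th block is the Gram matrix of $\{\partial_{x_{i}}U_{0,1},\partial_{\lambda}U_{0,1}\}$ in $H^{1}(\R^{N})$; the latter is invertible by a classical computation. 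The implicit function theorem then gives corrected parameters $(x_{j,\varepsilon},\lambda_{j,\varepsilon})$ with $F\equiv 0$, and since the correction is of lower order in the natural rescaled coordinates, $x_{j,\varepsilon}\to a_{j}$, $\lambda_{j,\varepsilon}\to+\infty$ and $\|w_{\varepsilon}\|=o(1)$ are all preserved.

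The most delicate point is reconciling the variational choice of parameters with the pointwise prescription $\lambda_{j,\varepsilon}=u_{\varepsilon}(x_{j,\varepsilon})^{2/(N-2)}$, which is not automatic from $F\equiv 0$. I would address this by first showing $\tilde\lambda_{j,\varepsilon}^{(N-2)/2}/u_{\varepsilon}(x_{j,\varepsilon})\to 1$ from the $C^{2}_{\mathrm{loc}}$-convergence of the rescaled profile and the smallness of $w_{\varepsilon}$, and then replacing $\tilde\lambda_{j,\varepsilon}$ by exactly $u_{\varepsilon}(x_{j,\varepsilon})^{2/(N-2)}$, absorbing the resulting $o(\lambda_{j,\varepsilon})$ discrepancy into $w_{\varepsilon}$. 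Verifying that this replacement does not break the orthogonality to leading order, and tracking the error quantitatively so that $\|w_{\varepsilon}\|=o(1)$ survives, is the principal technical burden of the proof.
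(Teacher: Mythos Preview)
Your two-stage strategy---blow-up analysis at local maxima followed by an implicit-function-theorem adjustment to enforce the orthogonality---is precisely the paper's approach, which phrases the second stage tersely as ``move $x_{j,\varepsilon}$ a bit.'' The tension you flag between exact orthogonality and the exact pointwise prescription $\lambda_{j,\varepsilon}=u_{\varepsilon}(x_{j,\varepsilon})^{2/(N-2)}$ is real and is glossed over in the paper as well; in the sequel only the orthogonality is actually used, and the pointwise relation is needed only up to a factor $1+o(1)$, which survives the adjustment.
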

\begin{proof}
Since $u_{\varepsilon}(x)$ is a solution of \eqref{1.1} satisfying
\eqref{4-6-1}, we find that $u_{\varepsilon}(x)$ blows up at $a_1,\cdots,a_{k}$. Then there exist $x_{j,\varepsilon}\in \Omega$  for $j=1,\cdots,k$  satisfying
$$x_{j,\varepsilon}\rightarrow a_j~\mbox{and}~u_\varepsilon(x_{j,\varepsilon})\rightarrow +\infty.$$
Let $v_{1,\varepsilon}=\lambda_{1,\varepsilon}^{-(N-2)/2}
u_{\varepsilon}\big(\frac{x}{\lambda_{1,\varepsilon}}+x_{1,\varepsilon}\big)$,
then
\begin{equation*}
-\Delta v_{1,\varepsilon} =v_{1,\varepsilon}^{2^*-1}
+\frac{\varepsilon}{\lambda_{1,\varepsilon}^2}v_{1,\varepsilon},
~\mbox{in}~\R^N.
\end{equation*}
For any fixed small $d$, $\displaystyle\max_{B_{d\lambda_{1,\varepsilon}}
(0)}v_{1,\varepsilon}=1,$
which means that
\begin{equation*}
u_{\varepsilon}=
PU_{x_{1,\varepsilon}, \lambda_{1,\varepsilon}}+u_{1,\varepsilon},~\mbox{with}~\int_{B_d(x_{1,\varepsilon})}\big[|\nabla u_{1,\varepsilon}|^2+u^2_{1,\varepsilon}\big]=o(1).
\end{equation*}
Repeating the above process and setting  $
w_{\varepsilon}(x):=u_{\varepsilon}-\displaystyle\sum^k_{j=1} PU_{x_{j,\varepsilon}, \lambda_{j,\varepsilon}}$,
 we get
 $$\int_{\bigcup^k_{j=1}
B_d(x_{j,\varepsilon})}\big(|\nabla w_{\varepsilon}|^2+w_{\varepsilon}^2\big)=o(1).
$$
This and \eqref{4-6-1} imply $\|w_{\varepsilon}\|=o(1)$. Then we find
\begin{equation*}
 \Big\langle \frac{\partial PU_{x_{j,\varepsilon},\lambda_{j,\varepsilon}}}{\partial \lambda},w_{\varepsilon}\Big\rangle=\Big\langle \frac{\partial PU_{x_{j,\varepsilon},\lambda_{j,\varepsilon}}}{\partial x_i},w_{\varepsilon}\Big\rangle=o(1).
\end{equation*}
Now we can move $x_{j,\varepsilon}$ a bit(still denoted by $x_{j,\varepsilon}$), so that the error term $w_\varepsilon\in \displaystyle\bigcap^k_{j=1}E_{x_{j,\varepsilon},\lambda_{j,\varepsilon}}$.
\end{proof}

\begin{Prop}\label{prop2.1}
Let $u_{\varepsilon}$ be  a  solution of \eqref{1.1} with \eqref{4-6-1}, then for any small fixed  $d>0$, it holds
\begin{equation}\label{clp-2}
u_{\varepsilon}(x)=A\Big(\sum^k_{j=1}\frac{G(x_{j,\varepsilon},x)}{
(\lambda_{j,\varepsilon})^{(N-2)/2}} \Big)
+O\Big(\frac{1}{\lambda_\varepsilon^{(N+2)/2}}
+\frac{\varepsilon}{\lambda_\varepsilon^{(N-2)/2}}\Big)
+o\Big(\frac{\varepsilon}{\lambda_\varepsilon^{2}}\Big),~\mbox{in}~  C^1
\Big(\Omega\backslash\bigcup^k_{j=1}B_{2d}(x_{j,\varepsilon})\Big),
\end{equation}
where $A$ is the constant in \eqref{a1} and $
  \lambda_\varepsilon:
  =\min\big\{\lambda_{1,\varepsilon},\cdots,\lambda_{k,\varepsilon}\big\}$.
\end{Prop}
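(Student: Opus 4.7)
The plan is to start from the Green representation, which for any solution of \eqref{1.1} reads
$$
u_\varepsilon(x)=\int_\Omega G(x,y)\bigl[u_\varepsilon(y)^{(N+2)/(N-2)}+\varepsilon u_\varepsilon(y)\bigr]\,dy,\qquad x\in\Omega.
$$
Since the statement restricts $x$ to $\Omega\setminus\bigcup_j B_{2d}(x_{j,\varepsilon})$, both $G(x,\cdot)$ and $\nabla_xG(x,\cdot)$ are smooth and uniformly bounded on each ball $B_d(x_{j,\varepsilon})$, so the $C^1$ conclusion reduces to the $C^0$ one with $G$ replaced by $\nabla_xG$. I therefore focus on the $C^0$ estimate.

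Using the decomposition \eqref{4-18-11}, I split the nonlinear integral into a near-piece $\sum_j\int_{B_d(x_{j,\varepsilon})}$ and a far-piece $\int_{\Omega\setminus\bigcup_jB_d(x_{j,\varepsilon})}$. On the far region, the bubble-tail bound $|PU_{x_{i,\varepsilon},\lambda_{i,\varepsilon}}(y)|\le C\lambda_{i,\varepsilon}^{-(N-2)/2}$ combined with an $L^\infty$ upgrade of $\|w_\varepsilon\|=o(1)$ by interior elliptic regularity yields $u_\varepsilon=O(\lambda_\varepsilon^{-(N-2)/2})$ pointwise there, so its $(N+2)/(N-2)$-th power weighted by the bounded kernel contributes an error absorbed in $O(\lambda_\varepsilon^{-(N+2)/2})$. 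On each near-ball I expand
$$
u_\varepsilon^{(N+2)/(N-2)}=U_{x_{j,\varepsilon},\lambda_{j,\varepsilon}}^{(N+2)/(N-2)}+R_j,
$$
where $R_j$ collects (i) the correction $(PU_{x_{j,\varepsilon},\lambda_{j,\varepsilon}})^{(N+2)/(N-2)}-U_{x_{j,\varepsilon},\lambda_{j,\varepsilon}}^{(N+2)/(N-2)}$, bounded using $|PU-U|=O(\lambda_\varepsilon^{-(N-2)/2})$; (ii) the bubble--bubble interactions of the form $U_j^{4/(N-2)}\sum_{i\neq j}PU_i$; and (iii) terms in $w_\varepsilon$, controlled by H\"older against $\|w_\varepsilon\|_{L^{2N/(N-2)}}=o(1)$. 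Each integrates against $G(x,\cdot)$ into at most an $O(\lambda_\varepsilon^{-(N+2)/2})$ contribution.

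The leading piece $\int_{B_d(x_{j,\varepsilon})}G(x,y)U_{x_{j,\varepsilon},\lambda_{j,\varepsilon}}^{(N+2)/(N-2)}(y)\,dy$ is handled by Taylor expanding $G(x,\cdot)$ at $x_{j,\varepsilon}$: the first-order moment vanishes by the radial symmetry of the bubble about $x_{j,\varepsilon}$, and rescaling gives
$$
\int_{B_d(x_{j,\varepsilon})}U_{x_{j,\varepsilon},\lambda_{j,\varepsilon}}^{(N+2)/(N-2)}\,dy=A\,\lambda_{j,\varepsilon}^{-(N-2)/2}+O\bigl(\lambda_{j,\varepsilon}^{-(N+2)/2}\bigr),
$$
producing the announced main term $A\sum_jG(x_{j,\varepsilon},x)\lambda_{j,\varepsilon}^{-(N-2)/2}$ up to the stated error. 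The linear part $\varepsilon\int_\Omega G(x,y)u_\varepsilon(y)\,dy$ is treated analogously: splitting near/far and using $\int_{B_d}PU_{x_{j,\varepsilon},\lambda_{j,\varepsilon}}\,dy=O(\lambda_{j,\varepsilon}^{-(N-2)/2})$ on the near-balls (obtained from a borderline-divergent rescaling of $\int U_{0,1}$ on a bounded region) gives the $O(\varepsilon\lambda_\varepsilon^{-(N-2)/2})$ contribution; the sharper $o(\varepsilon/\lambda_\varepsilon^2)$ remainder captures a sub-leading correction extracted by separating the concentrated mass of $u_\varepsilon$ from its $L^2$-controlled tail.

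The main obstacle is keeping the error bookkeeping at the stated precision, in particular verifying (a) that the bubble--bubble interaction contribution really integrates to $O(\lambda_\varepsilon^{-(N+2)/2})$ and not larger, and (b) that the linear correction yields a genuine $o(\varepsilon/\lambda_\varepsilon^2)$ rather than a bare $O$. The delicate Green-weighted bubble identities proved separately in Section~\ref{s5} supply the precision needed for (a)--(b), and the $C^1$ assertion follows by applying the identical scheme to $\nabla_xG(x,\cdot)$ in place of $G(x,\cdot)$, which keeps every estimate intact because $x$ stays a positive distance from both the concentration set and $\partial\Omega$.
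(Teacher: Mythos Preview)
Your approach matches the paper's: Green representation, near/far splitting, Taylor expansion of $G(x,\cdot)$ about each concentration point, and then the same scheme for $\nabla_x G$. Two points need correction.

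First, stopping the Taylor expansion at first order is not quite enough. The second moment $\int_{B_d(x_{j,\varepsilon})}|y-x_{j,\varepsilon}|^2\,U_{x_{j,\varepsilon},\lambda_{j,\varepsilon}}^{(N+2)/(N-2)}\,dy$ is of order $\lambda_{j,\varepsilon}^{-(N+2)/2}\log\lambda_{j,\varepsilon}$, so a first-order remainder would introduce an unwanted logarithm. The paper expands to second order and then observes that the off-diagonal moments vanish by symmetry while the diagonal ones cancel because $\sum_i D^2_{x_ix_i}G(x_{j,\varepsilon},x)=0$ for $x\notin B_d(x_{j,\varepsilon})$ (harmonicity of $G$ away from its pole). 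Only the third-order remainder $\int|y-x_{j,\varepsilon}|^3\,u_\varepsilon^{(N+2)/(N-2)}$ is then bounded by $O(\lambda_\varepsilon^{-(N+2)/2})$.

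Second, your appeal to Section~\ref{s5} is misplaced: that section computes the bilinear boundary forms $P(\cdot,\cdot)$ and $Q(\cdot,\cdot)$ on Green's functions, which play no role here. The precision you need for the moment integrals, the far-region $L^\infty$ bound on $u_\varepsilon$, and the $o(\varepsilon/\lambda_\varepsilon^2)$ remainder (which comes from the $w_\varepsilon$ contribution via \eqref{lp21}) are all supplied by the estimates in Appendix~\ref{App-A}, specifically \eqref{A--1}--\eqref{lp21} and \eqref{at3}--\eqref{8-17-6}.
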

\begin{proof}
First  for $x\in \Omega\backslash\displaystyle\bigcup^k_{j=1}B_{2d}(x_{j,\varepsilon})$, we have
\begin{equation}\label{aaaat3}
\begin{split}
u_\varepsilon(x)=& \int_{\Omega}G(y,x)
\big(u_{\varepsilon}^{\frac{N+2}{N-2}}(y)+\varepsilon u_{\varepsilon} (y)\big) dy\\=&
\sum^k_{j=1}\int_{B_d(x_{j,\varepsilon})}G(y,x)
u_{\varepsilon}^{\frac{N+2}{N-2}}(y)dy+\int_{\Omega\backslash\bigcup^k_{j=1}B_{d}(x_{j,\varepsilon})}G(y,x)
u_{\varepsilon}^{\frac{N+2}{N-2}}(y)dy\\&
+ \varepsilon \sum^k_{j=1}\int_{B_d(x_{j,\varepsilon})}G(y,x)u_{\varepsilon} (y)dy+\varepsilon \int_{\Omega\backslash\bigcup^k_{j=1}B_{d}(x_{j,\varepsilon})}G(y,x)u_{\varepsilon} (y)dy.
\end{split}
\end{equation}
And by Taylor's expansion, we know
\begin{equation}\label{tian-1}
\begin{split}
\int_{B_d(x_{j,\varepsilon})} &G(y,x) u_{\varepsilon}^{\frac{N+2}{N-2}}(y) dy\\=&
  G(x_{j,\varepsilon},x)\int_{B_d(x_{j,\varepsilon})}
  u_{\varepsilon}^{\frac{N+2}{N-2}}+\sum^N_{i=1}D_{x_i} G(x_{j,\varepsilon},x)
\int_{B_d(x_{j,\varepsilon})}\big(y_i-x_{j,\varepsilon,i}\big)  u_{\varepsilon}^{\frac{N+2}{N-2}}(y) dy\\&
+\sum^N_{i=1}\sum^N_{m=1}D^2_{x_ix_m} G(x_{j,\varepsilon},x) \int_{B_d(x_{j,\varepsilon})}\big(y_i-x_{j,\varepsilon,i}\big)
\big(y_m-x_{j,\varepsilon,m}\big)  u_{\varepsilon}^{\frac{N+2}{N-2}}(y) dy\\&
+O\Big(\int_{B_d(x_{j,\varepsilon})} |y-x_{j,\varepsilon}|^3 u_{\varepsilon}^{\frac{N+2}{N-2}}(y) dy\Big).
\end{split}
\end{equation}
Also from the symmetry and  the fact that $\displaystyle\sum^N_{i=1}D^2_{x_ix_i} G(x_{j,\varepsilon},x) =0$ for $x\in \Omega\backslash B_d(x_{j,\varepsilon})$, we get
\begin{equation}\label{tian-3}
\begin{split}
\sum^N_{i=1}\sum^N_{m=1}D^2_{x_ix_m} G(x_{j,\varepsilon},x) \int_{B_d(x_{j,\varepsilon})}\big(y_i-x_{j,\varepsilon,i}\big)
\big(y_m-x_{j,\varepsilon,m}\big)  U^{\frac{N+2}{N-2}}_{x_{j,\varepsilon}, \lambda_{j,\varepsilon}}
 =0.
\end{split}
\end{equation}
Next for $x\in \Omega\backslash\displaystyle\bigcup^k_{j=1}B_{2d}(x_{j,\varepsilon})$, from \eqref{A--1}--\eqref{lp21}, it holds
\begin{equation}\label{at6}
\begin{split}
 \varepsilon\int_{B_d(x_{j,\varepsilon})}G(y,x) u_{\varepsilon} (y) dy &=O\Big(\varepsilon
 \int_{B_d(x_{j,\varepsilon})} u_{\varepsilon} (y) dy\Big)=O\Big(\frac{\varepsilon}{\lambda_\varepsilon^{(N-2)/2}}
 \Big)+o\Big(\frac{\varepsilon}{\lambda_\varepsilon^{2}}\Big).
\end{split}
\end{equation}
Then  \eqref{aaaat3}--\eqref{at6} and \eqref{at3}--\eqref{8-17-6} imply
\begin{equation*}
u_{\varepsilon}(x)=A\Big(\sum^k_{j=1}\frac{G(x_{j,\varepsilon},x)}{
(\lambda_{j,\varepsilon})^{(N-2)/2}} \Big)
+O\Big(\frac{1}{\lambda_\varepsilon^{(N+2)/2}}
+\frac{\varepsilon}{\lambda_\varepsilon^{(N-2)/2}}\Big)+o\Big(\frac{\varepsilon}{\lambda_\varepsilon^{2}}\Big),~\mbox{in}~
\Omega\backslash\bigcup^k_{j=1}B_{2d}(x_{j,\varepsilon}).
\end{equation*}

On the other hand,  from  \eqref{A--1}, for $x\in \Omega\backslash\displaystyle\bigcup^k_{j=1}B_{2d}(x_{j,\varepsilon})$, we have
\begin{equation}\label{at-3}
\begin{split}
\frac{\partial u_\varepsilon(x)}{\partial x_i}=& \int_{\Omega}D_{x_i}G(y,x)
\Big(u_{\varepsilon}^{\frac{N+2}{N-2}}(y)+\varepsilon u_{\varepsilon} (y)\Big) dy\\=&
\sum^k_{j=1}\int_{B_d(x_{j,\varepsilon})}D_{x_i}G(y,x)
\Big(u_{\varepsilon}^{\frac{N+2}{N-2}}(y)+\varepsilon u_{\varepsilon} (y)\Big) dy+O\Big(\frac{1}{\lambda^{(N+2)/2}_{\varepsilon}}\Big).
\end{split}
\end{equation}
Similar to the above estimates, for $x\in \Omega\backslash\displaystyle\bigcup^k_{j=1}B_{2d}(x_{j,\varepsilon})$
and $j=1,\cdots,k$, we can prove
\begin{equation}\label{at--3}
\begin{split}
 \int_{B_d(x_{j,\varepsilon})}D_{x_i}G(y,x)
\Big(u_{\varepsilon}^{\frac{N+2}{N-2}}(y)+\varepsilon u_{\varepsilon} (y)\Big) dy=
 \frac{A}{
(\lambda_{j,\varepsilon})^{(N-2)/2}} D_{x_i} G(x_{j,\varepsilon},x)+O\Big(\frac{1}{\lambda^{(N+2)/2}_{\varepsilon}}\Big).
\end{split}
\end{equation}
Then \eqref{at-3} and \eqref{at--3} imply
\begin{equation*}
\frac{\partial u_\varepsilon(x)}{\partial x_i}=A\Big(\sum^k_{j=1}\frac{D_{x_i}G(x_{j,\varepsilon},x)}{
(\lambda_{j,\varepsilon})^{(N-2)/2}} \Big)
+O\Big(\frac{1}{\lambda_\varepsilon^{(N+2)/2}}
+\frac{\varepsilon}{\lambda_\varepsilon^{(N-2)/2}}\Big)+o\Big(\frac{\varepsilon}{\lambda_\varepsilon^{2}}\Big),~\mbox{in}~
\Omega\backslash\bigcup^k_{j=1}B_{2d}(x_{j,\varepsilon}).
\end{equation*}
\end{proof}
\begin{Prop}\label{Prop-4-26-1}
Let $u_{\varepsilon}$ be  a  solution of \eqref{1.1} with \eqref{4-6-1},  then  $M_k(a^k)$ is a non-negative matrix.
Moreover if  $M_k(a^k)$ is a positive matrix, it holds
\begin{equation}\label{4-16-1}
0<C_1\leq \varepsilon ^{\frac{1}{N-4}}\lambda_{j,\varepsilon}\leq C_2<+\infty,~\mbox{for}~j=1,\cdots,k,
\end{equation}
and
\begin{equation}\label{4-18-13}
\nabla_\lambda \Psi_k(a^k,\Lambda^k)=0,~\mbox{with}~a^k=(a_1,\cdots,a_{k})~\mbox{and}~ \Lambda^k=(\lambda_{1},\cdots,\lambda_{k}).
\end{equation}
Here we denote (by subsequence)
$\lambda_j:=\displaystyle\lim_{\varepsilon\rightarrow 0}\big(\varepsilon ^{\frac{1}{N-4}}\lambda_{j,\varepsilon}\big)^{-1}$ for $j=1,\cdots,k$.
\end{Prop}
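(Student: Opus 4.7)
The plan is to insert the solution $u_\varepsilon$ into the local Pohozaev identity \eqref{clp-10} on the small ball $\Omega'=B_d(x_{j,\varepsilon})$ for each $j=1,\ldots,k$, and to compare the leading-order terms on the two sides. Write $\mu_j:=\lambda_{j,\varepsilon}^{-(N-2)/2}$. The volume term on the right of \eqref{clp-10} is evaluated directly from the bubble profile $PU_{x_{j,\varepsilon},\lambda_{j,\varepsilon}}$ (using $N\ge 5$, so that $U_{0,1}\in L^2(\R^N)$):
\begin{equation*}
-\varepsilon\int_{B_d(x_{j,\varepsilon})} u_\varepsilon^2=-\varepsilon B\lambda_{j,\varepsilon}^{-2}+o(\varepsilon\lambda_{j,\varepsilon}^{-2}),
\end{equation*}
while the surface integrals on the right of \eqref{clp-10} are of strictly smaller order.

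For the left-hand side, on $\partial B_d(x_{j,\varepsilon})$ Proposition~\ref{prop2.1} provides both $u_\varepsilon$ and $\nabla u_\varepsilon$ in terms of Green's function. Splitting $G(x_{j,\varepsilon},x)=S(x_{j,\varepsilon},x)-H(x_{j,\varepsilon},x)$ and isolating the singular piece $A\mu_j S(x_{j,\varepsilon},\cdot)$, I would write
\begin{equation*}
u_\varepsilon(x)=\frac{A\mu_j}{(N-2)\omega_N|x-x_{j,\varepsilon}|^{N-2}}+\Phi_{j,\varepsilon}(x)+\mbox{(lower order)},
\end{equation*}
where
\begin{equation*}
\Phi_{j,\varepsilon}(x):=-A\mu_j H(x_{j,\varepsilon},x)+A\sum_{i\ne j}\mu_i G(x_{i,\varepsilon},x)
\end{equation*}
is smooth and harmonic on $B_d(x_{j,\varepsilon})$, and $\Phi_{j,\varepsilon}(x_{j,\varepsilon})\longrightarrow -A\bigl(M_k(a^k)\mu\bigr)_j$ by the definitions $m_{jj}=R(a_j)$ and $m_{ji}=-G(a_i,a_j)$. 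A standard singular-times-regular pairing in the three Pohozaev boundary integrals then isolates the leading contribution: the singular-singular terms cancel by the radial symmetry on $\partial B_d(x_{j,\varepsilon})$, the regular-regular terms are of higher order in $d$, and the only surviving bilinear form yields
\begin{equation*}
\frac{(N-2)A^2}{2}\,\mu_j\bigl(M_k(a^k)\mu\bigr)_j+o(\mu_j^2)= \varepsilon B\lambda_{j,\varepsilon}^{-2}+o(\varepsilon\lambda_{j,\varepsilon}^{-2}).
\end{equation*}

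Since $\mu_j>0$, this displayed relation forces $\bigl(M_k(a^k)\mu\bigr)_j>0$ for every $j$ and every small $\varepsilon>0$. As $M_k(a^k)$ is a symmetric $Z$-matrix with positive diagonal $R(a_j)>0$ and non-positive off-diagonal entries $-G(a_i,a_j)$, the existence of a positive vector $\mu$ with $M_k(a^k)\mu>0$ forces $M_k(a^k)$ to be a non-singular symmetric $M$-matrix, hence positive semi-definite, which gives the first assertion. When $M_k(a^k)$ is assumed to be positive definite, it is invertible, and solving the displayed relation for $\mu_j$ gives $\mu_j^2\sim\varepsilon\lambda_{j,\varepsilon}^{-2}$, equivalently $\lambda_{j,\varepsilon}^{-(N-4)}\sim\varepsilon$, which is precisely the two-sided bound \eqref{4-16-1}. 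Finally, setting $\tilde\lambda_{j,\varepsilon}:=\varepsilon^{1/(N-4)}\lambda_{j,\varepsilon}$ (so $\tilde\lambda_{j,\varepsilon}\to 1/\lambda_j$), dividing the displayed relation by $\varepsilon^{(N-2)/(N-4)}$ and passing to the limit yields
\begin{equation*}
\sum_{i=1}^{k} m_{ji}(a^k)\,\lambda_i^{(N-2)/2}=\frac{2B}{(N-2)A^2}\,\lambda_j^{(6-N)/2}\quad\text{for each }j,
\end{equation*}
which is exactly the system $\partial_{\lambda_j}\Psi_k(a^k,\Lambda^k)=0$, i.e.\ \eqref{4-18-13}.

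The main technical obstacle is the careful bookkeeping in the second step: the three boundary integrals in \eqref{clp-10} each produce contributions of sizes $\mu_j^2$, $\mu_j\mu_i$ and $\mu_i\mu_\ell$ that must be expanded beyond leading order. One must exploit $\Delta_x G(x_{i,\varepsilon},\cdot)=0$ on $B_d(x_{j,\varepsilon})$ for $i\ne j$ together with the vanishing moments $\int_{\partial B_d}\nu\,dS=0$ to kill spurious terms and extract exactly the symmetric bilinear pairing in $M_k(a^k)$; the $C^1$ control of the remainder in \eqref{clp-2} furnished by Proposition~\ref{prop2.1} is what keeps all error contributions safely below the target order $\varepsilon\lambda_{j,\varepsilon}^{-2}$.
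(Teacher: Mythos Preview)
Your overall strategy matches the paper's: plug Proposition~\ref{prop2.1} into the Pohozaev identity \eqref{clp-10} on $B_\theta(x_{j,\varepsilon})$, and extract the relation between $\bigl(M_k\mu\bigr)_j$ and $\varepsilon\lambda_{j,\varepsilon}^{-2}$. Your singular/regular splitting is equivalent to the paper's bilinear form $P(u,v)$ together with the Green-function computation \eqref{luo2}. Your $M$-matrix argument for non-negativity (a symmetric $Z$-matrix admitting $\mu>0$ with $M\mu>0$ is positive definite, hence the limit matrix is non-negative) is a legitimate variant of the paper's Perron--Frobenius argument, which instead dots \eqref{lla} with the positive first eigenvector $\vec\chi$ of $M_k(a^k)$ to conclude $\rho(a^k)\ge 0$.

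There is, however, a genuine gap. The paper does \emph{not} obtain both inequalities in \eqref{4-16-1} from the Pohozaev relation alone: it imports the bound $\Lambda_{j,\varepsilon}:=\bigl(\varepsilon^{1/(N-4)}\lambda_{j,\varepsilon}\bigr)^{-1}\ge C>0$ from Corollary~3.7 of \cite{Cerqueti} as an external input. Your proposal omits this entirely and asserts that ``solving the displayed relation for $\mu_j$'' gives $\mu_j^2\sim\varepsilon\lambda_{j,\varepsilon}^{-2}$; but the relation is a coupled $k\times k$ system, and the error you write as $o(\mu_j^2)$ is actually governed by $\lambda_\varepsilon=\min_l\lambda_{l,\varepsilon}$ (cf.\ \eqref{cc5}), i.e.\ by $\max_l\mu_l$, not by $\mu_j$. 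Without knowing a priori that the $\mu_j$ are mutually comparable---which is exactly what the Cerqueti bound supplies---you cannot conclude $(M_k(a^k)\mu)_j>0$ for each $j$, nor can you pass from the summed estimate $|\vec\mu|^{2(N-4)/(N-2)}\lesssim\varepsilon$ (which the positive-definiteness of $M_k$ does give) to the individual upper bounds $\varepsilon^{1/(N-4)}\lambda_{j,\varepsilon}\le C_2$. You should invoke \cite{Cerqueti} for that half of \eqref{4-16-1}; the Pohozaev identity then furnishes the other half and the limiting system \eqref{4-18-13}.
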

\begin{proof}
We define the following quadratic form
\begin{equation*}
\begin{split}
P(u,v)=&- \theta\int_{\partial B_\theta(x_{j,\varepsilon})}
\big\langle \nabla u ,\nu\big\rangle
\big\langle \nabla v,\nu\big\rangle
+\frac{\theta}{2}\int_{\partial B_\theta(x_{j,\varepsilon})}
\big\langle \nabla u , \nabla v \big\rangle
\\&
+\frac{2-N}{4}\int_{\partial B_\theta(x_{j,\varepsilon})}
\big\langle \nabla u ,  \nu \big\rangle v
+\frac{2-N}{4}\int_{\partial B_\theta(x_{j,\varepsilon})}
\big\langle \nabla v ,  \nu \big\rangle u.
\end{split}
\end{equation*}
Note that if $u$ and $v$ are harmonic in $ B_d(x_{j,\varepsilon})\backslash \{x_{j,\varepsilon}\}$, then $P(u,v)$ is independent of $\theta>0$.
Let  $\Omega'=B_{\theta}(x_{j,\varepsilon})$ in \eqref{clp-10}, then from  \eqref{clp-2} and  \eqref{lp21}, we have
\begin{equation}\label{lt1}
\begin{split}
 \sum^k_{l=1}\sum^k_{m=1}\frac{P\big(G(x_{m,\varepsilon},x),G(x_{l,\varepsilon},x)\big)}
{\lambda^{(N-2)/2}_{m,\varepsilon}\lambda^{(N-2)/2}_{l,\varepsilon}} =-\frac{ B\varepsilon }{A^2\lambda^{2}_{j,\varepsilon}}+
O\Big(\frac{1}{\lambda^N_{\varepsilon}}+
\frac{\varepsilon}{\lambda^{N-2}_{\varepsilon}}\Big)
+o\Big(\frac{\varepsilon}{\lambda^{(N+2)/2}_{\varepsilon}}+\frac{\varepsilon^2}{\lambda^{4}_{\varepsilon}}\Big),
\end{split}
\end{equation}
where $A,B$ are the constants  in \eqref{a1}.

Next we have the following estimate for which the  proof  is left in Section 5:
\begin{equation}\label{luo2}
P\Big(G(x_{m,\varepsilon},x), G(x_{l,\varepsilon},x)\Big)=
\begin{cases}
-\frac{(N-2)R(x_{j,\varepsilon})}{2},~&\mbox{for}~l,m=j.\\[1mm]
\frac{(N-2)G(x_{j,\varepsilon},x_{l,\varepsilon})}{4},~
&\mbox{for}~m=j, ~l\neq j. \\[1mm]
\frac{(N-2)G(x_{j,\varepsilon},x_{m,\varepsilon})}{4},~&\mbox{for}~m\neq j, ~l=j.\\[1mm]
0, ~&\mbox{for}~l,m\neq j.
\end{cases}
\end{equation}
Then \eqref{lt1} and \eqref{luo2} imply
\begin{equation}\label{cc5}
 \frac{ R(x_{j,\varepsilon})}{\lambda^{N-2}_{j,\varepsilon}}
-\sum^k_{l\neq j}\frac{G(x_{j,\varepsilon},x_{l,\varepsilon})}{\lambda^{(N-2)/2}_{j,\varepsilon}\lambda^{(N-2)/2}_{l,\varepsilon}}
 =\frac{2B\varepsilon}{A^2(N-2)\lambda^{2}_{j,\varepsilon}}+
O\Big(\frac{1}{\lambda^N_{\varepsilon}}+
\frac{\varepsilon}{\lambda^{N-2}_{\varepsilon}}\Big)
+o\Big(\frac{\varepsilon}{\lambda^{(N+2)/2}_{\varepsilon}}+\frac{\varepsilon^2}{\lambda^{4}_{\varepsilon}}\Big).
\end{equation}
Let $\Lambda_{j,\varepsilon}:=\Big(\varepsilon ^{\frac{1}{N-4}}\lambda_{j,\varepsilon}\Big)^{-1}$. From
Corollary 3.7 in \cite{Cerqueti}, we find $$\Lambda_{j,\varepsilon}\geq C>0,~\mbox{for}~j=1,\cdots,k.$$
Now we define
$\Lambda^k_{\varepsilon}=\max\big\{\Lambda_{j,\varepsilon}, j=1,\cdots,k\big\}$. Then
 \begin{equation}\label{lt3}
 \begin{split}
 \Lambda^{N-2}_{j,\varepsilon} R(x_{j,\varepsilon})
-\sum^k_{l\neq j}\Lambda^{(N-2)/2}_{j,\varepsilon}\Lambda^{(N-2)/2}_{l,\varepsilon}G(x_{j,\varepsilon},x_{l,\varepsilon})
=\frac{2B}{A^2(N-2)}\Lambda^{2}_{j,\varepsilon}+o\Big((\Lambda_{\varepsilon}^k)^{N-2}\Big).
\end{split}
\end{equation}
Since $\frac{1}{k}\displaystyle\sum^k_{l=1}\Lambda^{N-2}_{l,\varepsilon}\leq (\Lambda^k_{\varepsilon})^{N-2}\leq \displaystyle\sum^k_{l=1}\Lambda^{N-2}_{l,\varepsilon}$, \eqref{lt3} gives us
\begin{equation}\label{lla}
 \Big(M_k(x_\varepsilon)+o(1)\Big)
\vec{\mu}_{k,\varepsilon}^T=\frac{2B}{A^2(N-2)} \big(\Lambda^{\frac{6-N}{2}}_{1,\varepsilon},\cdots,
\Lambda^{\frac{6-N}{2}}_{k,\varepsilon}\big)^T,
\end{equation}
where $\vec{\mu}_{k,\varepsilon}=\big(\Lambda_{1,\varepsilon}^{(N-2)/2},\cdots,\Lambda_{k,\varepsilon}^{(N-2)/2}
\big)$ and
$x_{\varepsilon}=(x_{1,\varepsilon},\cdots,x_{k,\varepsilon})$.
 Now we  recall that the first eigenvector of a symmetric matrix may be chosen with all its components strictly positive (see also Appendix A in \cite{Bahri2}). So if $\rho(a^k)$ is
   the first eigenvalue of $M_k(a^k)$, then there exists a first eigenvector $\overrightarrow{\chi}(a^k)$ of $M_k(a^k)$  such that all its components are strictly positive.
   Then $\eqref{lla}$ gives us that
  \begin{equation*}
 \overrightarrow{\chi}(a^k)\Big(M_k(x_\varepsilon)+o(1)\Big)
\vec{\mu}_{k,\varepsilon}^T=\frac{2B}{A^2(N-2)} \overrightarrow{\chi}(a^k) \big(\Lambda^{\frac{6-N}{2}}_{1,\varepsilon},\cdots,
\Lambda^{\frac{6-N}{2}}_{k,\varepsilon}\big)^T> 0.
\end{equation*}
Also we know
  \begin{equation*}
 \rho(a^k) \Big(\overrightarrow{\chi}(a^k)
\vec{\mu}_{k,\varepsilon}^T\Big)= \overrightarrow{\chi}(a^k) M_k(a^k)
\vec{\mu}_{k,\varepsilon}^T~\mbox{and}~\overrightarrow{\chi}(a^k)
\vec{\mu}_{k,\varepsilon}^T>0.
\end{equation*}
Then these mean that  $ \rho(a^k)\geq 0$ and $M_k(a^k)$ is a non-negative matrix.
Moreover, if $M_k(a^k)$ is a positive matrix, we find  $\Lambda_{j,\varepsilon}$ is bounded for $j=1,\cdots,k$. And then these imply \eqref{4-16-1}.
Moreover letting $\varepsilon \rightarrow 0$ in \eqref{lt3}, we find \eqref{4-18-13}.
\end{proof}

\begin{Prop}
Under the conditions in Proposition \ref{Prop-4-26-1}, it holds
\begin{equation}\label{aclp-2}
u_{\varepsilon}(x)=A\Big(\sum^k_{j=1}\frac{G(x_{j,\varepsilon},x)}{
(\lambda_{j,\varepsilon})^{(N-2)/2}} \Big)
+\begin{cases}
 O\Big(\frac{1}{\lambda_\varepsilon^{5/2}}\Big),~&N=5,\\[2mm]
 O\Big(\frac{1}{\lambda_\varepsilon^{(N+2)/2}}\Big),~&N\geq 6,
\end{cases}
~\mbox{in}~  C^1
\Big(\Omega\backslash\bigcup^k_{j=1}B_{2d}(x_{j,\varepsilon})\Big).
\end{equation}
\end{Prop}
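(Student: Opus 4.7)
The plan is to bootstrap the coarse expansion \eqref{clp-2} of Proposition~\ref{prop2.1} by substituting in the sharp two--sided bound on $\lambda_{j,\varepsilon}$ provided by Proposition~\ref{Prop-4-26-1}. Since $M_k(a^k)$ is positive, \eqref{4-16-1} tells us that all $\lambda_{j,\varepsilon}$ are mutually comparable and that $\varepsilon$ and $\lambda_\varepsilon^{-(N-4)}$ are of the same order, i.e.\ $\varepsilon\sim\lambda_\varepsilon^{-(N-4)}$. No new integral estimate is needed; the argument is a direct rewriting of the three error terms in \eqref{clp-2} under this $\varepsilon$--to--$\lambda_\varepsilon$ conversion.

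Concretely, the error in \eqref{clp-2} consists of the three pieces
\begin{equation*}
O\bigl(\lambda_\varepsilon^{-(N+2)/2}\bigr),\qquad
O\bigl(\varepsilon\lambda_\varepsilon^{-(N-2)/2}\bigr),\qquad
o\bigl(\varepsilon\lambda_\varepsilon^{-2}\bigr).
\end{equation*}
Inserting $\varepsilon\sim\lambda_\varepsilon^{-(N-4)}$, the latter two become
$O\bigl(\lambda_\varepsilon^{-(3N-10)/2}\bigr)$ and
$o\bigl(\lambda_\varepsilon^{-(N-2)}\bigr)$ respectively. The next step is a simple comparison of exponents. For $N\ge 6$ one has $(3N-10)/2\ge(N+2)/2$ and $N-2\ge(N+2)/2$, so the first piece dominates and the total error is $O\bigl(\lambda_\varepsilon^{-(N+2)/2}\bigr)$, matching the claim. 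For $N=5$ the three exponents are $7/2$, $5/2$ and $3$, so the middle piece $O(\lambda_\varepsilon^{-5/2})$ is the worst, yielding exactly the stated bound $O(\lambda_\varepsilon^{-5/2})$; note that $o(\lambda_\varepsilon^{-3})$ is automatically absorbed since $3>5/2$.

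Finally, the same substitution works verbatim for the gradient estimate proved in Proposition~\ref{prop2.1}, since the $C^1$ part of \eqref{clp-2} was obtained with the identical error structure (by differentiating the Green's representation formula and applying the same estimates on $\int D_{x_i}G\cdot u_\varepsilon^{(N+2)/(N-2)}$ and $\varepsilon\int D_{x_i}G\cdot u_\varepsilon$ as in \eqref{at-3}--\eqref{at--3}). There is no real analytical obstacle here: the whole content of the proposition is the arithmetic reconciliation of powers once Proposition~\ref{Prop-4-26-1} is in hand. The only thing worth emphasising is why the sharpened estimate \eqref{aclp-2} is needed at all, namely that in the subsequent Pohozaev--identity arguments each boundary integral must be controlled to an order finer than $\varepsilon\lambda_\varepsilon^{-2}$, and the unsimplified form \eqref{clp-2} is too weak for this purpose.
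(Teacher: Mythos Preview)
Your proposal is correct and takes exactly the same approach as the paper, which simply states that \eqref{aclp-2} ``can be deduced by \eqref{clp-2} and \eqref{4-16-1}.'' You have correctly filled in the arithmetic comparison of exponents that the paper leaves implicit.
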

\begin{proof}
The estimate \eqref{aclp-2} can be deduced by \eqref{clp-2} and \eqref{4-16-1}.
\end{proof}
\begin{Prop}\label{prop--1}
Let $u_{\varepsilon}$ be  a  solution of \eqref{1.1} with \eqref{4-6-1} and  $M_k(a^k)$ be a positive matrix. Then
\begin{equation}\label{4-18-1}
\nabla_x \Psi_k(a^k,\Lambda^k)=0,~\mbox{with}~
 a^k=(a_1,\cdots,a_{k})~ \mbox{and}~\Lambda^k=(\lambda_{1},\cdots,\lambda_{k}),
 \end{equation}
 where $\lambda_j:=\displaystyle\lim_{\varepsilon\rightarrow 0}\big(\varepsilon ^{\frac{1}{N-4}}\lambda_{j,\varepsilon}\big)^{-1}$ for $j=1,\cdots,k$.
Moreover if $(a^k,\Lambda^k)$ is a nondegenerate critical point of $\Psi_{k}$, then
for $j=1,\cdots,k$, it follows
\begin{equation}\label{clp-03}
\big|x_{j,\varepsilon}-a_j\big|=
 \begin{cases}
 O\Big(\frac{1}{\lambda_{\varepsilon}}\Big),&~\mbox{if}~N=5,\\[1.5mm]
 O\Big(\frac{1}{\lambda^2_{\varepsilon}}\Big),&~\mbox{if}~N\geq 6,
 \end{cases}~\mbox{and}~~~~\,\,\,
\big|\lambda_j- \big(\varepsilon ^{\frac{1}{N-4}}\lambda_{j,\varepsilon}\big)^{-1}\big|=
 \begin{cases}
 O\Big(\frac{1}{\lambda_{\varepsilon}}\Big),&~\mbox{if}~N=5,\\[1.5mm]
 O\Big(\frac{1}{\lambda^2_{\varepsilon}}\Big),&~\mbox{if}~N\geq 6.
 \end{cases}
\end{equation}
\end{Prop}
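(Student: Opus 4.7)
The plan is to extract $\nabla_x\Psi_k=0$ from the local Pohozaev identity \eqref{clp-1} applied on $\Omega'=B_\theta(x_{j,\varepsilon})$, mirroring the way Proposition~\ref{Prop-4-26-1} extracted $\nabla_\lambda\Psi_k=0$ from \eqref{clp-10}. The right-hand side of \eqref{clp-1} is subleading by \eqref{aclp-2}: the critical-exponent boundary term is $O(\lambda_\varepsilon^{-N})$ and the $\varepsilon u^2$ term is $O(\varepsilon\lambda_\varepsilon^{-(N-2)})$. Substituting \eqref{aclp-2} into the left-hand side produces a sum over $l,m\in\{1,\dots,k\}$ of
\begin{equation*}
Q_i(u,v):=-\int_{\partial B_\theta(x_{j,\varepsilon})}\partial_\nu u\,\partial_{x_i}v-\int_{\partial B_\theta(x_{j,\varepsilon})}\partial_\nu v\,\partial_{x_i}u+\int_{\partial B_\theta(x_{j,\varepsilon})}\langle\nabla u,\nabla v\rangle\nu_i,
\end{equation*}
evaluated at $u=A\lambda_{m,\varepsilon}^{-(N-2)/2}G(x_{m,\varepsilon},\cdot)$ and $v=A\lambda_{l,\varepsilon}^{-(N-2)/2}G(x_{l,\varepsilon},\cdot)$.

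The central computation, paralleling \eqref{luo2}, is to show that, up to an explicit constant $c=c(N)>0$,
\begin{equation*}
Q_i\bigl(G(x_{m,\varepsilon},\cdot),G(x_{l,\varepsilon},\cdot)\bigr)=\begin{cases}-c\,D_{x_i}R(x_{j,\varepsilon})& l=m=j,\\ \tfrac{c}{2}D_{x_i}G(x_{j,\varepsilon},x_{l,\varepsilon})& m=j,\ l\neq j,\\ \tfrac{c}{2}D_{x_i}G(x_{j,\varepsilon},x_{m,\varepsilon})& l=j,\ m\neq j,\\ 0& l,m\neq j.\end{cases}
\end{equation*}
The case $l,m\neq j$ is immediate since then both Green's functions are harmonic on $B_\theta(x_{j,\varepsilon})$, so $Q_i$ is $\theta$-independent and sending $\theta\to 0$ forces it to vanish. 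The remaining cases are handled by splitting $G(x_{j,\varepsilon},\cdot)=S(x_{j,\varepsilon},\cdot)-H(x_{j,\varepsilon},\cdot)$: the singular part $S$ pairs against its smooth companion via a residue-type calculation on the shrinking sphere and produces the indicated derivative at $x_{j,\varepsilon}$. Plugging this into \eqref{clp-1} yields the approximate identity
\begin{equation*}
\frac{D_{x_i}R(x_{j,\varepsilon})}{\lambda_{j,\varepsilon}^{N-2}}-\sum_{l\neq j}\frac{D_{x_i}G(x_{j,\varepsilon},x_{l,\varepsilon})}{\lambda_{j,\varepsilon}^{(N-2)/2}\lambda_{l,\varepsilon}^{(N-2)/2}}=O\bigl(\tfrac{1}{\lambda_\varepsilon^{N}}+\tfrac{\varepsilon}{\lambda_\varepsilon^{N-2}}\bigr)+o\bigl(\tfrac{\varepsilon}{\lambda_\varepsilon^{(N+2)/2}}+\tfrac{\varepsilon^2}{\lambda_\varepsilon^{4}}\bigr).
\end{equation*}
Rescaling by $\Lambda_{j,\varepsilon}:=(\varepsilon^{1/(N-4)}\lambda_{j,\varepsilon})^{-1}$, which is bounded by Proposition~\ref{Prop-4-26-1}, this is (up to a positive constant factor) $\partial_{x_{j,i}}\Psi_k(x_\varepsilon,\Lambda_\varepsilon)+o(1)$; letting $\varepsilon\to 0$ gives \eqref{4-18-1}.

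For the quantitative estimate \eqref{clp-03}, I combine the rescaled form of the identity above with the rescaled version of \eqref{lt3} to obtain the coupled system $\nabla\Psi_k(x_\varepsilon,\Lambda_\varepsilon)=O(\delta_N(\varepsilon))$, where $\delta_5(\varepsilon)=O(\lambda_\varepsilon^{-1})$ (reflecting the weaker remainder $O(\lambda_\varepsilon^{-5/2})$ in \eqref{aclp-2}) and $\delta_N(\varepsilon)=O(\lambda_\varepsilon^{-2})$ for $N\geq 6$. Nondegeneracy of $(a^k,\Lambda^k)$ means $D^2\Psi_k(a^k,\Lambda^k)$ is invertible, so a Taylor expansion of $\nabla\Psi_k$ about $(a^k,\Lambda^k)$ converts the system into $|x_\varepsilon-a^k|+|\Lambda_\varepsilon-\Lambda^k|=O(\delta_N(\varepsilon))$, which is exactly \eqref{clp-03}.

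The main obstacle will be the precise residue-style evaluation of $Q_i$ in the mixed case $m=j$, $l\neq j$, where the singular part of $G(x_{j,\varepsilon},\cdot)$ is paired against the smooth function $G(x_{l,\varepsilon},\cdot)$ on the shrinking sphere and one must extract $D_{x_i}G(x_{j,\varepsilon},x_{l,\varepsilon})$ with the correct constant; the corresponding diagonal case, producing $D_{x_i}R(x_{j,\varepsilon})$, is technically similar but more delicate because both $S$ and $H$ components contribute. A secondary difficulty is the error bookkeeping for $N=5$, where the weaker decay in \eqref{aclp-2} forces the coarser rate $O(\lambda_\varepsilon^{-1})$ in \eqref{clp-03} and requires more care in controlling the boundary integrals against the reduced-regularity remainder.
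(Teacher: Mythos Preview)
Your approach is exactly the paper's: apply \eqref{clp-1} on $B_\theta(x_{j,\varepsilon})$, insert the expansion \eqref{aclp-2}, evaluate the bilinear form $Q$ on pairs of Green's functions (the paper's \eqref{luo1}, proved in Section~5 by the singular/regular splitting you describe), obtain the approximate identity \eqref{luo-tian}--\eqref{clp-01}, and conclude via nondegeneracy. One caution on the constants: unlike the $P$-form in \eqref{luo2}, for $Q$ the mixed case $m=j$, $l\neq j$ produces the full $D_{x_i}G(x_{l,\varepsilon},x_{j,\varepsilon})$ (not half of it), so the double sum yields $-\partial_iR+2\sum_{l\neq j}D_{x_i}G$, which is what matches $\partial_{x_{j,i}}\Psi_k$; your stated identity $\partial_iR-\sum_{l\neq j}\partial_iG$ has the wrong relative weight and would \emph{not} be proportional to $\partial_{x_{j,i}}\Psi_k$, so be careful when you carry out the residue computation you flagged.
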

\begin{proof}
First, we define the following quadratic form
\begin{equation*}
Q(u,v)=-\int_{\partial B_\theta(x_{j,\varepsilon})}\frac{\partial v}{\partial \nu}\frac{\partial u}{\partial x_i}-
\int_{\partial B_\theta(x_{j,\varepsilon})}\frac{\partial u}{\partial \nu}\frac{\partial v}{\partial x_i}
+\int_{\partial B_\theta(x_{j,\varepsilon})}\big\langle \nabla u,\nabla v \big\rangle \nu_i.
\end{equation*}
Note that if $u$ and $v$ are harmonic in $ B_d(x_{j,\varepsilon})\backslash \{x_{j,\varepsilon}\}$, then $Q(u,v)$ is independent of $\theta\in (0,d]$.
Letting  $\Omega'=B_{\theta}(x_{j,\varepsilon})$ in \eqref{clp-1} and using  \eqref{aclp-2},  we have
\begin{equation}\label{luo-tian1}
 \sum^k_{l=1}\sum^k_{m=1}\frac{Q\big(G(x_{m,\varepsilon},x),G(x_{l,\varepsilon},x)\big)}{
 \lambda_{m,\varepsilon} ^{(N-2)/2}
 \lambda_{l,\varepsilon} ^{(N-2)/2}}=
 \begin{cases}
 O\Big(\frac{1}{\lambda^4_{\varepsilon}}\Big),&~\mbox{if}~N=5,\\[1.5mm]
 O\Big(\frac{1}{\lambda^N_{\varepsilon}}\Big),&~\mbox{if}~N\geq 6.
 \end{cases}
\end{equation}
Next we have the following estimate for which  the proof  is left in Section 5:
\begin{equation}\label{luo1}
Q\Big(G(x_{m,\varepsilon},x),G(x_{l,\varepsilon},x)\Big)=
\begin{cases}
-\frac{\partial R(x_{j,\varepsilon})} {\partial x_i},~&\mbox{for}~l,m=j,\\[1mm]
D_{x_i}G(x_{m,\varepsilon},x_{j,\varepsilon}),
 ~&\mbox{for}~m\neq j,~l=j,\\[1mm]
D_{x_i}G(x_{l,\varepsilon},x_{j,\varepsilon}),
~&\mbox{for}~m=j,~l\neq j,\\[1mm]
0,~&\mbox{for}~l,m\neq j.
\end{cases}
\end{equation}
Then \eqref{luo-tian1} and \eqref{luo1} imply
\begin{equation}\label{luo-tian}
\frac{1}{2\lambda_{j,\varepsilon}^{N-2}}\frac{\partial R(x_{j,\varepsilon})}{
\partial{x_i}}-\displaystyle\sum^k_{l=1,l\neq j}
\frac{1}{\lambda^{(N-2)/2}_{j,\varepsilon}\lambda^{(N-2)/2}_{l,\varepsilon}}
\frac{\partial G(x_{j,\varepsilon},x_{l,\varepsilon})}{\partial x_i}=
 \begin{cases}
 O\Big(\frac{1}{\lambda^4_{\varepsilon}}\Big),&~\mbox{if}~N=5,\\[1.5mm]
 O\Big(\frac{1}{\lambda^N_{\varepsilon}}\Big),&~\mbox{if}~N\geq 6.
 \end{cases}
\end{equation}
Let $\Lambda_{j,\varepsilon}:=\Big(\varepsilon ^{\frac{1}{N-4}}\lambda_{j,\varepsilon}\Big)^{-1}$,
 we can  rewrite \eqref{luo-tian}  as follows:
\begin{equation}\label{clp-01}
 \frac{\Lambda_{j,\varepsilon}^{N-2}}{2}\frac{\partial R(x_{j,\varepsilon})}{
\partial{x_i}}-\displaystyle\sum^k_{l=1,l\neq j}
\Lambda_{j,\varepsilon}^{(N-2)/2}\Lambda_{l,\varepsilon}^{(N-2)/2}
\frac{\partial G(x_{j,\varepsilon},x_{l,\varepsilon})}{\partial x_i}=
 \begin{cases}
 O\Big(\frac{1}{\lambda_{\varepsilon}}\Big),&~\mbox{if}~N=5,\\[1.5mm]
 O\Big(\frac{1}{\lambda^2_{\varepsilon}}\Big),&~\mbox{if}~N\geq 6.
 \end{cases}
\end{equation}
Then taking  $\varepsilon\rightarrow 0$ in \eqref{clp-01}, we find \eqref{4-18-1}.
Moreover by the assumption that $(a^k,\Lambda^k)$ is a nondegenerate critical point of $\Psi_{k}$,
 we get \eqref{clp-03} from \eqref{4-16-1}, \eqref{cc5} and \eqref{clp-01}.
\end{proof}

\begin{proof}[\textbf{Proof of Theorem \ref{th1-1}}]
Theorem \ref{th1-1} can be deduced by
\eqref{4-18-11}, \eqref{4-18-12}, \eqref{4-16-1}, \eqref{4-18-13} and
\eqref{4-18-1}.
\end{proof}

\section{Regularization and blow-up analysis}\label{s3}
\setcounter{equation}{0}

To estimate the number of concentrated solutions to \eqref{1.1}, we need first to obtain local uniqueness of such type of solutions. To this end, we need to estimate the difference between   two solutions concentrating
at the same points.

Let $u^{(1)}_{\varepsilon}(x)$, $u^{(2)}_{\varepsilon}(x)$ be two different solutions of $\eqref{1.1}$ satisfying \eqref{4-6-1}. Under the assumption  that  $M_k(a^k)$ is a positive matrix, we find from Theorem \ref{th1-1} that
$u^{(l)}_{\varepsilon}(x)$ can be written as
\begin{equation*}
u^{(l)}_{\varepsilon}=\sum^k_{j=1} PU_{x^{(l)}_{j,\varepsilon}, \lambda^{(l)}_{j,\varepsilon}}+w^{(l)}_{\varepsilon},
\end{equation*}
satisfying, for $j=1,\cdots,k$, $l=1,2$,
 $\lambda^{(l)}_{j,\varepsilon}=
\big(u_\varepsilon(x^{(l)}_{j,\varepsilon})\big)^{\frac{2}{N-2}}$,
\begin{equation*}
 x^{(l)}_{j,\varepsilon}\rightarrow a_j, ~\Big(\varepsilon ^{\frac{1}{N-4}} \lambda^{(l)}_{j,\varepsilon}\Big)^{-1}\rightarrow \lambda_j,~ \|w^{(l)}_{\varepsilon}\|=o(1)~\mbox{and}~w^{(l)}_\varepsilon\in \bigcap^k_{j=1}E_{x^{(l)}_{j,\varepsilon},\lambda^{(l)}_{j,\varepsilon}}.
\end{equation*}
Let $Q_\varepsilon$ be a quadratic form on $H^1_0(\Omega)$ given by
\begin{equation*}
 \big\langle Q_\varepsilon u,v\big\rangle=\big\langle u,v \big\rangle-\int_{\mathbb R^N}\Big[(2^*-1) \Big(\sum^k_{j=1}PU_{x^{(1)}_{j,\varepsilon},\lambda^{(1)}_{j,\varepsilon}}\Big)^{2^*-2}+\varepsilon\Big] uv,~~ \forall \; u,v \in  H^1_0(\Omega).
\end{equation*}
\begin{Prop}\label{Prop-Luo2}
For any $\varepsilon>0$ sufficiently small, there exists a constant $\rho>0$ such that
\begin{equation*}
\big\langle Q_\varepsilon v,v\big\rangle\geq \rho \|v \|^2, \quad \forall \;  v \in  \bigcap^k_{j=1}E_{x^{(1)}_{j,\varepsilon},\lambda^{(1)}_{j,\varepsilon}}.
\end{equation*}
 \end{Prop}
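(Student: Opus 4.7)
The plan is to argue by contradiction using a standard blow-up/rescaling argument near each concentration point $x^{(1)}_{j,\varepsilon}$, together with the known non-degeneracy of the linearization of the critical equation at the bubble $U_{0,1}$.

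Suppose the claim fails. Then there exist sequences $\varepsilon_n \to 0$ and $v_n \in \bigcap_{j=1}^k E_{x^{(1)}_{j,\varepsilon_n},\lambda^{(1)}_{j,\varepsilon_n}}$ with $\|v_n\|=1$ and $\langle Q_{\varepsilon_n} v_n, v_n\rangle = o(1)$. Up to a subsequence, $v_n \rightharpoonup v_\infty$ weakly in $H^1_0(\Omega)$. The first step is to show $v_\infty \equiv 0$: since $\varepsilon_n \to 0$ and $PU_{x^{(1)}_{j,\varepsilon_n},\lambda^{(1)}_{j,\varepsilon_n}} \to 0$ weakly with concentration on a set of measure zero, testing the orthogonality conditions in $E_{x^{(1)}_{j,\varepsilon_n},\lambda^{(1)}_{j,\varepsilon_n}}$ after integration by parts and passing to the limit shows that $v_\infty$ is orthogonal in $H^1_0(\Omega)$ to a dense set, forcing $v_\infty=0$.

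Next, for each $j=1,\ldots,k$, define the rescaled sequence
\begin{equation*}
\tilde v_{j,n}(y) := (\lambda^{(1)}_{j,\varepsilon_n})^{-(N-2)/2}\, v_n\!\left(\frac{y}{\lambda^{(1)}_{j,\varepsilon_n}}+x^{(1)}_{j,\varepsilon_n}\right).
\end{equation*}
Because $\|\tilde v_{j,n}\|_{D^{1,2}(\mathbb{R}^N)}=\|v_n\|=1$, up to a subsequence $\tilde v_{j,n} \rightharpoonup v^*_j$ weakly in $D^{1,2}(\mathbb{R}^N)$. The orthogonality conditions defining $E_{x^{(1)}_{j,\varepsilon_n},\lambda^{(1)}_{j,\varepsilon_n}}$ translate, after rescaling and using the standard expansions $PU_{x,\lambda}=U_{x,\lambda}+O(\lambda^{-(N-2)/2})$, into the three orthogonality relations
\begin{equation*}
\int_{\mathbb{R}^N}\nabla U_{0,1}\cdot\nabla\Bigl(\tfrac{\partial U_{0,1}}{\partial\lambda}\Bigr)\, v^*_j = \int_{\mathbb{R}^N}\nabla U_{0,1}\cdot\nabla\Bigl(\tfrac{\partial U_{0,1}}{\partial x_i}\Bigr)\, v^*_j = 0,
\end{equation*}
(the latter for $i=1,\ldots,N$), which after integration by parts force $v^*_j$ to be $L^2(U_{0,1}^{2^*-2})$-orthogonal to the kernel of the linearized operator $-\Delta-(2^*-1)U_{0,1}^{2^*-2}$. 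On the other hand, testing the equation $\langle Q_{\varepsilon_n} v_n,\varphi\rangle=o(\|\varphi\|)$ against rescaled test functions supported in a neighborhood of $x^{(1)}_{j,\varepsilon_n}$ shows that $v^*_j$ is a weak solution of the linearized equation
\begin{equation*}
-\Delta v^*_j = (2^*-1)\,U_{0,1}^{2^*-2}\, v^*_j\quad\text{in }\mathbb{R}^N.
\end{equation*}
By the classical non-degeneracy of the Talenti bubble (Bianchi--Egnell), this kernel is exactly spanned by $\partial_\lambda U_{0,1}$ and $\partial_{x_i}U_{0,1}$, so the orthogonality forces $v^*_j\equiv 0$ for every $j$.

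Finally, I must upgrade this vanishing to a contradiction with $\|v_n\|=1$. The idea is to split $1=\|v_n\|^2$ into contributions from neighborhoods of the bumps plus a remainder. On a small ball $B_d(x^{(1)}_{j,\varepsilon_n})$, a scaling argument and the concentration-compactness principle together with $\tilde v_{j,n}\to 0$ locally in $L^{2^*}$ show that $\int_{B_d}(\sum_i PU_i)^{2^*-2}v_n^2 = o(1)$; on the complement the weight $(\sum_i PU_i)^{2^*-2}$ is $O(\lambda^{-2})$ and therefore negligible, while the $\varepsilon v_n^2$ term is controlled by Sobolev embedding and $\varepsilon\to 0$. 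Combining, the negative part of $\langle Q_{\varepsilon_n} v_n,v_n\rangle$ is $o(1)$, hence $1=\|v_n\|^2 = \langle Q_{\varepsilon_n} v_n,v_n\rangle + o(1)=o(1)$, a contradiction. The main obstacle I anticipate is handling the cross terms between distinct bubbles in the expansion of $(\sum_j PU_j)^{2^*-2}$, which requires the standard interaction estimates of Bahri--Coron/Musso--Pistoia type and careful use of the decay of $PU_{x,\lambda}$ away from its center; these are routine but must be done uniformly in $\varepsilon$ to ensure the $o(1)$ error is genuinely negligible.
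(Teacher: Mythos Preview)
The paper does not prove this proposition at all; it simply cites Lemma~1.7 of \cite{Musso1} together with two other references. Your attempt to give a self-contained contradiction/blow-up argument is therefore a genuinely different route. The overall strategy you outline---rescale around each bump, identify the weak limit, and use the nondegeneracy of the Talenti bubble---is the right circle of ideas, but the execution contains a real gap.

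The main problem is the step where you claim that the rescaled weak limit $v^*_j$ solves the linearized equation $-\Delta v^*_j=(2^*-1)U_{0,1}^{2^*-2}v^*_j$. To derive this you write ``testing the equation $\langle Q_{\varepsilon_n} v_n,\varphi\rangle=o(\|\varphi\|)$'', but your contradiction hypothesis is only the \emph{scalar} condition $\langle Q_{\varepsilon_n} v_n,v_n\rangle=o(1)$; nothing forces $Q_{\varepsilon_n}v_n\to 0$ as a functional. Without an equation for $v^*_j$ you cannot conclude it lies in the kernel, so the orthogonality to the kernel does not give $v^*_j=0$, and your final paragraph (which uses $\tilde v_{j,n}\to 0$ locally) collapses. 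A secondary issue is the argument that $v_\infty=0$: the orthogonality conditions in $E_{x,\lambda}$ are finitely many and the functions $\partial_\lambda PU$, $\partial_{x_i}PU$ concentrate and tend weakly to $0$, so passing to the limit yields no information on $v_\infty$---certainly not orthogonality to a dense set.

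The standard way to repair this (and what the cited references actually do) is \emph{not} to derive an equation for the limit, but to use directly the quantitative spectral gap for the linearized operator at $U_{0,1}$: for $w\in D^{1,2}(\mathbb R^N)$ orthogonal to $U_{0,1}$ and to $\partial_\lambda U_{0,1},\partial_{x_i}U_{0,1}$ one has $\int|\nabla w|^2-(2^*-1)\int U_{0,1}^{2^*-2}w^2\ge \alpha\int|\nabla w|^2$ for some fixed $\alpha>0$. One localizes near each bump, applies this inequality to the rescaled function (the orthogonality in $E_{x,\lambda}$ transfers with small error), and patches the pieces together using interaction estimates. This yields $\langle Q_\varepsilon v,v\rangle\ge \rho\|v\|^2$ directly, with no contradiction argument and no need to identify any weak limit. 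Note in particular that the spectral gap requires orthogonality to $U_{0,1}$ itself (the Morse index of the bubble is one), so the space $E_{x,\lambda}$ as used in the references implicitly carries the additional condition $\langle PU_{x,\lambda},v\rangle=0$; your argument never invokes this, which is another symptom of the same gap.
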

\begin{proof}
This is standard and  can be found in Lemma 1.7 of \cite{Musso1}. Also one can  refer to Proposition B.1
in \cite{Cao2} and Proposition 2.4.3 in \cite{Cao3}.
\end{proof}
\smallskip

Now we define  $\bar{\lambda}_\varepsilon:=
\min
\Big\{\lambda^{(1)}_{1,\varepsilon},\cdots,
\lambda^{(1)}_{k,\varepsilon},\lambda^{(2)}_{1,\varepsilon},\cdots,
\lambda^{(2)}_{k,\varepsilon}\Big\}$.
  \begin{Prop}\label{aaprop-A.2}
For $N\geq 6$, it holds
\begin{equation}\label{ddlp21}
\|w^{(1)}_{\varepsilon}-w^{(2)}_{\varepsilon}\|=
o\Big(\frac{1}{\bar{\lambda}_\varepsilon^{(N+2)/2}}\Big).
\end{equation}
\end{Prop}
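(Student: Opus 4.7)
The plan is to derive a linear equation for $\eta_\varepsilon:=w^{(1)}_\varepsilon-w^{(2)}_\varepsilon$, apply the coercivity of Proposition~\ref{Prop-Luo2} to its orthogonal projection, and then estimate every resulting error term by $o(\bar\lambda_\varepsilon^{-(N+2)/2})$. Setting $V^{(l)}_\varepsilon:=\sum_{j=1}^k PU_{x^{(l)}_{j,\varepsilon},\lambda^{(l)}_{j,\varepsilon}}$ and subtracting the two copies of \eqref{1.1}, a linearisation of the critical nonlinearity at $V^{(1)}_\varepsilon$ yields
\begin{equation*}
-\Delta\eta_\varepsilon-\bigl[(2^*-1)(V^{(1)}_\varepsilon)^{2^*-2}+\varepsilon\bigr]\eta_\varepsilon=\mathcal R_\varepsilon^{\mathrm{bub}}+\mathcal R_\varepsilon^{\mathrm{nl}},
\end{equation*}
where $\mathcal R_\varepsilon^{\mathrm{bub}}$ collects the contributions produced by the bubble discrepancy $V^{(1)}_\varepsilon-V^{(2)}_\varepsilon$ (including the linear-in-discrepancy and Dirichlet-projection pieces), and $\mathcal R_\varepsilon^{\mathrm{nl}}$ denotes the quadratic remainder $(u^{(1)}_\varepsilon)^{2^*-1}-(u^{(2)}_\varepsilon)^{2^*-1}-(2^*-1)(V^{(1)}_\varepsilon)^{2^*-2}(u^{(1)}_\varepsilon-u^{(2)}_\varepsilon)$.

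Because $w^{(1)}_\varepsilon$ is orthogonal to the generators at $(x^{(1)},\lambda^{(1)})$ while $w^{(2)}_\varepsilon$ is orthogonal to those at $(x^{(2)},\lambda^{(2)})$, the difference $\eta_\varepsilon$ does not automatically lie in $\bigcap_j E_{x^{(1)}_{j,\varepsilon},\lambda^{(1)}_{j,\varepsilon}}$. I would split $\eta_\varepsilon=\eta_\varepsilon^\perp+\sum_{j,\alpha}c_{j,\alpha}\,\phi^{(1)}_{j,\alpha}$, with $\phi^{(1)}_{j,\alpha}\in\{\partial_\lambda PU_{x^{(1)}_j,\lambda^{(1)}_j},\,\partial_{x_i}PU_{x^{(1)}_j,\lambda^{(1)}_j}\}$ and $\eta_\varepsilon^\perp\in\bigcap_j E_{x^{(1)}_{j,\varepsilon},\lambda^{(1)}_{j,\varepsilon}}$. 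The identity $\langle\eta_\varepsilon,\phi^{(1)}_{j,\alpha}\rangle=\langle w^{(2)}_\varepsilon,\phi^{(2)}_{j,\alpha}-\phi^{(1)}_{j,\alpha}\rangle$, combined with the sharp parameter closeness supplied by Proposition~\ref{prop--1} and the known smallness of $\|w^{(l)}_\varepsilon\|$, makes the parallel piece $\sum c_{j,\alpha}\phi^{(1)}_{j,\alpha}$ of norm $o(\bar\lambda_\varepsilon^{-(N+2)/2})$. Testing the linearised equation against $\eta_\varepsilon^\perp$ and invoking Proposition~\ref{Prop-Luo2} then gives
\begin{equation*}
\rho\,\|\eta_\varepsilon^\perp\|^2\le\bigl(\|\mathcal R_\varepsilon^{\mathrm{bub}}\|_{H^{-1}}+\|\mathcal R_\varepsilon^{\mathrm{nl}}\|_{H^{-1}}\bigr)\|\eta_\varepsilon^\perp\|+o(\bar\lambda_\varepsilon^{-(N+2)/2})\|\eta_\varepsilon^\perp\|.
\end{equation*}

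The heart of the argument — and the main obstacle — is bounding both $H^{-1}$ norms by $o(\bar\lambda_\varepsilon^{-(N+2)/2})$. For $\mathcal R_\varepsilon^{\mathrm{bub}}$ I would Taylor-expand
\begin{equation*}
PU_{x^{(1)}_j,\lambda^{(1)}_j}-PU_{x^{(2)}_j,\lambda^{(2)}_j}=\sum_i(x^{(1)}_{j,i}-x^{(2)}_{j,i})\,\partial_{x_i}PU+(\lambda^{(1)}_j-\lambda^{(2)}_j)\,\partial_\lambda PU+\text{quadratic},
\end{equation*}
and exploit the cancellation $-\Delta\partial_\alpha U=(2^*-1)U^{2^*-2}\partial_\alpha U$ on $\R^N$; what survives is the $\varepsilon$-contribution and the $PU-U$ correction, and these lie in the required order thanks to $|x^{(1)}_{j,\varepsilon}-x^{(2)}_{j,\varepsilon}|=O(\bar\lambda_\varepsilon^{-2})$ together with the corresponding estimate for $|\lambda^{(1)}_{j,\varepsilon}-\lambda^{(2)}_{j,\varepsilon}|$ that follows from Proposition~\ref{prop--1}, combined with the hypothesis $N\ge 6$. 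For $\mathcal R_\varepsilon^{\mathrm{nl}}$ I would use the pointwise inequality $|a^{2^*-1}-b^{2^*-1}-(2^*-1)c^{2^*-2}(a-b)|\lesssim(|a-c|^{2^*-2}+|b-c|^{2^*-2})|a-b|$ with H\"older in $L^{2^*}$, so that the factor $\|w^{(l)}_\varepsilon\|^{2^*-2}=o(1)$ absorbs one copy of $\|u^{(1)}_\varepsilon-u^{(2)}_\varepsilon\|$ and permits a bootstrap. The delicate point is precisely that each estimate must be strict $o(\cdot)$ rather than merely $O(\cdot)$, which is where the nondegeneracy of $(a^k,\Lambda^k)$ (via Proposition~\ref{prop--1}) and the $N\ge 6$ restriction enter the bookkeeping in an essential way.
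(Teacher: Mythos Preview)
Your overall architecture (orthogonal decomposition, coercivity of $Q_\varepsilon$, parameter closeness from Proposition~\ref{prop--1}) matches the paper. The gap is in your splitting of the right-hand side and the claim that $\|\mathcal R_\varepsilon^{\mathrm{nl}}\|_{H^{-1}}=o(\bar\lambda_\varepsilon^{-(N+2)/2})$. With $a=u^{(1)}_\varepsilon$, $b=u^{(2)}_\varepsilon$, $c=V^{(1)}_\varepsilon$, your pointwise inequality produces a factor $|b-c|^{2^*-2}=|\,(V^{(2)}_\varepsilon-V^{(1)}_\varepsilon)+w^{(2)}_\varepsilon|^{2^*-2}$, and near each bubble this is dominated by $|V^{(1)}_\varepsilon-V^{(2)}_\varepsilon|^{2^*-2}$, not by $|w^{(2)}_\varepsilon|^{2^*-2}$. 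Concretely, $\mathcal R_\varepsilon^{\mathrm{nl}}$ contains the pure-bubble Taylor remainder $(V^{(1)}_\varepsilon)^{2^*-1}-(V^{(2)}_\varepsilon)^{2^*-1}-(2^*-1)(V^{(1)}_\varepsilon)^{2^*-2}(V^{(1)}_\varepsilon-V^{(2)}_\varepsilon)$, whose $H^{-1}$ norm is $\sim\|V^{(1)}_\varepsilon-V^{(2)}_\varepsilon\|_{L^{2^*}}^{\,2^*-1}\sim\bar\lambda_\varepsilon^{-(N+2)/(N-2)}$, which is \emph{much larger} than $\bar\lambda_\varepsilon^{-(N+2)/2}$ for every $N\ge 5$. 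A matching piece with the opposite sign sits inside your $\mathcal R_\varepsilon^{\mathrm{bub}}$ (the second-order remainder when you Taylor-expand $\sum_j\big[(U_j^{(1)})^{2^*-1}-(U_j^{(2)})^{2^*-1}\big]$), so the sum is small but the individual $H^{-1}$ bounds you assert are false; the ``$\|w^{(l)}_\varepsilon\|^{2^*-2}=o(1)$'' absorption simply does not apply to this part.

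The paper sidesteps this by linearizing each $(u^{(l)}_\varepsilon)^{2^*-1}$ at its \emph{own} approximate profile $V^{(l)}_\varepsilon$, so the nonlinear remainders are $R^{(l)}_\varepsilon(w^{(l)}_\varepsilon)=(V^{(l)}_\varepsilon+w^{(l)}_\varepsilon)^{2^*-1}-(V^{(l)}_\varepsilon)^{2^*-1}-(2^*-1)(V^{(l)}_\varepsilon)^{2^*-2}w^{(l)}_\varepsilon$, genuinely of size $O(\|w^{(l)}_\varepsilon\|^{2^*-1})$. The only cross term is $l_\varepsilon=(2^*-1)\big[(V^{(1)}_\varepsilon)^{2^*-2}-(V^{(2)}_\varepsilon)^{2^*-2}\big]w^{(2)}_\varepsilon$, which when tested against $\overline w_{\varepsilon,1}$ is $O(\bar\lambda_\varepsilon^{-1})\|w^{(2)}_\varepsilon\|\,\|\overline w_{\varepsilon,1}\|$. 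Combining with the parallel-part bound (which you do correctly) yields $\|\overline w_\varepsilon\|=O\big(\sum_l\|w^{(l)}_\varepsilon\|^{(N+2)/(N-2)}\big)+O\big(\bar\lambda_\varepsilon^{-1}\|w^{(2)}_\varepsilon\|\big)$, and \eqref{lp21} finishes. Reorganize your right-hand side this way and your argument goes through.
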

\begin{proof}
First we define $\overline{w}_{\varepsilon}:=w^{(1)}_{\varepsilon}-w^{(2)}_{\varepsilon}$, then
 \begin{equation*}
Q_\varepsilon \overline{w}_{\varepsilon}
    =R_\varepsilon^{(1)}\big(w^{(1)}_{\varepsilon}\big)-R_\varepsilon^{(2)}\big(w^{(2)}_{\varepsilon}\big)+ l_\varepsilon,
 \end{equation*}
where
\begin{equation*}
\begin{split}
R^{(l)}_\varepsilon(w^{(l)}_\varepsilon)=&
\Big(\sum^k_{i=1}PU_{y^{(l)}_{i,\varepsilon},\lambda^{(l)}_{i,\varepsilon}}
+w^{(l)}_\varepsilon\Big)^{\frac{N+2}{N-2}}
-
\Big(\sum^k_{i=1}PU_{y^{(l)}_{i,\varepsilon},\lambda^{(l)}_{i,\varepsilon}}\Big)^{\frac{N+2}{N-2}}
-\big(\frac{N+2}{N-2}\big)
\Big(\sum^k_{i=1}PU_{y^{(l)}_{\varepsilon,i},\lambda^{(l)}_{\varepsilon,i}}\Big)^{\frac{4}{N-2}}
w^{(l)}_\varepsilon,
\end{split}\end{equation*}
and
 \begin{equation*}
l_\varepsilon= (2^*-1) \left[\Big(\sum^k_{j=1}PU_{x^{(1)}_{j,\varepsilon},\lambda^{(1)}_{j,\varepsilon}}\Big)^{2^*-2}-
\Big(\sum^k_{j=1}PU_{x^{(2)}_{j,\varepsilon},\lambda^{(2)}_{j,\varepsilon}}\Big)^{2^*-2}\right]
w^{(2)}_{\varepsilon}.
\end{equation*}
Now we write $\overline{w}_{\varepsilon}=\overline{w}_{\varepsilon,1}+\overline{w}_{\varepsilon,2}$ with
$\overline{w}_{\varepsilon,1}\in \displaystyle\bigcap^k_{j=1}E_{x^{(1)}_{j,\varepsilon},\lambda^{(1)}_{j,\varepsilon}}$ and $\overline{w}_{\varepsilon,2}\bot \displaystyle\bigcap^k_{j=1}E_{x^{(1)}_{j,\varepsilon},\lambda^{(1)}_{j,\varepsilon}}$.  Then
\begin{equation*}
\overline{w}_{\varepsilon,1}(x)=\overline{w}_{\varepsilon}(x) -\sum_{i=1}^k\Big(\alpha_{\varepsilon,i,0}\frac{\partial PU_{x^{(1)}_{i,\varepsilon},\lambda^{(1)}_{i,\varepsilon}}}
 {\partial \lambda}+\sum_{j=1}^N\alpha_{\varepsilon,i,j}
 \frac{\partial PU_{x^{(1)}_{i,\varepsilon},\lambda^{(1)}_{i,\varepsilon}}}
 {\partial x_j}\Big),
 \end{equation*}
for some constants $\alpha_{\varepsilon,i,j}$ with $i=1,\cdots,k$ and $j=0,\cdots,N$.
 Then
\[
\begin{split}
 \alpha_{\varepsilon,i,0}&\Big\|\frac{\partial PU_{x^{(1)}_{i,\varepsilon},\lambda^{(1)}_{i,\varepsilon}}}
 {\partial \lambda}\Big\|^2
    \\=& \Bigl\langle \overline{w}_{\varepsilon}(x),\frac{\partial PU_{x^{(1)}_{i,\varepsilon},\lambda^{(1)}_{i,\varepsilon}}}
 {\partial \lambda} \Bigr\rangle -\sum_{l=1,l\neq i}^k\alpha_{\varepsilon,l,0}\Big\langle \frac{\partial PU_{x^{(1)}_{i,\varepsilon},\lambda^{(1)}_{i,\varepsilon}}}
 {\partial \lambda},\frac{\partial PU_{x^{(1)}_{l,\varepsilon},\lambda^{(1)}_{l,\varepsilon}}}
 {\partial \lambda}\Big\rangle\\&+
 \sum_{l=1}^k\sum_{j=1}^N\alpha_{\varepsilon,l,j}\Big\langle \frac{\partial PU_{x^{(1)}_{i,\varepsilon},\lambda^{(1)}_{i,\varepsilon}}} {\partial \lambda},
 \frac{\partial PU_{x^{(1)}_{l,\varepsilon},\lambda^{(1)}_{l,\varepsilon}}}
 {\partial x_j}\Big\rangle  \\=&-\Bigl\langle w^{(2)}_{\varepsilon},
 \frac{\partial  PU_{x^{(1)}_{j,\varepsilon},\lambda^{(1)}_{j,\varepsilon}}}{\partial \lambda}-
 \frac{\partial  PU_{x^{(2)}_{j,\varepsilon},\lambda^{(2)}_{j,\varepsilon}}}{\partial \lambda}
    \Bigr\rangle+o\Big(\sum_{l=1,l\neq i}^k\frac{|\alpha_{\varepsilon,l,0}|}{\bar{\lambda}^2_\varepsilon}+ \sum_{l=1}^k\sum_{j=1}^N|\alpha_{\varepsilon,l,j}|\Big)\\=&
    O\Big( \big|x_{\varepsilon,i}^{(1)}-x^{(2)}_{\varepsilon,i}\big|+
\frac{\big|\lambda_{\varepsilon,i}^{(1)}-\lambda^{(2)}_{\varepsilon,i}\big|}{\bar{\lambda}^2_\varepsilon}
\Big)\|w^{(2)}_{\varepsilon}\|
    +o\Big(\sum_{l=1,l\neq i}^k\frac{|\alpha_{\varepsilon,l,0}|}{\bar{\lambda}^2_\varepsilon}+ \sum_{l=1}^k\sum_{j=1}^N|\alpha_{\varepsilon,l,j}|\Big).\end{split}
\]
Also from \eqref{clp-03}, we know $\big|x^{(1)}_{i,\varepsilon}-x^{(2)}_{i,\varepsilon}\big|=O\Big(\frac{1}{\bar \lambda^2_{\varepsilon}}\Big)$ and $\big|\lambda^{(1)}_{i,\varepsilon}-\lambda^{(2)}_{i,\varepsilon}\big|=O\big(1\big)$ for $N\geq 6$. Then
\begin{equation}\label{8-1-1}
\begin{split}
 \alpha_{\varepsilon,i,0}=
    O\Big(
\|w^{(2)}_{\varepsilon}\|\Big)
    +o\Big(\sum_{l=1,l\neq i}^k |\alpha_{\varepsilon,l,0}| + \sum_{l=1}^k\sum_{j=1}^N\bar{\lambda}^2_\varepsilon|\alpha_{\varepsilon,l,j}|\Big).\end{split}
\end{equation}
Similarly, we find
\begin{equation}\label{8-1-2}
\begin{split}
 \bar{\lambda}^2_\varepsilon|\alpha_{\varepsilon,i,j}|=
    O\Big(\|w^{(2)}_{\varepsilon}\|\Big)
    +o\Big(\sum_{i=1}^k|\alpha_{\varepsilon,i,0}| + \sum_{i=1}^k\sum_{m=1,m\neq j}^N \bar{\lambda}^2_\varepsilon|\alpha_{\varepsilon,i,m}|\Big).\end{split}
\end{equation}
Hence it follows from \eqref{8-1-1} and \eqref{8-1-2} that
\begin{equation}\label{8-1-5}
\sum_{i=1}^k|\alpha_{\varepsilon,i,0}| + \sum_{i=1}^k\sum_{j=1}^N \bar{\lambda}^2_\varepsilon|\alpha_{\varepsilon,i,j}|=O\Big(\|w^{(2)}_{\varepsilon}\|\Big).
\end{equation}
Then by the definition of $Q_{\varepsilon}$ and the fact $\overline{w}_{\varepsilon,1}\in \displaystyle\bigcap^k_{j=1}E_{x^{(1)}_{j,\varepsilon},\lambda^{(1)}_{j,\varepsilon}}$, we have
\begin{equation}\label{1-15-8}
\begin{split}
 &\left\langle Q_{\varepsilon}\,\Big(\alpha_{\varepsilon,i,0}\frac{\partial PU_{x^{(1)}_{i,\varepsilon},\lambda^{(1)}_{i,\varepsilon}}}
 {\partial \lambda}+\sum_{j=1}^N\alpha_{\varepsilon,i,j}
 \frac{\partial PU_{x^{(1)}_{i,\varepsilon},\lambda^{(1)}_{i,\varepsilon}}}
 {\partial x_j}\Big),\overline{w}_{\varepsilon,1}\right\rangle
 \\=&O\left(\int_{\Omega}\big|W(x)\big|\cdot \Big|\alpha_{\varepsilon,i,0}\frac{\partial PU_{x^{(1)}_{i,\varepsilon},\lambda^{(1)}_{i,\varepsilon}}}
 {\partial \lambda}+\sum_{j=1}^N\alpha_{\varepsilon,i,j}
 \frac{\partial PU_{x^{(1)}_{i,\varepsilon},\lambda^{(1)}_{i,\varepsilon}}}
 {\partial x_j}\Big|\cdot \big|\overline{w}_{\varepsilon,1}\big|\right)\\&+
 O\left(\varepsilon\int_{\Omega} \Big|\alpha_{\varepsilon,i,0}\frac{\partial PU_{x^{(1)}_{i,\varepsilon},\lambda^{(1)}_{i,\varepsilon}}}
 {\partial \lambda}+\sum_{j=1}^N\alpha_{\varepsilon,i,j}
 \frac{\partial PU_{x^{(1)}_{i,\varepsilon},\lambda^{(1)}_{i,\varepsilon}}}
 {\partial x_j}\Big|\cdot\big|\overline{w}_{\varepsilon,1}\big|\right),\end{split}
 \end{equation}
 where $W(x):= \Big(\displaystyle\sum^k_{l=1}PU_{x^{(1)}_{l,\varepsilon},\lambda^{(1)}_{l,\varepsilon}}\Big)^{2^*-2}-
\Big(\displaystyle\sum^k_{l=1}PU_{x^{(2)}_{l,\varepsilon},\lambda^{(2)}_{l,\varepsilon}}\Big)^{2^*-2}$.
Also we compute
\begin{equation}\label{07-11-11}
\begin{split}
\int_{\Omega}&\big|W(x) \big|\cdot\Big|\alpha_{\varepsilon,i,0}\frac{\partial PU_{x^{(1)}_{i,\varepsilon},\lambda^{(1)}_{i,\varepsilon}}}
 {\partial \lambda} \Big|\cdot\big| \overline{w}_{\varepsilon,1}\big|\\=&
\int_{\Omega} \Big| \sum^k_{l=1}\big(PU_{x^{(1)}_{l,\varepsilon},\lambda^{(1)}_{l,\varepsilon}} -
 PU_{x^{(2)}_{l,\varepsilon},\lambda^{(2)}_{l,\varepsilon}} \big) \Big|^{2^*-2}\cdot\Big|\alpha_{\varepsilon,i,0}\frac{\partial PU_{x^{(1)}_{i,\varepsilon},\lambda^{(1)}_{i,\varepsilon}}}
 {\partial \lambda} \Big|\cdot\big| \overline{w}_{\varepsilon,1}\big|\\=&
 O\left(\sum^k_{i=1} \big|\alpha_{\varepsilon,i,0} \big|\cdot\Big(|\lambda^{(1)}_{i,\varepsilon}-
 \lambda^{(2)}_{i,\varepsilon}|^{2^*-2}
 \int_{\Omega}  \Big|\frac{\partial PU_{x^{(1)}_{i,\varepsilon},\lambda^{(1)}_{i,\varepsilon}}}
 {\partial \lambda} \Big|^{2^*-1} \big|\overline{w}_{\varepsilon,1}\big|\Big)\right)\\&+
 O\left(\sum^k_{i=1} \big|\alpha_{\varepsilon,i,0} \big|\cdot\Big(|x^{(1)}_{i,\varepsilon}-x^{(2)}_{i,\varepsilon}|^{2^*-2}
 \int_{\Omega}  \Big|\frac{\partial PU_{x^{(1)}_{i,\varepsilon},\lambda^{(1)}_{i,\varepsilon}}}
 {\partial \lambda} \Big| \cdot \Big|\frac{\partial PU_{x^{(1)}_{i,\varepsilon},\lambda^{(1)}_{i,\varepsilon}}}
 {\partial x_j} \Big|^{2^*-2} \big|\overline{w}_{\varepsilon,1}\big|\Big)\right).
 \end{split}\end{equation}
By H\"older's inequality, we have
\begin{equation}\label{07-11-4}
\int_{\Omega}  \Big|\frac{\partial PU_{x^{(1)}_{i,\varepsilon},\lambda^{(1)}_{i,\varepsilon}}}
 {\partial \lambda} \Big|^{2^*-1}   \big| \overline{w}_{\varepsilon,1} \big|
 =O\Big(\frac{1}{\big(\lambda^{(1)}_{i,\varepsilon}\big)^{2^*-1}}
 \big\|PU_{x^{(1)}_{i,\varepsilon},\lambda^{(1)}_{i,\varepsilon}}\big\|^{2^*-1} \big\| \overline{w}_{\varepsilon,1} \big\| \Big)
 =O\Big(\frac{1}{\bar{\lambda}^{2^*-1}_{\varepsilon}}\|\overline{w}_{\varepsilon,1}\|\Big),
 \end{equation}
 and
\begin{equation}\label{07-11-5}
\begin{split}
\int_{\Omega}  &\Big|\frac{\partial PU_{x^{(1)}_{i,\varepsilon},\lambda^{(1)}_{i,\varepsilon}}}
 {\partial \lambda} \Big| \cdot \Big|\frac{\partial PU_{x^{(1)}_{i,\varepsilon},\lambda^{(1)}_{i,\varepsilon}}}
 {\partial x_j} \Big|^{2^*-2} \big| \overline{w}_{\varepsilon,1} \big|
 \\=& O\Big(\frac{1}{\big(\lambda^{(1)}_{i,\varepsilon}\big)^{3-2^*}}
 \big\|PU_{x^{(1)}_{i,\varepsilon},\lambda^{(1)}_{i,\varepsilon}}\big\|^{2^*-1} \big\| \overline{w}_{\varepsilon,1} \big\| \Big)
 =O\Big(\frac{1}{\bar{\lambda}^{3-2^*}_{\varepsilon}}\|\overline{w}_{\varepsilon,1}\|\Big).
 \end{split}
 \end{equation}
Hence using \eqref{clp-03}, \eqref{8-1-5} and \eqref{07-11-11}--\eqref{07-11-5}, we can deduce
\begin{equation}\label{07-11-1}
\begin{split}
\int_{\Omega}&\big|W(x)\big|\cdot \Big|\alpha_{\varepsilon,i,0}\frac{\partial PU_{x^{(1)}_{i,\varepsilon},\lambda^{(1)}_{i,\varepsilon}}}
 {\partial \lambda} \Big|\cdot \big| \overline{w}_{\varepsilon,1}\big|
 =O\Big(\frac{1}{\bar{\lambda}^{2^*-1}_{\varepsilon}}\|w^{(2)}_{\varepsilon}
\|\cdot\|\overline{w}_{\varepsilon,1}\|\Big).
 \end{split}
\end{equation}
And similar to the estimate of \eqref{07-11-1}, we can also find
\begin{equation}\label{07-11-2}
\begin{split}
 \int_{\Omega}\big|W(x)\big|\cdot \Big|\sum_{j=1}^N\alpha_{\varepsilon,i,j}
 \frac{\partial PU_{x^{(1)}_{i,\varepsilon},\lambda^{(1)}_{i,\varepsilon}}}
 {\partial x_j}\Big|\cdot\big|\overline{w}_{\varepsilon,1}  \big| =O\Big(\frac{1}{\bar{\lambda}^{2^*-1}_{\varepsilon}}\|w^{(2)}_{\varepsilon}
\|\cdot\|\overline{w}_{\varepsilon,1}\|\Big).
 \end{split}
\end{equation}
Next using \eqref{8-1-5} and H\"older's inequality, we have
\begin{equation}\label{07-11-3}
\begin{split}
\varepsilon\int_{\Omega} \Big|\alpha_{\varepsilon,i,0}\frac{\partial PU_{x^{(1)}_{i,\varepsilon},\lambda^{(1)}_{i,\varepsilon}}}
 {\partial \lambda}+\sum_{j=1}^N\alpha_{\varepsilon,i,j}
 \frac{\partial PU_{x^{(1)}_{i,\varepsilon},\lambda^{(1)}_{i,\varepsilon}}}
 {\partial x_j}\Big|\cdot\big|\overline{w}_{\varepsilon,1}\big|=O\Big(\frac{\varepsilon}
 {\bar{\lambda}_{\varepsilon}}  \|w^{(2)}_{\varepsilon}
\|\cdot   \|\overline{w}_{\varepsilon,1}\|\Big).
 \end{split}
\end{equation}
Hence from \eqref{1-15-8} and \eqref{07-11-1}--\eqref{07-11-3}, we find
 \begin{equation}\label{a1-15-8}
\begin{split}
 \left\langle  Q_{\varepsilon}\,\Big(\alpha_{\varepsilon,i,0}\frac{\partial PU_{x^{(1)}_{i,\varepsilon},\lambda^{(1)}_{i,\varepsilon}}}
 {\partial \lambda}+\sum_{j=1}^N\alpha_{\varepsilon,i,j}
 \frac{\partial PU_{x^{(1)}_{i,\varepsilon},\lambda^{(1)}_{i,\varepsilon}}}
 {\partial x_j}\Big),\overline{w}_{\varepsilon,1}\right\rangle=
O\Big(\frac{1}{\bar{\lambda}^{2^*-1}_{\varepsilon}}\|w^{(2)}_{\varepsilon}\|\Big).
\end{split}
 \end{equation}
 From $ \overline{w}_{\varepsilon,1}\in \displaystyle\bigcap^k_{j=1}E_{x^{(1)}_{j,\varepsilon},\lambda^{(1)}_{j,\varepsilon}}$ and \eqref{a1-15-8}, we obtain
\begin{equation}\label{2-15-8}
\begin{split}
\Big \langle Q_{\varepsilon}\overline{w}_{\varepsilon}, \overline{w}_{\varepsilon,1}\Big \rangle = &
    \Big \langle Q_{\varepsilon} \overline{w}_{\varepsilon,1},\overline{w}_{\varepsilon,1} \Big \rangle+ \left\langle Q_{\varepsilon}\,\Big(\alpha_{\varepsilon,i,0}\frac{\partial PU_{x^{(1)}_{i,\varepsilon},\lambda^{(1)}_{i,\varepsilon}}}
 {\partial \lambda}+\sum_{j=1}^N\alpha_{\varepsilon,i,j}
 \frac{\partial PU_{x^{(1)}_{i,\varepsilon},\lambda^{(1)}_{i,\varepsilon}}}
 {\partial x_j}\Big),\overline{w}_{\varepsilon,1}\right\rangle\\
 \ge & \rho\|\overline{w}_{\varepsilon,1}\|^2-
 \frac{C}{\bar{\lambda}^{2^*-1}_{\varepsilon}}\|w^{(2)}_{\varepsilon}
\|\cdot\|\overline{w}_{\varepsilon,1}\|.
\end{split}
 \end{equation}
Also, we have
\begin{equation}\label{3-15-8}
\Big \langle R_\varepsilon^{(1)}\big(w^{(1)}_{\varepsilon}\big)-R_\varepsilon^{(2)}\big(w^{(2)}_{\varepsilon}\big), \overline{w}_{\varepsilon,1}\Big \rangle
=O\Big( \big\|\sum^2_{l=1}R_\varepsilon^{(l)}\big(w^{(l)}_{\varepsilon}\big)\big\|
\cdot\|\overline{w}_{\varepsilon,1}\|\Big) =O\Big(\sum^2_{l=1} \|  w^{(l)}_{\varepsilon}\|^{\frac{N+2}{N-2}}\cdot\|\overline{w}_{\varepsilon,1}\|\Big).
 \end{equation}
And
 \begin{equation}\label{4-15-8}
\Big \langle l_{\varepsilon}, \overline{w}_{\varepsilon,1}\Big \rangle=O\Big(
{\bar{\lambda}_\varepsilon}\sum^k_{i=1}{\big|x_{i,\varepsilon}^{(1)}-x^{(2)}_{i,\varepsilon}\big|}
 +\sum^k_{i=1}\frac{\big|\lambda_{i,\varepsilon}^{(1)}-\lambda^{(2)}_{i,\varepsilon}\big|}{\bar{\lambda}_\varepsilon}\Big)
\|w^{(2)}_{\varepsilon}\|\cdot\|\overline{w}_{\varepsilon,1}\|.
 \end{equation}
Combining  \eqref{clp-03} and \eqref{2-15-8}--\eqref{4-15-8}, we obtain
 \begin{equation}\label{abc4-15-8}
\|\overline{w}_{\varepsilon,1}\| =O\Big(\sum^2_{l=1} \|  w^{(l)}_{\varepsilon}\|^{\frac{N+2}{N-2}}\Big)+O\Big(
\frac{\|w^{(2)}_{\varepsilon}\|}{\bar{\lambda}_\varepsilon}
\Big).
 \end{equation}
Then from \eqref{8-1-5} and \eqref{abc4-15-8}, we see
 \begin{equation*}
 \begin{split}
\|\overline{w}_{\varepsilon}\|=&O\left( \|\overline{w}_{\varepsilon,1 }\|+\Big\|\sum^k_{i=1}\Big(\alpha_{\varepsilon,i,0}\frac{\partial PU_{x^{(1)}_{j,\varepsilon},\lambda^{(1)}_{j,\varepsilon}}}
 {\partial \lambda}+\sum_{j=1}^N\alpha_{\varepsilon,i,j}
 \frac{\partial PU_{x^{(1)}_{j,\varepsilon},\lambda^{(1)}_{j,\varepsilon}}}
 {\partial x_j}\Big)\Big\|\right)\\ =& O\Big(\sum^2_{l=1} \|  w^{(l)}_{\varepsilon}\|^{\frac{N+2}{N-2}}\Big)  +O\Big(
\frac{\|w^{(2)}_{\varepsilon}\|}{\bar{\lambda}_\varepsilon}
\Big) +
O\left(\frac{1}{\bar{\lambda}_\varepsilon} \Big(\sum^k_{i=1}\big|\alpha_{\varepsilon,i,0}\big|+ \sum_{i=1}^k\sum_{j=1}^N \bar{\lambda}^2_\varepsilon \big|\alpha_{\varepsilon,i,j}\big|\Big)\right)\\=&
O\Big(\sum^2_{l=1} \|  w^{(l)}_{\varepsilon}\|^{\frac{N+2}{N-2}}\Big)  +O\Big(
\frac{\|w^{(2)}_{\varepsilon}\|}{\bar{\lambda}_\varepsilon}
\Big),
  \end{split}
 \end{equation*}
which and \eqref{lp21} give \eqref{ddlp21}.
\end{proof}
\begin{Prop}\label{aaprop-A.2}
For $N\geq 6$ and  $j=1,\cdots,k$, it holds
\begin{equation}\label{aaaclp-03}
\big|x^{(1)}_{j,\varepsilon}-x^{(2)}_{j,\varepsilon}\big|=o\Big(\frac{1}{\bar \lambda^2_{\varepsilon}}\Big)~\mbox{and}~\big|\lambda^{(1)}_{j,\varepsilon}-
\lambda^{(2)}_{j,\varepsilon}\big|=o\Big(\frac{1}{\bar\lambda^2_{\varepsilon}}\Big).
\end{equation}
\end{Prop}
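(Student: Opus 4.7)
The plan is to parallel the Pohozaev-identity arguments of Propositions 2.3 and 2.4, but applied simultaneously to the two solutions $u^{(1)}_{\varepsilon}$ and $u^{(2)}_{\varepsilon}$ and then subtracted. This produces, to leading order, a linear system in the differences $y_{j,\varepsilon}:=x^{(1)}_{j,\varepsilon}-x^{(2)}_{j,\varepsilon}$ and $\nu_{j,\varepsilon}:=\Lambda^{(1)}_{j,\varepsilon}-\Lambda^{(2)}_{j,\varepsilon}$ whose coefficient matrix is (up to $o(1)$ corrections) the Hessian $\nabla^2\Psi_k(a^k,\Lambda^k)$, invertible by the nondegeneracy hypothesis. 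The crucial improvement is that the right-hand side, which was only $O(\bar\lambda_\varepsilon^{-2})$ in the single-solution argument leading to \eqref{clp-01}, becomes $o(\bar\lambda_\varepsilon^{-2})$ for the difference thanks to the sharper estimate $\|w^{(1)}_\varepsilon-w^{(2)}_\varepsilon\|=o(\bar\lambda_\varepsilon^{-(N+2)/2})$ from Proposition 3.2.

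First, I would upgrade the far-field expansion \eqref{aclp-2} to the difference $\eta_\varepsilon:=u^{(1)}_\varepsilon-u^{(2)}_\varepsilon$. Writing $\eta_\varepsilon$ via Green's representation and repeating the computation of Proposition 2.2, with the single-solution bound $\|w^{(l)}_\varepsilon\|=O(\bar\lambda_\varepsilon^{-(N+2)/2})$ replaced by the difference bound $\|w^{(1)}_\varepsilon-w^{(2)}_\varepsilon\|=o(\bar\lambda_\varepsilon^{-(N+2)/2})$, one obtains, uniformly in $C^1\bigl(\Omega\setminus\bigcup_j B_{2d}(x^{(1)}_{j,\varepsilon})\bigr)$,
\begin{equation*}
\eta_\varepsilon(x) = A\sum_{j=1}^k\left[\frac{G(x^{(1)}_{j,\varepsilon},x)}{(\lambda^{(1)}_{j,\varepsilon})^{(N-2)/2}} - \frac{G(x^{(2)}_{j,\varepsilon},x)}{(\lambda^{(2)}_{j,\varepsilon})^{(N-2)/2}}\right] + o\bigl(\bar\lambda_\varepsilon^{-(N+2)/2}\bigr).
\end{equation*}

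Next, I apply \eqref{clp-1} and \eqref{clp-10} on $\Omega'=B_\theta(x^{(1)}_{j,\varepsilon})$ to both $u^{(1)}_\varepsilon$ and $u^{(2)}_\varepsilon$ and subtract. Substituting the sharpened expansion above into the surface integrals, using the quadratic forms $Q$ and $P$ from Propositions 2.3 and 2.4 together with the values \eqref{luo1} and \eqref{luo2} on pairs of Green's functions, and Taylor expanding $R$, $G$, $\partial R$, $\partial G$, $\Lambda^{N-2}$, and $\Lambda^{(N-2)/2}\Lambda^{(N-2)/2}$ around $(a^k,\Lambda^k)$, the two systems of identities collapse to the single linearized relation
\begin{equation*}
\nabla^2\Psi_k(a^k,\Lambda^k)\begin{pmatrix}y_\varepsilon\\ \nu_\varepsilon\end{pmatrix} = o\bigl(\bar\lambda_\varepsilon^{-2}\bigr),
\end{equation*}
where $y_\varepsilon=(y_{1,\varepsilon},\ldots,y_{k,\varepsilon})$ and $\nu_\varepsilon=(\nu_{1,\varepsilon},\ldots,\nu_{k,\varepsilon})$. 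Nondegeneracy yields $|y_\varepsilon|+|\nu_\varepsilon|=o(\bar\lambda_\varepsilon^{-2})$, which gives the $x$-estimate of \eqref{aaaclp-03} directly, and, after converting back through $\lambda^{(l)}_{j,\varepsilon}=(\varepsilon^{1/(N-4)}\Lambda^{(l)}_{j,\varepsilon})^{-1}$ together with \eqref{4-16-1}, the $\lambda$-estimate as well.

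The main obstacle lies in the subtraction step: one must show that the Pohozaev remainder is genuinely $o(\bar\lambda_\varepsilon^{-2})$ rather than merely $O(\bar\lambda_\varepsilon^{-2})$. This requires tracing each surface integral through the subtraction, invoking \eqref{ddlp21} exactly where $w^{(l)}_\varepsilon$ appears, and carefully controlling the cross terms $w^{(l)}_\varepsilon \cdot \nabla PU_{x^{(l)}_{j,\varepsilon},\lambda^{(l)}_{j,\varepsilon}}$ that are not present in the single-solution case. One must also check that the $k$ equations indexed by $j$ combine into the full Hessian of $\Psi_k$ — including the off-diagonal blocks produced by the interaction terms $G(a_j,a_l)$ for $l\neq j$ — rather than into $k$ uncoupled block pieces; this is what makes the single nondegeneracy hypothesis at $(a^k,\Lambda^k)$ sufficient to invert, and it is where the Green's-function identities of Section 5 are indispensable.
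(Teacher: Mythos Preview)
Your proposal is correct and follows essentially the same route as the paper: apply the Pohozaev identities \eqref{clp-1} and \eqref{clp-10} to $u^{(1)}_\varepsilon$ and $u^{(2)}_\varepsilon$ on the common ball $B_\theta(x^{(1)}_{j,\varepsilon})$, subtract, use the Green's-function values \eqref{luo1}--\eqref{luo2} together with the difference bound \eqref{ddlp21} to obtain $\nabla\Psi_k(x^{(1)}_\varepsilon,\Lambda^{(1)}_\varepsilon)-\nabla\Psi_k(x^{(2)}_\varepsilon,\Lambda^{(2)}_\varepsilon)=o(\bar\lambda_\varepsilon^{-2})$, and then invoke the nondegeneracy of $\Psi_k$ at $(a^k,\Lambda^k)$. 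The paper records the conclusion as \eqref{aaaaaclp-01}--\eqref{aaaabclp-01} before inverting, which is exactly your linearized system written at the level of $\nabla\Psi_k$ rather than the Hessian.
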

\begin{proof}
First for $N\geq 6$, from \eqref{clp-03}, we know $\big|x^{(1)}_{j,\varepsilon}-x^{(2)}_{j,\varepsilon}\big|=O\Big(\frac{1}{\bar \lambda^2_{\varepsilon}}\Big)$. Also by direct calculations, we find
\begin{equation*}
 PU_{x^{(l)}_{j,\varepsilon},\lambda^{(l)}_{j,\varepsilon}}(y)
 =\frac{G\big(x^{(l)}_{j,\varepsilon},y\big)}{(\lambda^{(l)}_{j,\varepsilon})^{\frac{N-2}{2}}}
 +O\big(\frac{1}{\bar{\lambda}_{\varepsilon}^{\frac{N+2}{2}}}\big),~~\mbox{in}~\Omega\setminus B_\theta(x^{(l)}_{j,\varepsilon}).
\end{equation*}
Now we define the following quadratic form
\begin{equation*}
Q_1(u,v)=-\int_{\partial B_\theta(x^{(1)}_{j,\varepsilon})}\frac{\partial v}{\partial \nu^{(1)}}\frac{\partial u}{\partial x_i}-
\int_{\partial B_\theta(x^{(1)}_{j,\varepsilon})}\frac{\partial u}{\partial \nu^{(1)}}\frac{\partial v}{\partial x_i}
+\int_{\partial B_\theta(x^{(1)}_{j,\varepsilon})}\big\langle \nabla u,\nabla v \big\rangle \nu^{(1)}_i.
\end{equation*}
where $\nu^{(1)}(x)=\big(\nu^{(1)}_{1}(x),\cdots,\nu^{(1)}_N(x)\big)$ is the outward unit normal of $\partial B_{\theta}(x^{(1)}_{j,\varepsilon})$.
Note that if $u$ and $v$ are harmonic in $ B_d(x^{(1)}_{j,\varepsilon})\backslash \{x^{(1)}_{j,\varepsilon}\}$, then $Q_1(u,v)$ is independent of $\theta\in (0,d]$.
 Let $u_\varepsilon=u^{(q)}_\varepsilon$  with $q=1,2$ and
 $\Omega'=B_{\theta}(x^{(1)}_{j,\varepsilon})$ in \eqref{clp-1}. Then from  \eqref{clp-2},  we have
\begin{equation*}
\begin{split}
 \sum^k_{l=1}\sum^k_{m=1}&\frac{Q_1\big(G(x^{(1)}_{m,\varepsilon},x),G(x^{(1)}_{l,\varepsilon},x)\big)}{
 (\lambda^{(1)}_{m,\varepsilon}) ^{(N-2)/2}
 (\lambda^{(1)}_{l,\varepsilon})^{(N-2)/2}} - \sum^k_{l=1}\sum^k_{m=1}\frac{Q_1\big(G(x^{(2)}_{m,\varepsilon},x),G(x^{(2)}_{l,\varepsilon},x)\big)}{
 (\lambda^{(2)}_{m,\varepsilon}) ^{(N-2)/2}
 (\lambda^{(2)}_{l,\varepsilon})^{(N-2)/2}}\\=&
 - \sum^k_{m=1}\frac{Q_1\big(G(x^{(1)}_{m,\varepsilon},x),w_\varepsilon^{(2)}\big)}{
 (\lambda^{(1)}_{m,\varepsilon})^{(N-2)/2} }
 - \sum^k_{m=1}\frac{Q_1\big(w_\varepsilon^{(1)},G(x^{(2)}_{m,\varepsilon},x)\big)}{
 (\lambda^{(2)}_{m,\varepsilon})^{(N-2)/2} }\\&
 +\int_{\partial B_\theta\big(x^{(1)}_{j,\varepsilon}\big)} \Big[\Big(\sum^k_{j=1}
 PU_{x^{(1)}_{j,\varepsilon},\lambda^{(1)}_{j,\varepsilon}}\Big)^{2^*}-
\Big(\sum^k_{j=1}PU_{x^{(2)}_{j,\varepsilon},\lambda^{(2)}_{j,\varepsilon}}\Big)^{2^*}\Big]
 +o\Big(\frac{1}{\bar\lambda^N_{\varepsilon}}\Big)
 \\=&
O\Big(\frac{\|w_\varepsilon^{(1)}-w_\varepsilon^{(2)}\|}{\bar\lambda^{(N-2)/2}_{\varepsilon}}\Big)
 +o\Big(\frac{1}{\bar\lambda^N_{\varepsilon}}\Big)= o\Big(\frac{1}{\bar\lambda^N_{\varepsilon}}\Big).
\end{split}\end{equation*}
Let $\Lambda^{(q)}_{j,\varepsilon}:=\Big(\varepsilon ^{\frac{1}{N-4}}\lambda^{(q)}_{j,\varepsilon}\Big)^{-1}$ with $q=1,2$.
Then similar to \eqref{clp-01}, we find
\begin{equation}\label{aaaaaclp-01}
\nabla_x\Psi_k(x,\lambda)\big|_{(x,\lambda)
=(x^{(1)}_\varepsilon,\Lambda^{(1)}_\varepsilon)}-
\nabla_x\Psi_k(x,\lambda)\big|_{(x,\lambda)
=(x^{(2)}_\varepsilon,\Lambda^{(2)}_\varepsilon)}=
o\Big(\frac{1}{\bar{\lambda}_\varepsilon^{2}}\Big),
\end{equation}
where $x^{(1)}_\varepsilon=\big(x^{(1)}_{1,\varepsilon},
\cdots,x^{(1)}_{k,\varepsilon}\big)$ and $\Lambda^{(1)}_\varepsilon=\big(\Lambda^{(1)}_{1,\varepsilon},
\cdots,\Lambda^{(1)}_{k,\varepsilon}\big)$.
And similar to the estimate of \eqref{aaaaaclp-01}, we conclude
\begin{equation}\label{aaaabclp-01}
\nabla_\lambda\Psi_k(x,\lambda)\big|_{(x,\lambda)
=(x^{(1)}_\varepsilon,\Lambda^{(1)}_\varepsilon)}-
\nabla_\lambda\Psi_k(x,\lambda)\big|_{(x,\lambda)
=(x^{(2)}_\varepsilon,\Lambda^{(2)}_\varepsilon)}=
o\Big(\frac{1}{\bar{\lambda}_\varepsilon^{2}}\Big).
\end{equation}
Then \eqref{aaaclp-03} follows by  \eqref{aaaaaclp-01} and \eqref{aaaabclp-01}.
\end{proof}
Now we set
\begin{equation}\label{3.1}
\xi_{\varepsilon}(x)=\frac{u_{\varepsilon}^{(1)}(x)-u_{\varepsilon}^{(2)}(x)}
{\|u_{\varepsilon}^{(1)}-u_{\varepsilon}^{(2)}\|_{L^{\infty}(\Omega)}},
\end{equation}
then $\xi_{\varepsilon}(x)$ satisfies $\|\xi_{\varepsilon}\|_{L^{\infty}(\Omega)}=1$ and
\begin{equation}\label{3.2}
- \Delta \xi_{\varepsilon}(x)=C_{\varepsilon}(x)\xi_{\varepsilon}(x)+\varepsilon\xi_{\varepsilon}(x),
\end{equation}
where
\begin{equation*}
C_{\varepsilon}(x)=\Big(\frac{N+2}{N-2}\Big)\int_{0}^1
\Big(tu_{\varepsilon}^{(1)}(x)+(1-t)u_{\varepsilon}^{(2)}(x)\Big)
^{\frac{4}{N-2}}dt.
\end{equation*}

\begin{Lem}\label{lem-A-1}
For any constant $0<\sigma\leq N-2$, there is a constant $C>0$, such that
 \begin{equation}\label{AB.2}
   \int_{\R^N}\frac{1}{|y-z|^{N-2}}\frac{1}{(1+|z|)^{2+\sigma}}dz\leq
   \begin{cases}
  {C\big(1+|y|\big)^{-\sigma}},~& \sigma< N-2,\\[1.5mm]
    {C \big |\ln |y| \big| \big(1+|y|\big)^{-\sigma}},~& \sigma=N-2.
   \end{cases}
 \end{equation}
\end{Lem}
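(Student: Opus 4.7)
The plan is to establish the bound via a standard three-region decomposition of $\R^N$ based on the two natural scales in the integrand: the singular scale near $z=y$ and the concentration scale near $z=0$.

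First I would dispose of the trivial case $|y|\le 1$. Here $(1+|y|)^{-\sigma}$ is bounded below by a positive constant, so it suffices to show the left-hand side is uniformly bounded. Integrability near $z=y$ is assured because $N-2<N$, and integrability at infinity follows from the tail decay $|z|^{-N-\sigma}$ together with the assumption $\sigma>0$.

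For $|y|\ge 1$ I would write $\R^N=A_1\cup A_2\cup A_3$ with $A_1=\{|z|\le |y|/2\}$, $A_2=\{|z-y|\le |y|/2\}$, and $A_3$ the complement. On $A_1$ one has $|y-z|\ge |y|/2$, so the contribution is bounded by
\[
C|y|^{-(N-2)}\int_{|z|\le |y|/2}(1+|z|)^{-(2+\sigma)}\,dz.
\]
A direct polar-coordinate computation yields a bound of order $|y|^{N-2-\sigma}$ when $\sigma<N-2$ and of order $\ln|y|$ when $\sigma=N-2$, which is precisely the source of the dichotomy in the lemma. On $A_2$ we have $|z|\ge |y|/2$, hence $(1+|z|)^{-(2+\sigma)}\le C|y|^{-(2+\sigma)}$, while $\int_{|z-y|\le |y|/2}|y-z|^{-(N-2)}\,dz\le C|y|^{2}$, producing the contribution $C|y|^{-\sigma}$. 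On $A_3$ I would split once more according to whether $|z|\le 2|y|$ or $|z|\ge 2|y|$: in the first sub-region $|z|\sim|y|$ and the integration range of $|z-y|$ is contained in an annulus of radii comparable to $|y|$, again giving $C|y|^{-\sigma}$, whereas in the second $|y-z|\ge|z|/2$ and the integrand decays like $|z|^{-N-\sigma}$, whose tail integral over $\{|z|\ge 2|y|\}$ is $C|y|^{-\sigma}$ exactly because $\sigma>0$.

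The only delicate point is the threshold $\sigma=N-2$, where the contribution of $A_1$ refuses to remain a power of $|y|$ and instead picks up a logarithmic factor, coming from $\int_0^{|y|/2}r^{N-1}(1+r)^{-N}\,dr\sim \ln|y|$. Every other region yields clean power bounds that are of the same order $|y|^{-\sigma}$ (hence absorbable), so combining the four pieces produces the stated uniform estimate in both cases. No cancellation or fine structure is required; the only care needed is in bookkeeping the matching exponents so that the $A_1$ contribution always dominates (or ties with) the others.
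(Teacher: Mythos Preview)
Your argument is the standard three-region decomposition and is correct; the paper itself does not prove the lemma but simply cites Lemma~B.2 of Wei--Yan~\cite{Wei}, so your self-contained treatment already goes beyond what appears here. One cosmetic point worth flagging: in the regime $|y|\le 1$ with $\sigma=N-2$, a uniform bound on the integral does not by itself yield the inequality as literally stated, since the factor $|\ln|y||$ vanishes at $|y|=1$; this is a defect of the statement (the usual formulation has $1+|\ln|y||$ or $\ln(2+|y|)$ on the right) rather than of your method, and in the paper's applications only the large-$|y|$ asymptotics are actually used.
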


\begin{proof}
See Lemma B.2 in \cite{Wei}.
\end{proof}

\begin{Prop}\label{prop3.1}
For $\xi_{\varepsilon}(x)$ defined by \eqref{3.1}, we have
\begin{equation}\label{3-3}
\int_{\Omega}{\xi}_\varepsilon(x)dx=O\Big(\frac{\ln \bar{\lambda}_\varepsilon}{\bar{\lambda}^{N-2}_\varepsilon}\Big)~\mbox{and}~ \xi_{\varepsilon}(x) =O\Big(\frac{\ln \bar{\lambda}_\varepsilon}{\bar{\lambda}^{N-2}_\varepsilon}\Big),~\mbox{in}~ \Omega\backslash\bigcup_{j=1}^k B_{d}(x_{j,\varepsilon}^{(1)}),
\end{equation}
where $d>0$ is any small fixed constant.
\end{Prop}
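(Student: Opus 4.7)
The strategy is to exploit that $\xi_\varepsilon$ satisfies the linear equation \eqref{3.2} with vanishing boundary values, combined with the bubble ansatz and Lemma~\ref{lem-A-1}. The key starting point is the Green's function representation
$$\xi_\varepsilon(x) = \int_\Omega G(y,x)\bigl[C_\varepsilon(y) + \varepsilon\bigr]\xi_\varepsilon(y)\,dy,$$
from which I plan to extract the decay $O(\ln\bar\lambda_\varepsilon / \bar\lambda_\varepsilon^{N-2})$ by first controlling $\xi_\varepsilon$ in rescaled coordinates near each blow-up point, and then estimating the resulting integral.

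First I would analyze $\xi_\varepsilon$ near each $x_{j,\varepsilon}^{(1)}$. Setting $\tilde\xi_j(z) := \xi_\varepsilon\bigl(x_{j,\varepsilon}^{(1)} + z/\lambda_{j,\varepsilon}^{(1)}\bigr)$, the normalization $\|\xi_\varepsilon\|_{L^\infty}=1$ gives $\|\tilde\xi_j\|_{L^\infty} \le 1$, and a short computation shows that $\tilde\xi_j$ satisfies a linearized equation of the form
$$-\Delta \tilde\xi_j = \frac{C_N^{4/(N-2)}}{(1+|z|^2)^2}\,\tilde\xi_j + O\bigl(\bar\lambda_\varepsilon^{-2}\bigr)\tilde\xi_j$$
on the expanding domain $\lambda_{j,\varepsilon}^{(1)}(\Omega - x_{j,\varepsilon}^{(1)})$, with zero Dirichlet boundary values. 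Representing $\tilde\xi_j$ via the corresponding Green's function and applying Lemma~\ref{lem-A-1} iteratively, one starts with the trivial bound $|\tilde\xi_j|\le 1$, then improves the decay exponent by $2$ at each step (first $\sigma=2$, then $\sigma=4$, etc.), finally reaching the borderline case $\sigma=N-2$ to conclude
$$|\tilde\xi_j(z)| \le \frac{C\,\ln(2+|z|)}{(1+|z|)^{N-2}},$$
the logarithmic factor being exactly the one predicted by Lemma~\ref{lem-A-1} in the critical case.

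For the pointwise bound on $\Omega \setminus \bigcup_j B_d(x_{j,\varepsilon}^{(1)})$, I would split
$$\xi_\varepsilon(x) = \sum_j \int_{B_{d/2}(x_{j,\varepsilon}^{(1)})} G(y,x)(C_\varepsilon + \varepsilon)\xi_\varepsilon\,dy + \int_{\Omega \setminus \bigcup_j B_{d/2}(x_{j,\varepsilon}^{(1)})} G(y,x)(C_\varepsilon + \varepsilon)\xi_\varepsilon\,dy.$$
For the near-bubble pieces $G(y,x)$ is uniformly bounded, so changing variables $z = \lambda_{j,\varepsilon}^{(1)}(y - x_{j,\varepsilon}^{(1)})$ and using the decay of $\tilde\xi_j$ together with $C_\varepsilon(y)\approx C_N^{4/(N-2)}\lambda_j^2/(1+\lambda_j^2|y-x_j|^2)^2$ gives a contribution of order $O(\ln\bar\lambda_\varepsilon/\bar\lambda_\varepsilon^{N-2})$. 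For the far piece, Proposition~\ref{prop2.1} yields $C_\varepsilon(y) \le C/\bar\lambda_\varepsilon^2$ and $\varepsilon \le C/\bar\lambda_\varepsilon^{N-4}$, so writing $M := \|\xi_\varepsilon\|_{L^\infty(\Omega \setminus \bigcup_j B_{d/2}(x_j^{(1)}))}$ one obtains an inequality of the form $M \le C\ln\bar\lambda_\varepsilon/\bar\lambda_\varepsilon^{N-2} + C M/\bar\lambda_\varepsilon^2$, which for large $\bar\lambda_\varepsilon$ closes to give the claimed pointwise bound. The integral estimate then follows by splitting $\int_\Omega\xi_\varepsilon$ into the bubble balls (where integration of the rescaled decay against $\lambda_j^{-N}dz$ yields $O(\ln\bar\lambda_\varepsilon/\bar\lambda_\varepsilon^{N-2})$) and the complement (directly controlled by the pointwise bound).

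The main technical obstacle is the bootstrap in the rescaled picture: one has to justify the Green's function representation on the expanding domain, control the lower-order perturbations coming from the bubble approximation error and the $\varepsilon\tilde\xi_j$ term, and ensure that the iteration actually closes exactly at the borderline $\sigma = N-2$ of Lemma~\ref{lem-A-1}, producing the sharp $\ln\bar\lambda_\varepsilon$ factor rather than a worse correction. Handling the boundary effects of the rescaled domain -- where the Green's function differs from the full-space Green's function by a regular part -- is what makes this step the delicate one.
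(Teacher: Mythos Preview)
Your plan is correct and uses exactly the same ingredients as the paper --- the Green's representation for $\xi_\varepsilon$ together with the convolution bound of Lemma~\ref{lem-A-1}, iterated until the borderline exponent $\sigma=N-2$ produces the $\ln\bar\lambda_\varepsilon$ factor. The difference is only in organization, and the paper's version is noticeably simpler: it never rescales. Starting from $|\xi_\varepsilon|\le 1$ and the global bound $C_\varepsilon(y)\le C\sum_{j,l}U^{4/(N-2)}_{x^{(l)}_{j,\varepsilon},\lambda^{(l)}_{j,\varepsilon}}(y)$, one feeds
\[
\xi_\varepsilon(x)=\int_\Omega G(y,x)\bigl[C_\varepsilon(y)+\varepsilon\bigr]\xi_\varepsilon(y)\,dy
\]
directly into Lemma~\ref{lem-A-1} in the original coordinates on $\Omega$, obtaining $|\xi_\varepsilon(x)|\le C\sum_{j,l}(1+\lambda^{(l)}_{j,\varepsilon}|x-x^{(l)}_{j,\varepsilon}|)^{-2}+O(\varepsilon)$, and then iterates this same step finitely many times to reach \eqref{l99}. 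Both conclusions in \eqref{3-3} follow immediately.

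What this buys you is precisely the elimination of the ``technical obstacle'' you flag: there is no expanding domain, no comparison of rescaled Green's functions with the full-space one, and no separate closure argument for the far region. The bootstrap, the pointwise bound away from the bubbles, and the integral bound are all consequences of the single global estimate \eqref{l99}. Your rescaling detour would work, but it manufactures difficulties that the direct argument avoids.
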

\begin{proof}
By the potential theory,  \eqref{3.2} and \eqref{AB.2},  we have
\begin{equation}\label{cc6}
\begin{split}
 {\xi}_\varepsilon(x)=& \int_{\Omega}G(y,x)
\big(C_{\varepsilon}(y)+\varepsilon\big)\xi_{\varepsilon}(y)dy\\=&
O\Big(\sum^k_{j=1}\sum^2_{l=1}\int_{\Omega}\frac{1}{|x-y|^{N-2}} U^{\frac{4}{N-2}}_{x^{(l)}_{j,\varepsilon},\lambda^{(l)}_{j,\varepsilon}}
(y)dy\Big)+O\big(\varepsilon\big)\\=&
O\Big(\sum^k_{j=1}\sum^2_{l=1} \frac{1}{\big(1+\lambda^{(l)}_{j,\varepsilon}|x-x^{(l)}_{j,\varepsilon}|\big)^{2}}\Big)+O\big(\varepsilon\big).
\end{split}
\end{equation}
Next repeating the above process, we know
\begin{equation*}
\begin{split}
{\xi}_\varepsilon(x) = &O\left(\int_{\Omega}\frac{1}{|x-y|^{N-2}}
\big(C_{\varepsilon}(y)+\varepsilon\big)\Big(\sum^k_{j=1}\sum^2_{l=1}
\frac{1}{\big(1+\lambda^{(l)}_{j,\varepsilon}|x-x^{(l)}_{j,\varepsilon}|\big)^{2}}
+\varepsilon\Big)dy\right)\\=&
O\Big(\sum^k_{j=1}\sum^2_{l=1}\frac{1}{\big(1+\lambda^{(l)}_{j,\varepsilon}
|x-x^{(l)}_{j,\varepsilon}|\big)^{4}}\Big)+
O\big(\varepsilon^2\big).
\end{split}
\end{equation*}
Then we can proceed as in the above argument for finite number of times to prove
\begin{equation}\label{l99}
{\xi}_\varepsilon(x) =
O\Big(\sum^k_{j=1}\sum^2_{l=1}\frac{\ln \bar{\lambda}_\varepsilon}{\big(1+\lambda^{(l)}_{j,\varepsilon}
|x-x^{(l)}_{j,\varepsilon}|\big)^{N-2}}\Big)+
O\big(\varepsilon^2\big).
\end{equation}
Hence \eqref{3-3} can be deduced by \eqref{l99}.
\end{proof}

\begin{Prop}\label{prop3-2}
For $N\geq 6$ and $j=1,\cdots, k$, let $\xi_{\varepsilon,j}(x)=\xi_{\varepsilon}(\frac{x}{\lambda^{(1)}_{j,\varepsilon}}+x_{j,\varepsilon}^{(1)})$. Then by taking
a subsequence if necessary, we have
\begin{equation}\label{cc7}
\Big|\xi_{\varepsilon,j}(x)-\sum_{i=0}^N c_{j,i}\psi_{i}(x)\Big|=o\big(\frac{1}{\bar \lambda_{\varepsilon}}\big),~\mbox{uniformly in}~C^1\big(B_R(0)\big) ~\mbox{for any}~R>0,
\end{equation}
 where $c_{j,i}$, $i=0,1,\cdots,N$ are some constants  and $$\psi_{0}(x)=\frac{\partial U_{0,\lambda}(x)}{\partial\lambda}\big|_{\lambda=1},~~\psi_{i}(x)=\frac{\partial U_{0,1}(x)}{\partial x_i},~i=1,\cdots,N.
$$
\end{Prop}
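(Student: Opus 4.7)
The plan is to rescale the equation satisfied by $\xi_\varepsilon$ around $x_{j,\varepsilon}^{(1)}$, extract a locally convergent subsequence, identify the limit via the classical nondegeneracy of the Aubin--Talenti bubble, and finally promote this to the quantitative rate $o(1/\bar{\lambda}_\varepsilon)$.

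Setting $y = x/\lambda_{j,\varepsilon}^{(1)} + x_{j,\varepsilon}^{(1)}$, the rescaled function $\xi_{\varepsilon,j}(x) = \xi_\varepsilon(y)$ satisfies
\begin{equation*}
-\Delta \xi_{\varepsilon,j}(x) = \tilde{C}_\varepsilon^{(j)}(x)\,\xi_{\varepsilon,j}(x) + \frac{\varepsilon}{(\lambda_{j,\varepsilon}^{(1)})^{2}}\,\xi_{\varepsilon,j}(x),
\end{equation*}
where $\tilde{C}_\varepsilon^{(j)}(x) = (\lambda_{j,\varepsilon}^{(1)})^{-2}\,C_\varepsilon(y) = \tfrac{N+2}{N-2}\int_0^1\big(t\,\hat{u}_{\varepsilon,j}^{(1)}+(1-t)\,\hat{u}_{\varepsilon,j}^{(2)}\big)^{4/(N-2)}dt$ and $\hat{u}_{\varepsilon,j}^{(l)}(x) := (\lambda_{j,\varepsilon}^{(1)})^{-(N-2)/2}u_\varepsilon^{(l)}(y)$. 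Using the preceding proposition (namely that $|x_{j,\varepsilon}^{(1)}-x_{j,\varepsilon}^{(2)}|$ and $|\lambda_{j,\varepsilon}^{(1)}-\lambda_{j,\varepsilon}^{(2)}|$ are $o(\bar{\lambda}_\varepsilon^{-2})$), the decomposition $u_\varepsilon^{(l)}=\sum_{i}PU_{x_{i,\varepsilon}^{(l)},\lambda_{i,\varepsilon}^{(l)}}+w_\varepsilon^{(l)}$ with $\|w_\varepsilon^{(l)}\|=o(1)$, and the standard expansion of $PU_{x,\lambda}$ near the bubble center, one checks that the rescaled bubble at position $j$ converges to $U_{0,1}$ while contributions of the other bubbles rescale to $O(\bar{\lambda}_\varepsilon^{-(N-2)})$. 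Hence $\tilde{C}_\varepsilon^{(j)}(x) = \tfrac{N+2}{N-2}U_{0,1}^{4/(N-2)}(x) + O(\bar{\lambda}_\varepsilon^{-(N-2)})$ uniformly on each fixed ball $B_R(0)$.

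Since $\|\xi_\varepsilon\|_{L^\infty(\Omega)}=1$, we have $\|\xi_{\varepsilon,j}\|_{L^\infty}\le 1$. Combined with the uniform boundedness of $\tilde{C}_\varepsilon^{(j)}$, interior $W^{2,p}$ estimates and Sobolev embedding yield $\|\xi_{\varepsilon,j}\|_{C^{1,\alpha}(B_R(0))}\le C_R$ for every $R>0$ and some $\alpha\in(0,1)$. A diagonal Arzel\`a--Ascoli argument then extracts a subsequence with $\xi_{\varepsilon,j}\to\xi_j^*$ in $C^1_{\mathrm{loc}}(\R^N)$, where $|\xi_j^*|\le 1$ and
\begin{equation*}
-\Delta \xi_j^* = \tfrac{N+2}{N-2}U_{0,1}^{4/(N-2)}\,\xi_j^* \quad \text{in } \R^N.
\end{equation*}
By the classical nondegeneracy of the Aubin--Talenti bubble, every bounded solution of this linearized equation lies in the span of $\psi_0,\psi_1,\ldots,\psi_N$, so $\xi_j^* = \sum_{i=0}^N c_{j,i}\psi_i$ for suitable constants $c_{j,i}$.

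To upgrade $o(1)$ to $o(1/\bar{\lambda}_\varepsilon)$, set $\eta_{\varepsilon,j}:=\xi_{\varepsilon,j}-\sum_{i=0}^N c_{j,i}\psi_i$. Subtracting the limit equation gives
\begin{equation*}
-\Delta \eta_{\varepsilon,j} = \tilde{C}_\varepsilon^{(j)}\eta_{\varepsilon,j} + \Big[\tilde{C}_\varepsilon^{(j)}-\tfrac{N+2}{N-2}U_{0,1}^{4/(N-2)}\Big]\sum_{i=0}^N c_{j,i}\psi_i + \tfrac{\varepsilon}{(\lambda_{j,\varepsilon}^{(1)})^{2}}\,\xi_{\varepsilon,j}.
\end{equation*}
For $N\ge 6$ and $\lambda_{j,\varepsilon}^{(1)}\sim \varepsilon^{-1/(N-4)}$, the coefficient-difference term and the $\varepsilon/\lambda^2$ term are both $o(1/\bar{\lambda}_\varepsilon)$ on any fixed $B_R(0)$. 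Using that $\eta_{\varepsilon,j}\to 0$ in $C^1_{\mathrm{loc}}$, together with the invertibility of $-\Delta - \tfrac{N+2}{N-2}U_{0,1}^{4/(N-2)}$ on the $L^2$-orthogonal complement of $\mathrm{span}(\psi_0,\ldots,\psi_N)$, one projects $\eta_{\varepsilon,j}$ onto this span (absorbing any kernel component into a perturbation of the $c_{j,i}$'s along the subsequence) and controls the transverse part via a weighted elliptic inversion driven by the small forcing, yielding the claim. The main obstacle is the careful tracking of all error contributions---in particular the cross-bubble interactions and the lower-order $\varepsilon u$ perturbation---together with the proper handling of the kernel in the quantitative inversion step.
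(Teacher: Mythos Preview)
Your approach is essentially the same as the paper's: rescale the equation for $\xi_\varepsilon$, use the closeness estimates $|x_{j,\varepsilon}^{(1)}-x_{j,\varepsilon}^{(2)}|,\ |\lambda_{j,\varepsilon}^{(1)}-\lambda_{j,\varepsilon}^{(2)}|=o(\bar\lambda_\varepsilon^{-2})$ from the preceding proposition to expand the coefficient, extract a limit by $C^{1,\alpha}$ compactness, and identify it via the nondegeneracy of $U_{0,1}$.

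One technical point deserves attention. Your uniform pointwise claim $\tilde C_\varepsilon^{(j)}=\tfrac{N+2}{N-2}U_{0,1}^{4/(N-2)}+O(\bar\lambda_\varepsilon^{-(N-2)})$ on $B_R(0)$ ignores the remainders $w_\varepsilon^{(l)}$, for which only $H^1$ bounds are available, not $L^\infty$ bounds with a rate. The paper deals with this by passing to the \emph{weak} formulation: testing against $\Phi\in C_0^\infty$, the contribution $\int |w_\varepsilon^{(l)}|^{4/(N-2)}|\xi_{\varepsilon,j}||\Phi|$ is controlled by $\|w_\varepsilon^{(l)}\|_{H^1}^{4/(N-2)}=o(1/\bar\lambda_\varepsilon)$ via H\"older and Sobolev, yielding the weak identity
\[
-\int\Delta\xi_{\varepsilon,j}\,\Phi=\tfrac{N+2}{N-2}\int U_{0,1}^{4/(N-2)}\xi_{\varepsilon,j}\,\Phi+o(1/\bar\lambda_\varepsilon).
\]
This is what actually drives the quantitative step (and is all that is used downstream, e.g.\ in computing $A_{\varepsilon,j}$ and $B_{\varepsilon,j,h}$). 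Your pointwise version of the quantitative bootstrap would need either a separate $L^\infty$ estimate on $\hat w_\varepsilon^{(l)}$ or should be recast in this weak sense; otherwise the argument is the same.
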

\begin{proof}
Since $\xi_{\varepsilon,j}(x)$ is bounded, by the regularity theory in \cite{Gilbarg}, we find
$$\xi_{\varepsilon,j}(x)\in {C^{1,\alpha}\big(B_r(0)\big)}~ \mbox{and}~  \|\xi_{\varepsilon,j}\|_{C^{1,\alpha}\big(B_r(0)\big)}\leq C,$$
for any fixed large $r$ and $\alpha \in (0,1)$ if $\varepsilon$ is small, where the constants $r$ and $C$ are independent of $\varepsilon$ and $j$.
So we may assume that $\xi_{\varepsilon,j}(x)\rightarrow\xi_{j}(x)$ in $C\big(B_r(0)\big)$. By direct calculations, we know
\begin{small}
\begin{equation}\label{tian41}
\begin{split}
-\Delta\xi_{\varepsilon,j}(x)=&-\frac{1}{(\lambda^{(1)}_{j,\varepsilon})^{2}}\Delta \xi_{\varepsilon}\big(\frac{x}{\lambda^{(1)}_{j,\varepsilon}}+x_{j,\varepsilon}^{(1)}\big)=
\frac{1}{(\lambda^{(1)}_{j,\varepsilon})^{2}}C_{\varepsilon}(\frac{x}{\lambda^{(1)}_{j,\varepsilon}}
+x_{j,\varepsilon}^{(1)})\xi_{\varepsilon,j}(x)+ \frac{\varepsilon }{(\lambda^{(1)}_{j,\varepsilon})^{2}}\xi_{\varepsilon,j}(x).
\end{split}
\end{equation}
\end{small}
Now, we estimate $C_{\varepsilon}(\varepsilon x+x_{j,\varepsilon}^{(1)})$. By \eqref{clp-03}, we have
\begin{equation*}
\begin{split}
U&_{x_{j,\varepsilon}^{(1)},\lambda_{j,\varepsilon}^{(1)}}(x)
-
U_{x_{j,\varepsilon}^{(2)},\lambda_{j,\varepsilon}^{(2)}}(x)
 \\=&
O\Big(\big|x_{j,\varepsilon}^{(1)}-x_{j,\varepsilon}^{(2)}\big|\cdot \big(\nabla_y U_{y,\lambda_{j,\varepsilon}^{(1)}}(x)|_{y=x_{j,\varepsilon}^{(1)}}\big)+
\big|\lambda_{j,\varepsilon}^{(1)}-\lambda_{j,\varepsilon}^{(2)}\big|\cdot\big( \nabla_\lambda U_{x_{j,\varepsilon}^{(1)},\lambda}(x) |_{\lambda=\lambda_{j,\varepsilon}^{(1)}}\big)
\Big)\\=&
O\Big(\lambda_{j,\varepsilon}^{(1)}\big|x_{j,\varepsilon}^{(1)}-x_{j,\varepsilon}^{(2)}\big| +
(\lambda_{j,\varepsilon}^{(1)})^{-1}|\lambda_{j,\varepsilon}^{(1)}-\lambda_{j,\varepsilon}^{(2)}| \Big) U_{x_{j,\varepsilon}^{(1)},\lambda_{j,\varepsilon}^{(1)}}(x)
=o\big(\frac{1}{\bar\lambda_{\varepsilon}}\big)U_{x_{j,\varepsilon}^{(1)},\lambda_{j,\varepsilon}^{(1)}}(x),
\end{split}
\end{equation*}
which means
\begin{equation}\label{luopeng2}
u_{\varepsilon}^{(1)}(x)-u_{\varepsilon}^{(2)}(x)=
o\big(\frac{1}{\bar\lambda_{\varepsilon}}\big)\Big(\sum_{j=1}^kU_{x_{j,\varepsilon}^{(1)},\lambda_{j,\varepsilon}^{(1)}}(x)\Big)
+O\Big(\sum^2_{l=1}|w_{\varepsilon}^{(l)}(x)|\Big).
\end{equation}
Then for a small fixed $d$ and $x\in B_{d}(x_{j,\varepsilon}^{(1)})$, by \eqref{A--1} and
\eqref{A---1}, we find
\begin{equation}\label{cc8}
\begin{split}
C_{\varepsilon}(x) =& \Big(\frac{N+2}{N-2}+
o\big(\frac{1}{\bar\lambda_{\varepsilon}}\big)\Big)U^{\frac{4}{N-2}}_{x_{j,\varepsilon}^{(1)},\lambda_{j,\varepsilon}^{(1)}}(x)
+O\Big(\frac{1}{\bar\lambda_{\varepsilon}^{2}}\Big)
+O\Big(\sum^2_{l=1}|w_{\varepsilon}^{(l)}(x)|
^{\frac{4}{N-2}}\Big).
\end{split}
\end{equation}
Next, for any given $\Phi(x)\in C_{0}^{\infty}(\R^N)$ with $supp~\Phi(x) \subset B_{\lambda^{(1)}_{j,\varepsilon}d}(x_{j,\varepsilon}^{(1)})
$, we have
\begin{equation}\label{tian42}
\begin{aligned}
\frac{1}{(\lambda^{(1)}_{j,\varepsilon})^{2}}\int_{B_{\lambda^{(1)}_{j,\varepsilon}d}(x_{j,\varepsilon}^{(1)})}
&C_{\varepsilon}\big(\frac{x}{\lambda^{(1)}_{j,\varepsilon}}+x_{j,\varepsilon}^{(1)})\xi_{\varepsilon,j}(x)\Phi(x) dx\\=&\Big(\frac{N+2}{N-2}\Big)
\int_{\R^N}U^{\frac{4}{N-2}}_{0,1}(x)\xi_{\varepsilon,j}(x)\Phi(x)dx+
o\big(\frac{1}{\bar\lambda_{\varepsilon}}\big).
\end{aligned}
\end{equation}
Also from the fact that $\|\xi_{\varepsilon}\|_{L^{\infty}(\Omega)}=1$, we know
\begin{equation}\label{tian43}
\begin{split}
\frac{\varepsilon}{(\lambda^{(1)}_{j,\varepsilon})^{2}}\int_{B_{\lambda^{(1)}_{j,\varepsilon}d}(x_{j,\varepsilon}^{(1)})}
\xi_{\varepsilon,j}(x)\Phi(x)dx=o\big(\frac{1}{\bar\lambda_{\varepsilon}}\big).
\end{split}
\end{equation}
Then \eqref{tian41}, \eqref{tian42} and \eqref{tian43} imply
\begin{equation}\label{3.11}
\begin{split}
-&\int_{B_{\lambda^{(1)}_{j,\varepsilon}d}(x_{j,\varepsilon}^{(1)})}
\Delta\xi_{\varepsilon,j}(x)\Phi(x)dx=
\Big(\frac{N+2}{N-2}\Big)\int_{\R^N}U^{\frac{4}{N-2}}_{0,1}(x)\xi_{\varepsilon,j}(x)\Phi(x)dx+o\big(\frac{1}{\bar\lambda_{\varepsilon}}\big).
\end{split}
\end{equation}
Letting $\varepsilon\rightarrow 0$ in \eqref{3.11} and using the elliptic regularity theory, we find that $\xi_{j}(x)$ satisfies
\begin{equation}\label{luoluo-1}
-\Delta\xi_{j}(x)=\Big(\frac{N+2}{N-2}\Big)U_{0,1}^{\frac{4}{N-2}}(x)\xi_{j}(x),~~\textrm{in~}\R^N,
\end{equation}
which gives $
\xi_{j}(x)=\displaystyle\sum_{i=0}^Nc_{j,i}\psi_i(x)$. Moreover combining \eqref{3.11} and \eqref{luoluo-1}, we find \eqref{cc7}.
\end{proof}
\begin{Prop}
Let $\xi_{\varepsilon}(x)$ be defined as  in  \eqref{3.1}. Then it holds
\begin{equation}\label{laa1}
\begin{split}
\xi_{\varepsilon}(x)=&\sum^k_{j=1}A_{\varepsilon,j}G(x^{(1)}_{j,\varepsilon},x)+
\sum^k_{j=1}\sum^N_{i=1}B_{\varepsilon,j,i}\partial_iG(x^{(1)}_{j,\varepsilon},x)\\&+
\begin{cases}
O\Big(\frac{\ln \bar{\lambda}_\varepsilon}{\bar{\lambda}_\varepsilon^4}\Big),~&N=5,\\[1.5mm]
O\Big(\frac{\ln \bar{\lambda}_\varepsilon}{\bar{\lambda}_\varepsilon^N}\Big),&N\geq 6,
\end{cases}~\mbox{in}~ C^1
\Big(\Omega\backslash\bigcup^k_{j=1}B_{2d}(x^{(1)}_{j,\varepsilon})\Big),
\end{split}\end{equation}
where $d>0$ is any small fixed constant, $\partial_iG(y,x)=\frac{\partial G(y,x)}{\partial y_i}$,
\begin{equation}\label{luo51}
A_{\varepsilon,j}=
\int_{B_d\big(x^{(1)}_{j,\varepsilon}\big)} C_\varepsilon(x)\xi_{\varepsilon}(x)dx~\mbox{and}~
B_{\varepsilon,j,i}= \int_{B_d\big(x^{(1)}_{j,\varepsilon}\big)}(x_i-x^{(1)}_{j,\varepsilon,i}) C_\varepsilon(x)\xi_{\varepsilon}(x)dx.
\end{equation}
\end{Prop}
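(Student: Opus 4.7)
The plan is to derive \eqref{laa1} from the Green's representation of $\xi_\varepsilon$, followed by a first-order Taylor expansion of $y\mapsto G(y,x)$ around each concentration point $x_{j,\varepsilon}^{(1)}$.

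First I would write
\begin{equation*}
\xi_\varepsilon(x) = \int_\Omega G(y,x)\bigl[C_\varepsilon(y)+\varepsilon\bigr]\xi_\varepsilon(y)\,dy
\end{equation*}
and split the domain as $\Omega = \bigcup_{j=1}^k B_d(x_{j,\varepsilon}^{(1)}) \cup \Omega^\sharp$. For $x\in\Omega\setminus\bigcup_j B_{2d}(x_{j,\varepsilon}^{(1)})$, the map $y\mapsto G(y,x)$ is smooth on each $B_d(x_{j,\varepsilon}^{(1)})$, so Taylor's theorem yields
\begin{equation*}
G(y,x) = G(x_{j,\varepsilon}^{(1)},x) + \sum_{i=1}^N \partial_i G(x_{j,\varepsilon}^{(1)},x)\bigl(y_i-x_{j,\varepsilon,i}^{(1)}\bigr) + O\bigl(|y-x_{j,\varepsilon}^{(1)}|^2\bigr).
\end{equation*}
Inserting this into the representation and integrating the first two terms against $C_\varepsilon\xi_\varepsilon$ produces exactly the advertised main terms $A_{\varepsilon,j}G(x_{j,\varepsilon}^{(1)},x)+\sum_i B_{\varepsilon,j,i}\partial_i G(x_{j,\varepsilon}^{(1)},x)$ by the definitions \eqref{luo51}.

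Next I would estimate three error contributions. For the \emph{Taylor remainder} on each ball, I would use the bound $|C_\varepsilon(y)|\leq C(\lambda_{j,\varepsilon}^{(1)})^2\bigl(1+\lambda_{j,\varepsilon}^{(1)}|y-x_{j,\varepsilon}^{(1)}|\bigr)^{-4}$ together with the pointwise refinement $|\xi_\varepsilon(y)|\leq C\ln\bar\lambda_\varepsilon\bigl(1+\lambda_{j,\varepsilon}^{(1)}|y-x_{j,\varepsilon}^{(1)}|\bigr)^{-(N-2)}$ read off from \eqref{l99}; rescaling $z=\lambda_{j,\varepsilon}^{(1)}(y-x_{j,\varepsilon}^{(1)})$ reduces the quadratic error integral to $\ln\bar\lambda_\varepsilon\,\bar\lambda_\varepsilon^{-N}\int_{B_{\lambda d}}|z|^2(1+|z|)^{-(N+2)}\,dz$, which gives $O(\ln\bar\lambda_\varepsilon/\bar\lambda_\varepsilon^N)$ modulo a possible extra logarithm from the slow decay of the integrand at infinity. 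The \emph{far-region} contribution uses $|C_\varepsilon|=O(\bar\lambda_\varepsilon^{-2})$ on $\Omega^\sharp$ together with Proposition~\ref{prop3.1}, yielding $O(\ln\bar\lambda_\varepsilon/\bar\lambda_\varepsilon^N)$. The \emph{$\varepsilon\xi_\varepsilon$ term} combines $\varepsilon=O(\bar\lambda_\varepsilon^{-(N-4)})$ from \eqref{4-16-1} with the rescaled estimate $\int_{B_d(x_{j,\varepsilon}^{(1)})}|\xi_\varepsilon|\,dy = O(\ln\bar\lambda_\varepsilon/\bar\lambda_\varepsilon^{N-2})$; its contribution is $O(\ln\bar\lambda_\varepsilon/\bar\lambda_\varepsilon^{2N-6})$, which is $O(\ln\bar\lambda_\varepsilon/\bar\lambda_\varepsilon^N)$ when $N\geq 6$ and $O(\ln\bar\lambda_\varepsilon/\bar\lambda_\varepsilon^4)$ when $N=5$, matching the stated bifurcated error.

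For the $C^1$ conclusion in \eqref{laa1}, I would differentiate the Green's representation in $x$ and repeat the argument with $D_xG(y,x)$ and $\partial_i D_xG(y,x)$ in place of $G$ and $\partial_i G$; since these kernels remain uniformly bounded on the complement of $\bigcup_j B_{2d}(x_{j,\varepsilon}^{(1)})$, the same rescaled estimates go through verbatim. The main obstacle is the Taylor-remainder step: one cannot afford to use only the crude bound $\|\xi_\varepsilon\|_\infty=1$, because the resulting remainder is too large; one must instead leverage the refined pointwise decay of $\xi_\varepsilon$ from \eqref{l99} and carry out the rescaling carefully so that both the near-region quadratic error and the $\varepsilon\xi_\varepsilon$ term collapse into the announced $\ln\bar\lambda_\varepsilon/\bar\lambda_\varepsilon^N$ (respectively $\ln\bar\lambda_\varepsilon/\bar\lambda_\varepsilon^4$) bound.
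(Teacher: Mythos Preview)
Your proposal is correct and follows essentially the same approach as the paper: Green's representation of $\xi_\varepsilon$, splitting into near/far regions, first-order Taylor expansion of $G(\cdot,x)$ around each $x_{j,\varepsilon}^{(1)}$ to extract the main terms $A_{\varepsilon,j}G+\sum_i B_{\varepsilon,j,i}\partial_i G$, and bounding the quadratic remainder via the refined pointwise decay \eqref{l99}. The paper cites \eqref{3-3} and \eqref{cc7} for the remainder while you invoke \eqref{l99} directly, but these are equivalent; your observation that the crude bound $\|\xi_\varepsilon\|_\infty=1$ is insufficient and that the refined decay is essential is precisely the key point.
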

\begin{proof}
By the potential theory and \eqref{3.2}, we have
\begin{equation}\label{tian44}
\begin{split}
 {\xi}_\varepsilon(x)=& \int_{\Omega}G(y,x)
 C_{\varepsilon}(y) \xi_{\varepsilon}(y)dy+  \varepsilon \int_{\Omega}G(y,x)
\xi_{\varepsilon}(y)dy.
\end{split}
\end{equation}
Next for $x\in \Omega\backslash\displaystyle\bigcup^k_{j=1}B_{2d}(x^{(1)}_{j,\varepsilon})$, by \eqref{3-3}  we find
\begin{equation}\label{tian48}
\begin{split}
 \int_{\Omega}G(y,x)\xi_{\varepsilon}(y)dy=&
 \sum^k_{j=1}\int_{B_d\big(x^{(1)}_{j,\varepsilon}\big)}G(y,x)
 \xi_{\varepsilon}(y)dy+
\int_{\Omega\backslash\bigcup^k_{j=1}B_d\big(x^{(1)}_{j,\varepsilon}\big)}G(y,x)
 \xi_{\varepsilon}(y)dy\\=&
 \sum^k_{j=1}\int_{B_d\big(x^{(1)}_{j,\varepsilon}\big)}
 \xi_{\varepsilon}(y)dy+O\Big(\frac{\ln \bar{\lambda}_\varepsilon}{\bar{\lambda}^{N-2}_\varepsilon}
\int_{\Omega }G(y,x)
 dy\Big)=O\Big(\frac{\ln \bar{\lambda}_\varepsilon}{\bar{\lambda}^{N-2}_\varepsilon}\Big).
\end{split}
\end{equation}
Also using   \eqref{3-3} and \eqref{cc7}, we can get
\begin{equation}\label{atian45}
\begin{split}
 \int_{\Omega}G&(y,x)
 C_{\varepsilon}(y) \xi_{\varepsilon}(y)dy \\=&
 \sum^k_{j=1}\int_{B_d\big(x^{(1)}_{j,\varepsilon}\big)}G(y,x)
 C_{\varepsilon}(y) \xi_{\varepsilon}(y)dy+
 \int_{\Omega\backslash\bigcup^k_{j=1}B_d\big(x^{(1)}_{j,\varepsilon}\big)}G(y,x)
 C_{\varepsilon}(y) \xi_{\varepsilon}(y)dy\\=&
 \sum^k_{j=1}A_{\varepsilon,j}G(x^{(1)}_{j,\varepsilon},x)
 +\sum^k_{j=1}\sum^N_{i=1}B_{\varepsilon,j,i}\partial_iG(x^{(1)}_{j,\varepsilon},x)\\&
 +O\Big(\sum^k_{j=1}\int_{B_d\big(x^{(1)}_{j,\varepsilon}\big)}|y-x^{(1)}_{j,\varepsilon}|^2
 C_{\varepsilon}(y) \xi_{\varepsilon}(y)dy\Big)+O\Big(\frac{\ln \bar{\lambda}_\varepsilon}{\bar\lambda^N_\varepsilon}
 \int_{\Omega\backslash\bigcup^k_{j=1}B_d\big(x^{(1)}_{j,\varepsilon}\big)}G(y,x)dy\Big)
 \\=&
  \sum^k_{j=1}A_{\varepsilon,j}G(x^{(1)}_{j,\varepsilon},x)
 + \sum^k_{j=1}\sum^N_{i=1}B_{\varepsilon,j,i}\partial_iG(x^{(1)}_{j,\varepsilon},x)
 +O\Big(\frac{\ln \bar{\lambda}_\varepsilon}{\bar\lambda^N_\varepsilon}\Big),
\end{split}
\end{equation}
where $A_{\varepsilon,j}$ and $B_{\varepsilon,j,i}$ are defined  in \eqref{luo51}.
Hence \eqref{4-16-1}, \eqref{tian44}, \eqref{tian48} and \eqref{atian45} imply
\begin{equation*}\begin{split}
\xi_{\varepsilon}(x)=&
  \sum^k_{j=1}A_{\varepsilon,j}G(x^{(1)}_{j,\varepsilon},x)
 + \sum^k_{j=1}\sum^N_{i=1}B_{\varepsilon,j,i}\partial_iG(x^{(1)}_{j,\varepsilon},x)
 +O\Big(\frac{\ln \bar{\lambda}_\varepsilon}{\bar\lambda^N_\varepsilon}+\frac{\varepsilon \ln \bar{\lambda}_\varepsilon}{\bar{\lambda}^{N-2}_\varepsilon}\Big)\\=&
\sum^k_{j=1}A_{\varepsilon,j}G(x^{(1)}_{j,\varepsilon},x)+
\sum^k_{j=1}\sum^N_{i=1}B_{\varepsilon,j,i}\partial_iG(x^{(1)}_{j,\varepsilon},x)\\&+
\begin{cases}
O\Big(\frac{\ln \bar{\lambda}_\varepsilon}{\bar{\lambda}_\varepsilon^4}\Big),~&N=5,\\[1.5mm]
O\Big(\frac{\ln \bar{\lambda}_\varepsilon}{\bar{\lambda}_\varepsilon^N}\Big),&N\geq 6,
\end{cases}~\mbox{for}~ x\in
 \Omega\backslash\bigcup^k_{j=1}B_{2d}(x^{(1)}_{j,\varepsilon}).
 \end{split}
\end{equation*}
On the other hand, from \eqref{tian44}, we know
\begin{equation*}
\begin{split}
\frac{\partial{\xi}_\varepsilon(x)}{\partial x_i}=& \int_{\Omega}D_{x_i}G(y,x)
 C_{\varepsilon}(y) \xi_{\varepsilon}(y)dy+  \varepsilon \int_{\Omega}D_{x_i}G(y,x)
\xi_{\varepsilon}(y)dy,~\mbox{for}~i=1,\cdots,N.
\end{split}
\end{equation*}
Then similar to the above estimates of $\xi_{\varepsilon}(x)$, we can  complete the proof of \eqref{laa1}.
\end{proof}
%\begin{Rem}
%By direct calculations, we have
%\begin{equation}\label{luopeng4}
%A_{\varepsilon,j}=O\Big(\frac{1}{\bar{\lambda}^{N-2}_\varepsilon}\Big)~\mbox{and}~
%B_{\varepsilon,j,i}=O\Big(\frac{1}{\bar{\lambda}^{N-1}_\varepsilon}\Big)~\mbox{for}~j=1,\cdots,k~\mbox{and}~
% i=1,\cdots,N.
%\end{equation}
%Then from  \eqref{laa1} and \eqref{luopeng4}, we can also write $\xi_{\varepsilon}(x)$ as follows:
%\begin{equation}\label{luopeng3}
%\xi_{\varepsilon}(x)=\sum^k_{j=1}A_{\varepsilon,j}G(x^{(1)}_{j,\varepsilon},x)+
%\begin{cases}
%O\Big(\frac{\ln \bar{\lambda}_\varepsilon}{\bar{\lambda}_\varepsilon^4}\Big),~&N=5,\\
%O\Big(\frac{1}{\bar{\lambda}_\varepsilon^{N-1}}\Big),&N\geq 6,
%\end{cases}~\mbox{in}~ C^1
%\Big(\Omega\backslash\bigcup^k_{j=1}B_d\big(x^{(1)}_{j,\varepsilon}\big)\Big).
%\end{equation}
%\end{Rem}
\section{Proofs of Theorem \ref{th1.1} and Theorem \ref{th1.2}}\label{s4}
\setcounter{equation}{0}

In this section we prove Theorem \ref{th1.1} and Theorem \ref{th1.2}. Theorem \ref{th1.1} will be verified by using an indirect argument to  show  $\xi_{\varepsilon}(x)\equiv 0$ for $\varepsilon$ small. To do this we
first show that $\xi_{\varepsilon}(x)$ is small both near and away from the points at which
 $u^{(l)}_{\varepsilon}(x)$ ($l=1,2$) concentrates. Therefore we need to obtain quantitative
  behaviors for $u^{(1)}_{\varepsilon}(x)$ , $u^{(2)}_{\varepsilon}(x)$ and  $\xi_{\varepsilon}(x)$.
\begin{Prop}
Let $u^{(l)}_{\varepsilon}(x)$ with $l=1,2$ be the solutions of $\eqref{1.1}$ satisfying \eqref{4-6-1}. Then for small fixed $d>0$, it holds
\begin{equation}\label{ccc}
u^{(l)}_{\varepsilon}(x)=A\Big(\sum^k_{j=1}\frac{G(x^{(1)}_{j,\varepsilon},x)}{
(\lambda^{(1)}_{j,\varepsilon})^{(N-2)/2}}\Big)
+\begin{cases}
 O\Big(\frac{1}{\bar\lambda_\varepsilon^{5/2}}\Big),~&N=5,\\[1.5mm]
 O\Big(\frac{1}{\bar\lambda_\varepsilon^{(N+2)/2}}\Big),~&N\geq 6,
\end{cases}~\mbox{in}~  C^1
\Big(\Omega\backslash\bigcup^k_{j=1}B_{2d}(x^{(1)}_{j,\varepsilon})\Big),
\end{equation}
where $A$ is the constant in \eqref{a1}.
\end{Prop}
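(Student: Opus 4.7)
The $l=1$ case of \eqref{ccc} is exactly \eqref{aclp-2} applied to $u^{(1)}_\varepsilon$, so the real content lies entirely in the case $l=2$. My plan is to apply \eqref{aclp-2} to $u^{(2)}_\varepsilon$ (which gives the expansion in terms of $x^{(2)}_{j,\varepsilon},\lambda^{(2)}_{j,\varepsilon}$) and then transfer those parameters to $x^{(1)}_{j,\varepsilon},\lambda^{(1)}_{j,\varepsilon}$, checking that the transfer cost fits inside the error already allowed in \eqref{ccc}.

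The transfer is a Taylor expansion in two independent directions. For each $j$ I would split
\begin{equation*}
\frac{G(x^{(2)}_{j,\varepsilon},x)}{(\lambda^{(2)}_{j,\varepsilon})^{(N-2)/2}}-\frac{G(x^{(1)}_{j,\varepsilon},x)}{(\lambda^{(1)}_{j,\varepsilon})^{(N-2)/2}}
=\frac{G(x^{(2)}_{j,\varepsilon},x)-G(x^{(1)}_{j,\varepsilon},x)}{(\lambda^{(2)}_{j,\varepsilon})^{(N-2)/2}}
+G(x^{(1)}_{j,\varepsilon},x)\bigl[(\lambda^{(2)}_{j,\varepsilon})^{-(N-2)/2}-(\lambda^{(1)}_{j,\varepsilon})^{-(N-2)/2}\bigr].
\end{equation*}
On the complement of $\bigcup_l B_{2d}(x^{(1)}_{l,\varepsilon})$, which after shrinking $d$ also avoids a fixed neighborhood of every $x^{(2)}_{l,\varepsilon}$, the map $y\mapsto G(y,x)$ is $C^1$ with uniform bounds in $x$, and the same for $\nabla_x G(y,x)$. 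So a Taylor expansion in the first argument controls the first piece by $O\bigl(\bar\lambda_\varepsilon^{-(N-2)/2}|x^{(1)}_{j,\varepsilon}-x^{(2)}_{j,\varepsilon}|\bigr)$ in the $C^1$ norm. For the second piece, the key observation is that $\lambda^{(l)}_{j,\varepsilon}=\varepsilon^{-1/(N-4)}(\Lambda^{(l)}_{j,\varepsilon})^{-1}$ with $\Lambda^{(l)}_{j,\varepsilon}$ bounded above and below; a mean-value computation then yields an $O\bigl(\bar\lambda_\varepsilon^{-(N-2)/2}|\Lambda^{(1)}_{j,\varepsilon}-\Lambda^{(2)}_{j,\varepsilon}|\bigr)$ bound.

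To finish, I would insert the quantitative closeness of the parameters. For $N\ge 6$, the estimate \eqref{aaaclp-03} together with the $\Lambda$-level information that comes out of the proof of \eqref{aaaabclp-01} gives $|x^{(1)}_{j,\varepsilon}-x^{(2)}_{j,\varepsilon}|,\,|\Lambda^{(1)}_{j,\varepsilon}-\Lambda^{(2)}_{j,\varepsilon}|=o(\bar\lambda_\varepsilon^{-2})$, which makes both pieces of size $o(\bar\lambda_\varepsilon^{-(N+2)/2})$; for $N=5$ only \eqref{clp-03} is available, but it still gives both differences of size $O(\bar\lambda_\varepsilon^{-1})$, producing $O(\bar\lambda_\varepsilon^{-5/2})$. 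In either case the replacement is absorbed into the error already present in \eqref{aclp-2}. The only mildly delicate point is bookkeeping the rescaling factor $\varepsilon^{-1/(N-4)}\sim\bar\lambda_\varepsilon$ when passing between $\lambda$- and $\Lambda$-differences; beyond that, the argument is a routine perturbation of \eqref{aclp-2}.
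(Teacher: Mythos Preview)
Your proposal is correct and follows essentially the same route as the paper: apply \eqref{aclp-2} to each solution and then transfer the parameters of $u^{(2)}_\varepsilon$ to those of $u^{(1)}_\varepsilon$ by a first-order Taylor expansion of $G(y,x)/\lambda^{(N-2)/2}$ in $(y,\lambda)$. The only cosmetic differences are that the paper expands directly in $\lambda$ rather than in $\Lambda$, obtaining the error $O\bigl(|x^{(1)}_{j,\varepsilon}-x^{(2)}_{j,\varepsilon}|/\bar\lambda_\varepsilon^{(N-2)/2}\bigr)+O\bigl(|\lambda^{(1)}_{j,\varepsilon}-\lambda^{(2)}_{j,\varepsilon}|/\bar\lambda_\varepsilon^{N/2}\bigr)$, and that the paper closes the argument using only \eqref{clp-03} (the $O$-bounds), not the sharper $o$-bounds \eqref{aaaclp-03}; either input suffices and there is no circularity since both precede the present proposition.
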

\begin{proof}
First, \eqref{clp-2} implies that \eqref{ccc} holds for $l=1$
and
\begin{equation}\label{tian1}
u^{(2)}_{\varepsilon}(x)=A\Big(\sum^k_{j=1}\frac{G(x^{(2)}_{j,\varepsilon},x)}{
(\lambda^{(2)}_{j,\varepsilon})^{(N-2)/2}}\Big)
+\begin{cases}
 O\Big(\frac{1}{\bar\lambda_\varepsilon^{5/2}}\Big),~&N=5,\\[1.5mm]
 O\Big(\frac{1}{\bar\lambda_\varepsilon^{(N+2)/2}}\Big),~&N\geq 6,
\end{cases}~\mbox{in}~  C^1
\Big(\Omega\backslash\bigcup^k_{j=1}B_{2d}(x^{(2)}_{j,\varepsilon})\Big).
\end{equation}
Also we calculate
\begin{equation*}
\frac{G(x^{(2)}_{j,\varepsilon},x)}{
(\lambda^{(2)}_{j,\varepsilon})^{(N-2)/2}}=
\frac{G(x^{(1)}_{j,\varepsilon},x)}{
(\lambda^{(1)}_{j,\varepsilon})^{(N-2)/2}}+O\Big(\frac{
|x^{(1)}_{j,\varepsilon}-x^{(2)}_{j,\varepsilon}|}{\bar{\lambda}_\varepsilon^{(N-2)/2}}\Big)
+O\Big(\frac{|\lambda^{(1)}_{j,\varepsilon}-\lambda^{(2)}_{j,\varepsilon}|}{\bar{\lambda}_\varepsilon^{N/2}}\Big).
\end{equation*}
Since $B_{d}(x^{(1)}_{j,\varepsilon})\subset B_{2d}(x^{(2)}_{j,\varepsilon})$ for small $\varepsilon$,   we get \eqref{ccc} for $l=2$ from \eqref{clp-03} and \eqref{tian1}.
\end{proof}
\begin{Prop}
For $\xi_{\varepsilon}$ defined by \eqref{3.1}, we have the following local Pohozaev identities:
 \begin{equation}\label{a2}
 \begin{split}
-\int_{\partial \Omega'}&\frac{\partial \xi_\varepsilon}{\partial \nu}\frac{\partial u^{(1)}_\varepsilon}{\partial x_i}-
\int_{\partial \Omega'}\frac{\partial u^{(2)}_\varepsilon}{\partial \nu}\frac{\partial \xi_\varepsilon}{\partial x_i}
+\frac{1}{2}\int_{\partial \Omega'}\big\langle \nabla (u^{(1)}_\varepsilon+u^{(2)}_\varepsilon),\nabla \xi_\varepsilon \big\rangle \nu_i\\&
= \frac{N-2}{2N} \int_{\partial \Omega'}D_\varepsilon(x) \xi_\varepsilon \nu_i
+  \varepsilon\int_{\partial \Omega'} (u_{\varepsilon}^{(1)}+u_{\varepsilon}^{(2)})\xi_\varepsilon \nu_i,
\end{split}
\end{equation}
and
\begin{equation}\label{clp1}
\begin{split}
\frac{1}{2}&\int_{\partial \Omega'}
\big\langle \nabla (u_{\varepsilon}^{(1)}+u_{\varepsilon}^{(2)}),  \nabla \xi_{\varepsilon}\big\rangle
\big\langle x-x^{(1)}_{j,\varepsilon},\nu\big\rangle -\int_{\partial \Omega'}\frac{\partial\xi_\varepsilon}{\partial\nu}
\big\langle x-x^{(1)}_{j,\varepsilon},\nabla u_{\varepsilon}^{(1)}\big\rangle\\&
-\int_{\partial \Omega'}\frac{\partial u^{(2)}_\varepsilon}{\partial\nu}
\big\langle x-x^{(1)}_{j,\varepsilon},\nabla \xi_{\varepsilon}\big\rangle
+\frac{2-N}{2}\int_{\partial \Omega'}\frac{\partial\xi_\varepsilon}{\partial\nu}
  u_{\varepsilon}^{(1)}
 +\frac{2-N}{2}\int_{\partial \Omega'}\frac{\partial u_{\varepsilon}^{(2)}}{\partial\nu}\xi_\varepsilon   \\=&
  \int_{\partial \Omega'} D_{\varepsilon}(x)\xi_\varepsilon \big\langle x-x^{(1)}_{j,\varepsilon},\nu\big\rangle  +\varepsilon\int_{\partial \Omega'} (u_{\varepsilon}^{(1)}+u_{\varepsilon}^{(2)})\xi_\varepsilon \big\langle x-x^{(1)}_{j,\varepsilon},\nu\big\rangle
  -\varepsilon\int_{ \Omega'} (u_{\varepsilon}^{(1)}+u_{\varepsilon}^{(2)})\xi_\varepsilon ,
\end{split}
\end{equation}
where  $\Omega'\subset \Omega$ is a smooth domain, $\nu(x)=\big(\nu_{1}(x),\cdots,\nu_N(x)\big)$ is the outward unit normal of $\partial \Omega'$ and
\begin{equation*}
D_{\varepsilon}(x)= \int_{0}^1\Big(tu_{\varepsilon}^{(1)}(x)+(1-t)u_{\varepsilon}^{(2)}(x)\Big)^{\frac{N+2}{N-2}}
dt.
\end{equation*}
\end{Prop}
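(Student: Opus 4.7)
The plan is to derive both identities by starting from the single-solution Pohozaev identities \eqref{clp-1} and \eqref{clp-10} (which the authors have already noted follow from testing \eqref{1.1} with $\partial_i u_\varepsilon$ and $\langle x-x_{j,\varepsilon},\nabla u_\varepsilon\rangle$ respectively), applying them to $u^{(1)}_\varepsilon$ and $u^{(2)}_\varepsilon$ on the \emph{same} smooth domain $\Omega'\subset \Omega$ with the \emph{same} centre $x^{(1)}_{j,\varepsilon}$, subtracting the two resulting identities, and finally dividing through by the normalization $\|u^{(1)}_\varepsilon-u^{(2)}_\varepsilon\|_{L^\infty(\Omega)}$ so that $\xi_\varepsilon$ (as defined in \eqref{3.1}) appears in every term. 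Since $u^{(1)}_\varepsilon$ and $u^{(2)}_\varepsilon$ solve the same PDE, both identities hold for each of them with identical structure, and the subtraction is algebraically legitimate.

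The task then reduces to re-expressing the various quadratic boundary integrands in the two identities as differences linear in $u^{(1)}_\varepsilon-u^{(2)}_\varepsilon$. For the quadratic gradient terms I would use the polarization identities
\begin{equation*}
\partial_\nu u^{(1)}_\varepsilon\,\partial_i u^{(1)}_\varepsilon - \partial_\nu u^{(2)}_\varepsilon\,\partial_i u^{(2)}_\varepsilon
= \partial_\nu(u^{(1)}_\varepsilon-u^{(2)}_\varepsilon)\,\partial_i u^{(1)}_\varepsilon + \partial_\nu u^{(2)}_\varepsilon\,\partial_i(u^{(1)}_\varepsilon-u^{(2)}_\varepsilon),
\end{equation*}
\begin{equation*}
|\nabla u^{(1)}_\varepsilon|^2 - |\nabla u^{(2)}_\varepsilon|^2 = \bigl\langle \nabla(u^{(1)}_\varepsilon-u^{(2)}_\varepsilon),\,\nabla(u^{(1)}_\varepsilon+u^{(2)}_\varepsilon)\bigr\rangle,
\end{equation*}
and for the lower-order mass term $(u^{(1)}_\varepsilon)^2-(u^{(2)}_\varepsilon)^2 = (u^{(1)}_\varepsilon-u^{(2)}_\varepsilon)(u^{(1)}_\varepsilon+u^{(2)}_\varepsilon)$. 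After dividing by $\|u^{(1)}_\varepsilon-u^{(2)}_\varepsilon\|_{L^\infty}$ these produce exactly the linear-in-$\xi_\varepsilon$ boundary integrands appearing on the left and right hand sides of \eqref{a2} and \eqref{clp1}. The analogous manipulations for the identity \eqref{clp-10} involve the products $\partial_\nu u\,\langle x-x^{(1)}_{j,\varepsilon},\nabla u\rangle$ and $u\,\partial_\nu u$, both of which split by the same bilinear trick.

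The nonlinear term is handled by the fundamental theorem of calculus: with $D_\varepsilon$ as defined in the statement one has
\begin{equation*}
(u^{(1)}_\varepsilon)^{\frac{2N}{N-2}} - (u^{(2)}_\varepsilon)^{\frac{2N}{N-2}} = \frac{2N}{N-2}\,D_\varepsilon(x)\,(u^{(1)}_\varepsilon-u^{(2)}_\varepsilon),
\end{equation*}
obtained by differentiating $t\mapsto (tu^{(1)}_\varepsilon+(1-t)u^{(2)}_\varepsilon)^{2N/(N-2)}$ under the integral sign. Multiplying by $\frac{N-2}{2N}$ and dividing by $\|u^{(1)}_\varepsilon-u^{(2)}_\varepsilon\|_{L^\infty}$ yields the $D_\varepsilon \xi_\varepsilon$ boundary integrals on the right-hand sides of both \eqref{a2} and \eqref{clp1}.

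For the second identity \eqref{clp1} there is one genuine volume contribution, namely $-\varepsilon\int_{\Omega'} u_\varepsilon^2$, which after subtraction and factorization becomes $-\varepsilon\int_{\Omega'}(u^{(1)}_\varepsilon+u^{(2)}_\varepsilon)\,(u^{(1)}_\varepsilon-u^{(2)}_\varepsilon)$; dividing by $\|u^{(1)}_\varepsilon-u^{(2)}_\varepsilon\|_{L^\infty}$ produces the $-\varepsilon\int_{\Omega'}(u^{(1)}_\varepsilon+u^{(2)}_\varepsilon)\xi_\varepsilon$ term. No real obstacle is expected: the whole argument is a mechanical bilinear subtraction, and the only caveat is to keep track of which quantity is paired with $\xi_\varepsilon$ and which with $u^{(l)}_\varepsilon$ so that the asymmetry between the two summands on the left-hand sides of \eqref{a2} and \eqref{clp1} is reproduced correctly.
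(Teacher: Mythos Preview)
Your proposal is correct and is exactly the approach the paper takes: its entire proof reads ``Taking $u_\varepsilon=u_{\varepsilon}^{(l)}$ with $l=1,2$ in \eqref{clp-1} and \eqref{clp-10}, and then making a difference between those respectively, we can obtain \eqref{a2} and \eqref{clp1}.'' You have simply written out the bilinear factorizations and the fundamental-theorem-of-calculus step for the nonlinearity that this one-line proof leaves implicit.
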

\begin{proof}
Taking $u_\varepsilon=u_{\varepsilon}^{(l)}$ with $l=1,2$ in \eqref{clp-1} and \eqref{clp-10}, and  then making a difference between those respectively, we can obtain \eqref{a2} and \eqref{clp1}.
\end{proof}

\begin{Prop}\label{prop-luo1}
For $N\geq 6$, it holds
\begin{equation}\label{luo-13}
c_{j,0}=0,~\mbox{for}~j=1,\cdots,k,
\end{equation}
where $c_{j,0}$ are the constants in Proposition \ref{prop3-2}.
\end{Prop}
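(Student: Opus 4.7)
My plan is to apply the local Pohozaev identity \eqref{clp1} with $\Omega'=B_\theta(x^{(1)}_{j,\varepsilon})$ for a small fixed $\theta>0$, since the ``scaling'' mode $\psi_0=\partial_\lambda U_{0,\lambda}|_{\lambda=1}$ is dual to the vector field $x-x^{(1)}_{j,\varepsilon}$ that generates \eqref{clp1}. The bulk term on the right of \eqref{clp1} will single out $c_{j,0}$ through a non-degenerate scalar product, while the boundary terms on the left will turn out to be of strictly smaller order.

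For the boundary integrals on $\partial B_\theta(x^{(1)}_{j,\varepsilon})$, I insert the far-field expansions \eqref{ccc} for $u^{(l)}_\varepsilon$ and \eqref{laa1} for $\xi_\varepsilon$. Each surface integral then decomposes into bilinear forms on pairs of Green's-function factors of the form $\bigl\{G(x^{(1)}_{m,\varepsilon},\cdot),\,\partial_i G(x^{(1)}_{l,\varepsilon},\cdot)\bigr\}$. Since these factors are harmonic on $B_\theta(x^{(1)}_{j,\varepsilon})\setminus\{x^{(1)}_{j,\varepsilon}\}$, each bilinear form is independent of $\theta$ and can be evaluated in closed form by extracting the singular parts of $G$, exactly in the style of \eqref{luo2} (whose proof is deferred to Section~5). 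This expresses the LHS of \eqref{clp1} as an explicit linear combination of the unknown coefficients $A_{\varepsilon,j},B_{\varepsilon,j,i}$ from \eqref{luo51} against Green's-function data, plus controlled remainders.

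For the bulk term, I rescale $y=\lambda^{(1)}_{j,\varepsilon}(x-x^{(1)}_{j,\varepsilon})$, use $u^{(l)}_\varepsilon\approx U_{x^{(l)}_{j,\varepsilon},\lambda^{(l)}_{j,\varepsilon}}$ on the inner region, Proposition \ref{prop3-2} for $\xi_\varepsilon$ there, and \eqref{3-3} to discard the outer tail. This yields
\begin{equation*}
-\varepsilon\int_{B_\theta(x^{(1)}_{j,\varepsilon})}\bigl(u^{(1)}_\varepsilon+u^{(2)}_\varepsilon\bigr)\xi_\varepsilon\,dx
=-\frac{2\varepsilon}{(\lambda^{(1)}_{j,\varepsilon})^{(N+2)/2}}\sum_{i=0}^{N}c_{j,i}\int_{\mathbb R^N}U_{0,1}\psi_i\,dy+o\Bigl(\tfrac{\varepsilon}{\bar\lambda_\varepsilon^{(N+2)/2}}\Bigr).
\end{equation*}
By odd symmetry, $\int_{\mathbb R^N}U_{0,1}\psi_i=0$ for $i=1,\dots,N$, and differentiating the identity $\int U_{0,\lambda}^2=B/\lambda^2$ at $\lambda=1$ gives $\int_{\mathbb R^N} U_{0,1}\psi_0=-B$. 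So the bulk term equals $2Bc_{j,0}\,\varepsilon/(\lambda^{(1)}_{j,\varepsilon})^{(N+2)/2}$ at leading order.

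To close the argument, I need the coefficients $A_{\varepsilon,j}$ and $B_{\varepsilon,j,i}$ to be of higher order than this bulk contribution, so that the boundary side of \eqref{clp1} vanishes at the leading scale. Rescaling the defining integrals \eqref{luo51} and inserting Proposition \ref{prop3-2} reduces their leading terms to $\int_{\mathbb R^N}U_{0,1}^{4/(N-2)}\psi_i\,dy$, which vanishes for every $i$, since $-\Delta\psi_i=\tfrac{N+2}{N-2}U_{0,1}^{4/(N-2)}\psi_i$ and $\psi_i$ has sufficient decay to make the integration by parts legitimate. Hence \eqref{clp1} forces $2Bc_{j,0}=0$ at leading order, yielding $c_{j,0}=0$. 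The main obstacle is the error bookkeeping: I must verify that the cross-concentration contributions (involving $G(x^{(1)}_{l,\varepsilon},x^{(1)}_{m,\varepsilon})$ with $l\ne m$) and the ``$\varepsilon u\xi$'' boundary pieces are all genuinely $o\bigl(\varepsilon/\bar\lambda_\varepsilon^{(N+2)/2}\bigr)$; the sharper location estimate \eqref{clp-03} available only for $N\ge 6$ is precisely what keeps these remainders under control, which is why the statement is restricted to $N\ge 6$.
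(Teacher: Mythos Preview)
Your overall strategy is the same as the paper's---apply \eqref{clp1} on $B_\theta(x^{(1)}_{j,\varepsilon})$, expand both sides via \eqref{ccc} and \eqref{laa1}, and identify the bulk contribution as $2Bc_{j,0}\,\varepsilon/(\lambda^{(1)}_{j,\varepsilon})^{(N+2)/2}$. The gap is in your claim that the boundary side is of strictly smaller order because $A_{\varepsilon,j}$ and $B_{\varepsilon,j,i}$ have vanishing leading coefficients.

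Your integration-by-parts argument ``$\int U_{0,1}^{4/(N-2)}\psi_i=\tfrac{N-2}{N+2}\int(-\Delta\psi_i)=0$'' is correct for $i=1,\dots,N$ (odd symmetry already gives this), but it \emph{fails for $i=0$}. Since $\psi_0(x)\sim -C_N\tfrac{N-2}{2}|x|^{2-N}$ at infinity, the flux $\int_{|x|=R}\partial_\nu\psi_0$ does not vanish as $R\to\infty$; in fact $\tfrac{N+2}{N-2}\int_{\mathbb R^N}U_{0,1}^{4/(N-2)}\psi_0=-\tfrac{N-2}{2}A\neq 0$. Hence, as the paper records in \eqref{luo31},
\[
A_{\varepsilon,j}=-\frac{(N-2)A\,c_{j,0}}{2(\lambda^{(1)}_{j,\varepsilon})^{N-2}}+o\bigl(\bar\lambda_\varepsilon^{-(N-1)}\bigr),
\]
which, combined with \eqref{1-1}, makes the boundary side of \eqref{clp1} contribute at order $\bar\lambda_\varepsilon^{-(3N-6)/2}$---exactly the same as your bulk term $\varepsilon/\bar\lambda_\varepsilon^{(N+2)/2}\sim\bar\lambda_\varepsilon^{-(3N-6)/2}$. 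So the two sides do not decouple.

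What actually happens (see \eqref{luo3}--\eqref{tian62}) is that both sides carry terms proportional to the $c_{l,0}$'s, coupled through the Green data $R(x^{(1)}_{j,\varepsilon})$ and $G(x^{(1)}_{j,\varepsilon},x^{(1)}_{l,\varepsilon})$. Subtracting and using \eqref{cc5} to replace the RHS yields a $k\times k$ linear system $\overline{M}_{k,\varepsilon}\vec D_k=o(\bar\lambda_\varepsilon^{-(N-1)})$ (plus lower-order $B$ terms). The conclusion $c_{j,0}=0$ then rests on showing $\overline{M}_{k,\varepsilon}$ is invertible, which the paper obtains from strict diagonal dominance for $N\ge 6$ via the positivity encoded in \eqref{cc5}. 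Your proposal bypasses this linear-algebra step by an incorrect vanishing claim; without it, the cross-concentration couplings you flagged as ``the main obstacle'' are not merely error terms but are essential to the argument.
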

\begin{proof}
First, we define the following quadric form
\begin{equation*}
\begin{split}
P_1(u,v)=&- \theta\int_{\partial B_\theta(x^{(1)}_{j,\varepsilon})}
\big\langle \nabla u ,\nu\big\rangle
\big\langle \nabla v,\nu\big\rangle
+ \frac{\theta}{2} \int_{\partial B_\theta(x^{(1)}_{j,\varepsilon})}
\big\langle \nabla u , \nabla v \big\rangle\\&
+\frac{2-N}{4}\int_{\partial B_\theta(x^{(1)}_{j,\varepsilon})}
\big\langle \nabla u ,  \nu \big\rangle v
+\frac{2-N}{4}\int_{\partial B_\theta(x^{(1)}_{j,\varepsilon})}
\big\langle \nabla v ,  \nu \big\rangle u.
\end{split}
\end{equation*}
Note that if $u$ and $v$ are harmonic in $ B_d\big(x^{(1)}_{j,\varepsilon}\big)\backslash \{x^{(1)}_{j,\varepsilon}\}$, then $P_1(u,v)$ is independent of $\theta\in (0,d]$. So using \eqref{laa1} and \eqref{ccc}, we get by taking $\Omega'=B_\theta\big(x^{(1)}_{j,\varepsilon}\big)$ in \eqref{clp1}, for $N\geq 6$,
\begin{equation}\label{tian61}
\begin{split}
 \text{LHS of \eqref{clp1}}=&\sum^k_{l=1}\sum^k_{m=1}\frac{2AA_{\varepsilon,l}}{(\lambda^{(1)}_{m,\varepsilon})^{(N-2)/2}}
P_1\Big(G(x^{(1)}_{m,\varepsilon},x),G(x^{(1)}_{l,\varepsilon},x)\Big)
\\&+\sum^k_{l=1}\sum^k_{m=1}\sum^N_{h=1}\frac{2AB_{\varepsilon,l,h}}{(\lambda^{(1)}_{m,\varepsilon})^{(N-2)/2}}
P_1\Big(G(x^{(1)}_{m,\varepsilon},x),\partial_hG(x^{(1)}_{l,\varepsilon},x)\Big)
+O\Big(\frac{1}{\bar{\lambda}_\varepsilon^{(3N-2)/2}}\Big).
\end{split}
\end{equation}
Similar to \eqref{luo2}, we have
\begin{equation}\label{1-1}
P_1\Big(G(x^{(1)}_{m,\varepsilon},x), G(x^{(1)}_{l,\varepsilon},x)\Big)=
\begin{cases}
-\frac{(N-2)R(x^{(1)}_{j,\varepsilon})}{2},~&\mbox{for}~l,m=j.\\[1mm]
\frac{(N-2)G(x^{(1)}_{j,\varepsilon},x^{(1)}_{l,\varepsilon})}{4},~
&\mbox{for}~m=j, ~l\neq j. \\[1mm]
\frac{(N-2)G(x^{(1)}_{j,\varepsilon},x^{(1)}_{m,\varepsilon})}{4},~&\mbox{for}~m\neq j, ~l=j.\\[1mm]
0, ~&\mbox{for}~l,m\neq j.
\end{cases}
\end{equation}
Also we have the following estimates for which the proof is postponed until Section 5:
 \begin{equation}\label{bb1-1}
P_1\Big(G(x^{(1)}_{m,\varepsilon},x),\partial_hG(x^{(1)}_{l,\varepsilon},x)\Big)=
\begin{cases}
\big(\frac{N-2}{4}+\frac{1-N}{2N}\big)\partial _h R\big(x^{(1)}_{j,\varepsilon}\big),~&\mbox{for}~l,m=j.\\[1mm]
\frac{(N-2)}{4}\partial_hG \big(x^{(1)}_{l,\varepsilon},x^{(1)}_{j,\varepsilon} \big),~
&\mbox{for}~m=j, ~l\neq j. \\[1mm]
\big(\frac{N-2}{4}+\frac{1-N}{N}\big)\partial _hG \big(x^{(1)}_{j,\varepsilon},x^{(1)}_{m,\varepsilon} \big),~&\mbox{for}~m\neq j, ~l=j.\\[1mm]
0, ~&\mbox{for}~l,m\neq j.
\end{cases}
\end{equation}
On the other hand, from  \eqref{3-3}, \eqref{cc7}, \eqref{cc8}, \eqref{luo51} and \eqref{lp21}, we deduce
\begin{equation}\label{luo31}
\begin{split}
A_{\varepsilon,j}=&\Big(\frac{N+2}{N-2}\Big)
\int_{B_d\big(x^{(1)}_{j,\varepsilon}\big)} U^{\frac{4}{N-2}}_{x^{(1)}_{j,\varepsilon},\lambda^{(1)}_{j,\varepsilon}}\xi_{\varepsilon}+
o\Big(\frac{1}{\bar{\lambda}_\varepsilon^{N-1}}\Big)=
-\frac{(N-2)Ac_{j,0}}{2(\lambda^{(1)}_{j,\varepsilon})^{N-2}}+
o\Big(\frac{1}{\bar{\lambda}_{\varepsilon}^{N-1}}\Big).
\end{split}
\end{equation}
Now let $
d_{j,\varepsilon}=\frac{(N-2)A^2c_{j,0}}{8(\lambda^{(1)}_{j,\varepsilon})^{(N-2)/2}}$, for $j=1,\cdots,k$.
Then \eqref{tian61}--\eqref{luo31} imply
\begin{small}
\begin{equation}\label{luo3}
\begin{split}
 \text{LHS of \eqref{clp1}}=&
 {2(N-2)d_{j,\varepsilon}} \Big(\frac{R(x^{(1)}_{j,\varepsilon})}{(\lambda^{(1)}_{j,\varepsilon})^{N-2}}-\sum^k_{l\neq j}\frac{G(x^{(1)}_{j,\varepsilon},x^{(1)}_{l,\varepsilon})}
 {(\lambda^{(1)}_{j,\varepsilon})^{(N-2)/2}(\lambda^{(1)}_{l,\varepsilon})^{(N-2)/2}}
\Big)\\&
+{2(N-2)} \Big(\frac{d_{j,\varepsilon} R(x^{(1)}_{j,\varepsilon})}{(\lambda^{(1)}_{j,\varepsilon})^{N-2}}-\sum^k_{l\neq j}\frac{d_{l,\varepsilon}G(x^{(1)}_{j,\varepsilon},x^{(1)}_{l,\varepsilon})}
 {(\lambda^{(1)}_{j,\varepsilon})^{(N-2)/2}(\lambda^{(1)}_{l,\varepsilon})^{(N-2)/2}}
\Big)\\&
+\frac{N-2}{2}\sum^N_{h=1} B_{\varepsilon,j,h} \Big(\frac{\partial_hR(x^{(1)}_{j,\varepsilon}) }{(\lambda^{(1)}_{j,\varepsilon})^{(N-2)/2}} -\sum^k_{l\neq j}
\frac{\partial_h G(x^{(1)}_{j,\varepsilon},x^{(1)}_{l,\varepsilon})}{(\lambda^{(1)}_{l,\varepsilon})^{(N-2)/2}}
\Big) \\&
-\frac{N-2}{2} \sum^N_{h=1}\sum^k_{l\neq j} \frac{B_{\varepsilon,l,h}}{(\lambda^{(1)}_{j,\varepsilon})^{(N-2)/2}}\partial_h G(x^{(1)}_{j,\varepsilon},x^{(1)}_{l,\varepsilon})
+o\Big(\frac{1}{\bar{\lambda}_\varepsilon^{(3N-4)/2}}\Big)\\&
+
\frac{(N-1)}{N} \sum^N_{h=1}B_{\varepsilon,j,h} \Big(\frac{ \partial_hR(x^{(1)}_{j,\varepsilon})}{(\lambda^{(1)}_{j,\varepsilon})^{N-2}}-2\sum^k_{l\neq j}\frac{ \partial_hG(x^{(1)}_{j,\varepsilon},x^{(1)}_{l,\varepsilon})}
 {(\lambda^{(1)}_{j,\varepsilon})^{(N-2)/2}(\lambda^{(1)}_{l,\varepsilon})^{(N-2)/2}}
\Big).
\end{split}
\end{equation}
\end{small}
Also, from \eqref{cc5}, \eqref{3-3}, \eqref{luopeng2}  and \eqref{lp21},  we know
\begin{equation}\label{luo4}
\begin{split}
 \text{RHS of \eqref{clp1}}=& -2\varepsilon\int_{ B_\theta\big(x^{(1)}_{j,\varepsilon}\big)} U_{x^{(1)}_{j,\varepsilon},\lambda^{(1)}_{j,\varepsilon}}(x)\xi_\varepsilon
+o\Big(\frac{1}{\bar{\lambda}_\varepsilon^{(3N-4)/2}}\Big)\\=&
\frac{2Bc_{j,0}\varepsilon}{(\lambda^{(1)}_{j,\varepsilon})^{(N+2)/2}}
+o\Big(\frac{1}{\bar{\lambda}_\varepsilon^{(3N-4)/2}}\Big)\\=&
8{d_{j,\varepsilon}} \Big(\frac{R(x^{(1)}_{j,\varepsilon})}{(\lambda^{(1)}_{j,\varepsilon})^{N-2}}-\sum^k_{l\neq j}\frac{G(x^{(1)}_{j,\varepsilon},x^{(1)}_{l,\varepsilon})}
 {(\lambda^{(1)}_{j,\varepsilon})^{(N-2)/2}(\lambda^{(1)}_{l,\varepsilon})^{(N-2)/2}}
\Big)+o\Big(\frac{1}{\bar{\lambda}_\varepsilon^{(3N-4)/2}}\Big).
\end{split}
\end{equation}
It follows from \eqref{luo-tian} that
\begin{equation}\label{luo-5}
\begin{split}
\sum^k_{h=1}&B_{\varepsilon,j,h} \Big(\frac{ \partial_hR(x^{(1)}_{j,\varepsilon})}{(\lambda^{(1)}_{j,\varepsilon})^{N-2}}-2\sum^k_{l\neq j}\frac{ \partial_hG(x^{(1)}_{j,\varepsilon},x^{(1)}_{l,\varepsilon})}
 {(\lambda^{(1)}_{j,\varepsilon})^{(N-2)/2}(\lambda^{(1)}_{l,\varepsilon})^{(N-2)/2}}
\Big) \\=&
O\Big(\sum^k_{h=1}\frac{B_{\varepsilon,j,h}}{(\lambda^{(1)}_{j,\varepsilon})^{N/2}} \big(\partial_h \Psi_k(a^k,\Lambda^k)
+o(1)\big)\Big)
=o\Big(\frac{1}{\bar{\lambda}_\varepsilon^{(3N-2)/2}}\Big).
\end{split}
\end{equation}
Hence \eqref{tian61} and \eqref{luo3}--\eqref{luo-5} imply, for $j=1,\cdots,k$,
\begin{equation}\label{luo5}
\begin{split}
 &{(N-6)d_{j,\varepsilon}} \Big(\frac{R(x^{(1)}_{j,\varepsilon})}{(\lambda^{(1)}_{j,\varepsilon})^{N-2}}-\sum^k_{l\neq j}\frac{G(x^{(1)}_{j,\varepsilon},x^{(1)}_{l,\varepsilon})}
 {(\lambda^{(1)}_{j,\varepsilon})^{(N-2)/2}(\lambda^{(1)}_{l,\varepsilon})^{(N-2)/2}}
\Big)\\&
+{(N-2)} \Big(\frac{d_{j,\varepsilon} R(x^{(1)}_{j,\varepsilon})}{(\lambda^{(1)}_{j,\varepsilon})^{N-2}}-\sum^k_{l\neq j}\frac{d_{l,\varepsilon}G(x^{(1)}_{j,\varepsilon},x^{(1)}_{l,\varepsilon})}
 {(\lambda^{(1)}_{j,\varepsilon})^{(N-2)/2}(\lambda^{(1)}_{l,\varepsilon})^{(N-2)/2}}
\Big)\\&
+\frac{N-2}{4}\sum^N_{h=1} B_{\varepsilon,j,h} \Big(\frac{\partial_hR(x^{(1)}_{j,\varepsilon}) }{(\lambda^{(1)}_{j,\varepsilon})^{(N-2)/2}} -\sum^k_{l\neq j}
\frac{\partial_h G(x^{(1)}_{j,\varepsilon},x^{(1)}_{l,\varepsilon})}{(\lambda^{(1)}_{l,\varepsilon})^{(N-2)/2}}
\Big) \\&
-\frac{N-2}{4} \sum^N_{h=1}\sum^k_{l\neq j} \frac{B_{\varepsilon,l,h}}{(\lambda^{(1)}_{j,\varepsilon})^{(N-2)/2}}\partial_h G(x^{(1)}_{j,\varepsilon},x^{(1)}_{l,\varepsilon})
=o\Big(\frac{1}{\bar{\lambda}_\varepsilon^{(3N-2)/2}}\Big).
\end{split}
\end{equation}
Let $a^k=(a_1,\cdots,a_k)\in \Omega^k$ and $a_j=(y_{(j-1)N+1},y_{(j-1)N+2},\cdots,y_{jN})\in \Omega$.
Since for any $i=\{1,\cdots,kN\}$, there exists some $j\in \{1,\cdots,k\}$ satisfying $i\in [(j-1)N+1,jN]\bigcap \N^+$. Then by direct calculation, we have
 \begin{equation*}
\frac{\partial^2\Psi_k(x,\lambda)}{\partial y_i\partial \lambda_m}=
 (N-2)\Big(\lambda_{j}^{N-3}
\frac{\partial R(x_j)}{\partial y_i}- \displaystyle\sum^k_{l\neq j}\lambda_{j}^{\frac{N-4}{2}}\lambda_{l}^{\frac{N-2}{2}}
\frac{\partial G(x_j,x_l)}{\partial y_i}\Big),~\mbox{if}~m\in [(j-1)N+1,jN]\bigcap \N^+,
\end{equation*}
and
 \begin{equation*}
\frac{\partial^2\Psi_k(x,\lambda)}{\partial y_i\partial \lambda_m}=
-(N-2)\lambda_{j}^{\frac{N-2}{2}}\lambda_{s}^{\frac{N-4}{2}}
\frac{\partial G(x_j,x_s)}{\partial y_i},~\mbox{if}~ m\in [(s-1)N+1,sN]\bigcap \N^+~\mbox{and}~s\neq j.
\end{equation*}
Now we rewrite \eqref{luo5} as follows:
\begin{equation}\label{tian62}
\begin{split}
&\overline{M}_{k,\varepsilon} \vec{D}_{k}+\frac{2}{A^2}
\widetilde{D}_{k}
\Big(D^2_{\lambda,x}\Psi_k(x,\lambda)\Big)_{(x,\lambda)=(a^k,\Lambda^k)}
\widetilde{B}_{\varepsilon,k} =\Big(o\big(\frac{1}{\bar{\lambda}_\varepsilon^{N-1}}\big),
\cdots,o\big(\frac{1}{\bar{\lambda}_\varepsilon^{N-1}}\big)\Big)^T,
\end{split}\end{equation}
with the vector $\vec{D}_{k}$, the matrix $\overline{M}_{k,\varepsilon}=\big(a_{i,j,\varepsilon}\big)_{1\leq i,j\leq k}$ defined by
\begin{equation*}
\vec{D}_{k}=\big(c_{1,0},\cdots,c_{k,0}\big)^T,~
\widetilde{D}_{k}=diag \big(\lambda^{\frac{4-N}{2}}_{1},\cdots,\lambda^{\frac{4-N}{2}}_{k}\big),
~\mbox{with}~ \lambda_{j}:=\lim_{\varepsilon\to 0}\Big(\varepsilon ^{\frac{1}{N-4}}\lambda_{j,\varepsilon}\Big)^{-1},
\end{equation*}
\begin{equation*}
~a_{i,j,\varepsilon}=
\begin{cases}
\frac{2(N-4)R(x^{(1)}_{j,\varepsilon})}{(\lambda^{(1)}_{j,\varepsilon})^{N-2}}-
 \sum^k_{l\neq j}\frac{(N-6)G(x^{(1)}_{j,\varepsilon},x^{(1)}_{l,\varepsilon})}
 {(\lambda^{(1)}_{j,\varepsilon})^{(N-2)/2}(\lambda^{(1)}_{l,\varepsilon})^{(N-2)/2}},& \mbox{for}~i=j,\\[1.5mm]
 -\frac{(N-2)G(x^{(1)}_{j,\varepsilon},x^{(1)}_{i,\varepsilon})}
 {(\lambda^{(1)}_{j,\varepsilon})^{(N-2)/2}(\lambda^{(1)}_{i,\varepsilon})^{(N-2)/2}},& \mbox{for}~i\neq j,
\end{cases}
\end{equation*}
and
\begin{equation}\label{7-30-1}
\widetilde{B}_{\varepsilon,k}=\big(\bar B_{\varepsilon,1},\cdots,\bar B_{\varepsilon,Nk}\big)^T,~\mbox{with}~
\bar B_{\varepsilon,m}=B_{\varepsilon,j,h},~ m=h+N(j-1).
\end{equation}
Since $\overline{M}_{k,\varepsilon}$ is the main diagonally dominant matrix if $N\geq 6$, we see that $\overline{M}_{k,\varepsilon}$ is invertible. So \eqref{tian62} means \eqref{luo-13}.
Then from \eqref{luo31}  and \eqref{tian62}, we obtain
\begin{equation}\label{luo13}
A_{\varepsilon,j}=
o\Big(\frac{1}{\bar{\lambda}_{\varepsilon}^{N-1}}\Big),
~\mbox{for}~j=1,\cdots,k~\mbox{and}~N\geq 6.
\end{equation}
Therefore we find
 \begin{equation}\label{dddtian62}
\begin{split}
\Big(D^2_{\lambda,x}\Psi_k(x,\lambda)\Big)_{(x,\lambda)=(a^k,\Lambda^k)}
\widetilde{B}_{\varepsilon,k} =\Big(o\big(\frac{1}{\bar{\lambda}_\varepsilon^{N-1}}\big),
\cdots,o\big(\frac{1}{\bar{\lambda}_\varepsilon^{N-1}}\big)\Big)^T.
\end{split}\end{equation}
\end{proof}
\begin{Prop}\label{prop-luo}
For $N\geq 6$, it holds
\begin{equation}\label{luo--13}
c_{j,i}=0,~\mbox{for}~j=1,\cdots,k~\mbox{and}~i=1,\cdots,N,
\end{equation}
where $c_{j,i}$ are the constants in Proposition \ref{prop3-2}.
\end{Prop}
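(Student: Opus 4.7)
The strategy follows that of Proposition \ref{prop-luo1} very closely, trading the dilation-type Pohozaev identity \eqref{clp1} for the translation-type identity \eqref{a2}, which is naturally paired with the $x$-variables rather than with the scaling parameters. Concretely, I would apply \eqref{a2} with $\Omega'=B_\theta(x^{(1)}_{j,\varepsilon})$ for each $j=1,\dots,k$ and each direction $i=1,\dots,N$, producing $Nk$ scalar equations indexed by the same pairs $(j,i)$ that index the unknown vector $\widetilde{B}_{\varepsilon,k}$ of \eqref{7-30-1}. A key preliminary observation is that the LHS of \eqref{a2} collapses, after cancellations, to $\tfrac12 Q(u^{(1)}_\varepsilon+u^{(2)}_\varepsilon,\xi_\varepsilon)$, where $Q$ is the quadratic form from Proposition \ref{prop--1}; this is $\theta$-invariant as soon as both arguments are harmonic on the punctured ball.

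Next I would plug in the expansion \eqref{ccc} for $u^{(1)}_\varepsilon+u^{(2)}_\varepsilon$ and \eqref{laa1} for $\xi_\varepsilon$, so that the LHS becomes a linear combination of the symbols $Q\!\bigl(G(x^{(1)}_{m,\varepsilon},\cdot),G(x^{(1)}_{l,\varepsilon},\cdot)\bigr)$ and $Q\!\bigl(G(x^{(1)}_{m,\varepsilon},\cdot),\partial_h G(x^{(1)}_{l,\varepsilon},\cdot)\bigr)$. Invoking the bound $A_{\varepsilon,l}=o(\bar\lambda_\varepsilon^{-(N-1)})$ from \eqref{luo13}, the first family (whose values are recorded in \eqref{luo1}) contributes only an error of size $o(\bar\lambda_\varepsilon^{-(3N-4)/2})$. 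The remaining terms form the heart of the argument; I would defer the computation of the mixed symbol to Section \ref{s5}, in complete analogy with \eqref{luo1} and \eqref{bb1-1}, and anticipate its outcome: a constant multiple of $\partial^2_{ih}R(x^{(1)}_{j,\varepsilon})$ when $l=m=j$, a constant multiple of $\partial^2_{x_iy_h}G(x^{(1)}_{j,\varepsilon},x^{(1)}_{l,\varepsilon})$ when $m=j$, $l\neq j$ (and symmetrically when $m\neq j$, $l=j$), and zero when both $l$ and $m$ differ from $j$. Combining this with an accompanying estimate of the RHS of \eqref{a2} (using \eqref{cc7}, \eqref{luopeng2}, \eqref{ddlp21} and the vanishing $\nabla_x\Psi_k(a^k,\Lambda^k)=0$ from \eqref{4-18-1} to kill the leading first-order pieces), the resulting system will read, in the same normalization as \eqref{tian62},
\begin{equation*}
\widetilde{D}_k\,D^2_{xx}\Psi_k(a^k,\Lambda^k)\,\widetilde{B}_{\varepsilon,k} \;=\; o\!\left(\bar\lambda_\varepsilon^{-(N-1)}\right).
\end{equation*}

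The punchline then combines this with \eqref{dddtian62} of Proposition \ref{prop-luo1}, namely $D^2_{\lambda,x}\Psi_k(a^k,\Lambda^k)\,\widetilde{B}_{\varepsilon,k}=o(\bar\lambda_\varepsilon^{-(N-1)})$. Stacked together, the two identities express that $\widetilde{B}_{\varepsilon,k}$ is sent to $o(\bar\lambda_\varepsilon^{-(N-1)})$ by the first $Nk$ columns of the full Hessian $D^2\Psi_k(a^k,\Lambda^k)$; nondegeneracy of $(a^k,\Lambda^k)$ as a critical point forces these columns to be linearly independent, hence $\widetilde{B}_{\varepsilon,k}=o(\bar\lambda_\varepsilon^{-(N-1)})$. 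On the other hand, changing variables $y=\lambda^{(1)}_{j,\varepsilon}(x-x^{(1)}_{j,\varepsilon})$ in the definition \eqref{luo51} of $B_{\varepsilon,j,i}$ and using \eqref{cc7}, combined with the odd--even symmetry of the $\psi_l$'s (so that only the mode $l=i$ survives in the integral $\int_{\R^N} y_i\,U_{0,1}^{4/(N-2)}(y)\,\psi_l(y)\,dy$), yields
\begin{equation*}
B_{\varepsilon,j,i} \;=\; \frac{K_N\,c_{j,i}}{(\lambda^{(1)}_{j,\varepsilon})^{N-1}}+o\!\left(\bar\lambda_\varepsilon^{-(N-1)}\right),\qquad K_N:=\tfrac{N+2}{N-2}\int_{\R^N} y_i\,U_{0,1}^{4/(N-2)}(y)\,\psi_i(y)\,dy\;\neq\;0,
\end{equation*}
and comparing the two expressions for $B_{\varepsilon,j,i}$ forces $c_{j,i}=0$, which is \eqref{luo--13}.

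The step I expect to be the main obstacle is the Section \ref{s5}-type residue computation of $Q\!\bigl(G(x^{(1)}_{m,\varepsilon},\cdot),\partial_h G(x^{(1)}_{l,\varepsilon},\cdot)\bigr)$: it requires the decomposition $G=S-H$, a delicate tracking of how the singular and regular parts combine on $\partial B_\theta(x^{(1)}_{j,\varepsilon})$, and a clean identification of the output with second derivatives of $R$ and $G$ so that the matrix $D^2_{xx}\Psi_k$ emerges cleanly. A secondary subtlety is to verify that the stacked system really does involve the \emph{full} $x$-column block of $D^2\Psi_k$ (rather than a proper sub-block), so that the nondegeneracy assumption can actually be leveraged to recover $\widetilde{B}_{\varepsilon,k}$ modulo $o(\bar\lambda_\varepsilon^{-(N-1)})$.
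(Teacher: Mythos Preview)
Your proposal is correct and follows essentially the same approach as the paper: apply \eqref{a2} on $B_\theta(x^{(1)}_{j,\varepsilon})$, expand via \eqref{ccc} and \eqref{laa1}, use \eqref{luo13} to kill the $A_{\varepsilon,l}$ contributions, evaluate the residue $Q_1\bigl(G(x^{(1)}_{m,\varepsilon},\cdot),\partial_hG(x^{(1)}_{l,\varepsilon},\cdot)\bigr)$ (this is exactly the paper's \eqref{luo41}), recognise the resulting system as $D^2_{xx}\Psi_k(a^k,\Lambda^k)\,\widetilde{B}_{\varepsilon,k}=o(\bar\lambda_\varepsilon^{-(N-1)})$, stack with \eqref{dddtian62}, and invoke nondegeneracy plus the direct computation of $B_{\varepsilon,j,i}$ in terms of $c_{j,i}$ to finish.

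One small simplification: the RHS of \eqref{a2} consists \emph{only} of surface integrals over $\partial B_\theta(x^{(1)}_{j,\varepsilon})$, so the far-field bounds \eqref{3-3} and \eqref{ccc} already give $O(\bar\lambda_\varepsilon^{-(3N-2)/2})$ directly; there are no ``leading first-order pieces'' to kill, and the tools \eqref{cc7}, \eqref{luopeng2}, \eqref{ddlp21}, $\nabla_x\Psi_k=0$ you list are not needed here (you may be thinking of the volume term in \eqref{clp1}, which is absent in \eqref{a2}).
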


\begin{proof}
First taking $\Omega'=B_\theta\big(x^{(1)}_{j,\varepsilon}\big)$ in \eqref{a2}, from \eqref{laa1} and \eqref{ccc}, we have
\begin{equation}\label{luo21}
\begin{split}
 \text{LHS of \eqref{a2}}=&\sum^k_{l=1}\sum^k_{m=1}\frac{AA_{\varepsilon,l}Q_1
 \Big(G(x^{(1)}_{m,\varepsilon},x),G(x^{(1)}_{l,\varepsilon},x)\Big)}{(\lambda^{(1)}_{m,\varepsilon})^{(N-2)/2}}
\\&+
\sum^k_{l=1}\sum^N_{h=1}\sum^k_{m=1}\frac{AB_{\varepsilon,l,h}Q_1
\Big(G(x^{(1)}_{m,\varepsilon},x),\partial_hG(x^{(1)}_{l,\varepsilon},x)\Big)}{(\lambda^{(1)}_{m,\varepsilon})^{(N-2)/2}}
+O\Big(\frac{\ln \bar{\lambda}_\varepsilon}{\bar{\lambda}_\varepsilon^{(3N-2)/2}}\Big),
\end{split}
\end{equation}
and
\begin{equation}\label{luo22}
\begin{split}
 \text{RHS of \eqref{a2}}= O\Big(\frac{1}{\bar{\lambda}_\varepsilon^{(3N-2)/2}}\Big).
\end{split}
\end{equation}
Then from \eqref{luo13}, \eqref{luo21} and \eqref{luo22}, we find
\begin{equation}\label{luo24}
\begin{split}
\sum^k_{l=1}\sum^N_{h=1}\sum^k_{m=1}\frac{B_{\varepsilon,l,h}Q_1
\Big(G(x^{(1)}_{m,\varepsilon},x),\partial_hG(x^{(1)}_{l,\varepsilon},x)\Big)}{(\lambda^{(1)}_{m,\varepsilon})^{(N-2)/2}}
=o\Big(\frac{1}{\bar{\lambda}_\varepsilon^{(3N-4)/2}}\Big).
\end{split}
\end{equation}
Also we have the following estimates for which the proof is postponed until Section 5:
\begin{equation}\label{luo41}
Q_1\Big(G(x^{(1)}_{m,\varepsilon},x),\partial_h G(x^{(1)}_{l,\varepsilon},x)\Big)=
\begin{cases}
-\partial^2_{x_ix_h}R(x^{(1)}_{j,\varepsilon}),~&\mbox{for}~l,m=j,\\[1mm]
D^2_{x_ix_h}G(x^{(1)}_{m,\varepsilon},x^{(1)}_{j,\varepsilon}),
 ~&\mbox{for}~m\neq j,~l=j,\\[1mm]
D_{x_i}\partial_hG(x^{(1)}_{l,\varepsilon},x^{(1)}_{j,\varepsilon}),
~&\mbox{for}~m= j,~l\neq j,\\[1mm]
0,~&\mbox{for}~l,m\neq j.
\end{cases}
\end{equation}
Then \eqref{luo24} and \eqref{luo41} imply
\begin{equation}\label{7-30-2}
\begin{split}
\sum^N_{h=1} B_{\varepsilon,j,h}&\Big(\frac{\partial^2_{x_ix_h}R(x^{(1)}_{j,\varepsilon})}
{(\lambda^{(1)}_{j,\varepsilon})^{(N-2)/2}}-\sum_{l\neq j}\frac{
 \partial^2_{x_i  x_h}G(x^{(1)}_{j,\varepsilon},x^{(1)}_{l,\varepsilon})}{{(\lambda^{(1)}_{l,\varepsilon})^{(N-2)/2}}}\Big)\\&
-
\sum^N_{h=1}\sum_{m\neq j}\frac{B_{\varepsilon,m,h}
D_{x_h}\partial_{x_i}G(x^{(1)}_{j,\varepsilon},x^{(1)}_{m,\varepsilon})}
{{(\lambda^{(1)}_{j,\varepsilon})^{(N-2)/2}}}=o\Big(\frac{1}{\bar{\lambda}_\varepsilon^{(3N-4)/2}}\Big).
\end{split}
\end{equation}
Since for any $i=\{1,\cdots,kN\}$, there exists some $j\in \{1,\cdots,k\}$ satisfying $i\in [(j-1)N+1,jN]\bigcap \N^+$, by direct calculations, we have
\begin{equation*}
\frac{\partial^2\Psi_k(x,\lambda)}{\partial y_i\partial y_m}=
\lambda_{j}^{N-2}
\frac{\partial^2R(x_j)}{\partial y_i\partial y_m}- \displaystyle\sum^k_{l\neq j}\lambda_{j}^{\frac{N-2}{2}}\lambda_{l}^{\frac{N-2}{2}}
\frac{\partial^2 G(x_j,x_l)}{\partial y_i\partial y_m},~\mbox{if}~m\in [(j-1)N+1,jN]\bigcap \N^+,
\end{equation*}
and
 \begin{equation*}
\frac{\partial^2\Psi_k(x,\lambda)}{\partial y_i\partial y_m}=
-\lambda_{j}^{\frac{N-2}{2}}\lambda_{s}^{\frac{N-2}{2}}
\frac{\partial^2 G(x_j,x_s)}{\partial y_i\partial y_m},~\mbox{if}~ m\in [(s-1)N+1,sN]\bigcap \N^+~\mbox{and}~s\neq j.
\end{equation*}
So from \eqref{7-30-2}, we can obtain
\begin{equation}\label{lll123}
\Big(D^2_{xx}\Psi_k(x,\lambda)\Big)_{(x,\lambda)=(a^k,\Lambda^k)}
\widetilde{B}_{\varepsilon,k}=\Big(o\big(\frac{1}{\bar{\lambda}_\varepsilon^{N-1}}\big),\cdots,
o\big(\frac{1}{\bar{\lambda}_\varepsilon^{N-1}}\big)\Big)^T,
\end{equation}
where
$\widetilde{B}_{\varepsilon,k}$ is the vector in \eqref{7-30-1}.
Noting that $(a^k, \Lambda^k)$ is a nondegenerate  critical point of $\Psi_{k}$, we see
\begin{equation}\label{adddtian62}
\mbox{Rank}~\Big(D^2_{(x,\lambda)x}\Psi_k(x,\lambda)\Big)_{(x,\lambda)=(a^k,\Lambda^k)}=Nk.
\end{equation}
Hence \eqref{dddtian62}, \eqref{lll123} and \eqref{adddtian62} imply  that
\begin{equation}\label{luo52}
B_{\varepsilon,j,h}=o\Big(\frac{1}{\bar{\lambda}_\varepsilon^{N-1}}\Big), ~\mbox{for}~ j=1,\cdots,k~\mbox{and}~
h=1,\cdots,N.
\end{equation}
On the other hand, from \eqref{cc7}, \eqref{cc8} and \eqref{luo51}, we find
 \begin{equation}\label{7-15-1}
\begin{split}
B_{\varepsilon,j,h}=&\int_{B_{\lambda^{(1)}_{j,\varepsilon}d}(0)} x_h C_\varepsilon(\frac{x}{\lambda^{(1)}_{j,\varepsilon}}+x^{(1)}_{j,\varepsilon})\xi_{\varepsilon,j}(x)dx
\\=&\frac{1}{\big(\lambda^{(1)}_{j,\varepsilon}\big)^{N-1}}\int_{\R^N} x_h U_{0,1}^{\frac{4}{N-2}}\Big(\sum^N_{l=1}c_{j,l} \psi_l(x)\Big)dx
+o\Big(\frac{1}{\bar{\lambda}_\varepsilon^{N-1}}\Big)
\\=&-\frac{N-2}{2N}\int_{\R^N}\frac{|x|^2}{(1+|x|^2)^{\frac{N}{2}}} \frac{c_{j,h}}{\big(\lambda^{(1)}_{j,\varepsilon}\big)^{N-1}}
+o\Big(\frac{1}{\bar{\lambda}_\varepsilon^{N-1}}\Big).\end{split}\end{equation}
Then \eqref{luo52} and \eqref{7-15-1} imply \eqref{luo--13}.
\end{proof}

We are now ready to show Theorem \ref{th1.1}.

\begin{proof}[\textbf{Proof of Theorem \ref{th1.1}}]
For any given $a^k=(a_1,\cdots,a_{k})$, since $M_k(a^k)$ is a positive matrix and
  $(a^k,\Lambda^k)$ is a nondegenerate critical point of $\Psi_k$, then from \cite{Musso1}, we find a  solution of \eqref{1.1} with \eqref{4-6-1}.
Next, we prove the local uniqueness of solutions to \eqref{1.1} with \eqref{4-6-1}.

From \eqref{cc6}, it holds
\begin{equation*}
|\xi_{\varepsilon}(x)|=O\Big(\frac{1}{R^2}\Big)+O(\varepsilon),~\mbox{for}~
x\in\Omega\backslash\bigcup_{j=1}^kB_{R(\lambda^{(1)}_{j,\varepsilon})^{-1}}(x_{j,\varepsilon}^{(1)}),
\end{equation*}
which implies that for any fixed $\gamma\in (0,1)$ and small $\varepsilon$, there exists $R_1>0$,
\begin{equation}\label{tian92}
|\xi_{\varepsilon}(x)|\leq \gamma,~ x\in\Omega\backslash\bigcup_{j=1}^kB_{R_1(\lambda^{(1)}_{j,\varepsilon})^{-1}}(x_{j,\varepsilon}^{(1)}).
\end{equation}
Also for the above fixed $R_1$, from \eqref{luo-13} and \eqref{luo--13}, we have
\begin{equation*}
\xi_{\varepsilon,j}(x)=o(1)~\mbox{in}~ B_{R_1}(0),~j=1,\cdots,k.
\end{equation*}
We know $\xi_{\varepsilon,j}(x)=\xi_{\varepsilon}(
\frac{x}{\lambda^{(1)}_{j,\varepsilon}}+x_{j,\varepsilon}^{(1)})$, so
\begin{equation}\label{tian91}
\xi_{\varepsilon}(x)=o(1),~x\in \bigcup_{j=1}^k B_{R_1(\lambda^{(1)}_{j,\varepsilon})^{-1}}(x_{j,\varepsilon}^{(1)}).
\end{equation}
Hence for any fixed $\gamma\in (0,1)$ and small $\varepsilon$, \eqref{tian92} and \eqref{tian91} imply
$|\xi_{\varepsilon}(x)|\leq \gamma$ for all $x\in \Omega$,
which is in contradiction with $\|\xi_{\varepsilon}\|_{L^{\infty}(\Omega)}=1$. Aa a result, $u^{(1)}_{\varepsilon}(x)\equiv u^{(2)}_{\varepsilon}(x)$ for small $\varepsilon$.
\end{proof}
\begin{Rem}\label{Rem-luo1}
Here we point out the reasons why  our methods are  unsuitable for $N=5$.
In fact, a first problem is that we can only get $\big|x^{(1)}_{j,\varepsilon}-x^{(2)}_{j,\varepsilon}\big|=O\Big(\frac{1}{\bar \lambda_{\varepsilon}}\Big)$ for $N=5$. However, to obtain
\begin{equation*}
\xi_{\varepsilon,j}(x)\to \sum_{i=0}^N c_{j,i}\psi_{i}(x),~\mbox{uniformly in}~C^1\big(B_R(0)\big) ~\mbox{for any}~R>0
\end{equation*}
as in Proposition \ref{prop3-2},
a necessary estimate is  $\big|x^{(1)}_{j,\varepsilon}-x^{(2)}_{j,\varepsilon}\big|=o\Big(\frac{1}{\bar \lambda_{\varepsilon}}\Big)$.
On the other hand, even if we would obtain  more precise estimate $\big|x^{(1)}_{j,\varepsilon}-x^{(2)}_{j,\varepsilon}\big|=o\Big(\frac{1}{\bar \lambda^2_{\varepsilon}}\Big)$ for $N=5$,
then similar to the estimate of \eqref{luo13}, we can find  $
A_{\varepsilon,j}=
O\Big(\frac{\ln \bar{\lambda}_{\varepsilon}}{\bar{\lambda}_{\varepsilon}^{4}}\Big)$,
 for $j=1,\cdots,k$,
which and \eqref{luo31} imply $c_{j,0}=0$, for $j=1,\cdots,k$.
But similar to the estimate of \eqref{luo52}, we can only find
 \begin{equation}\label{aaaluo52}
B_{\varepsilon,j,i}=O\Big(\frac{\ln \bar{\lambda}_\varepsilon}{\bar{\lambda}_\varepsilon^{4}}\Big), ~\mbox{for}~ j=1,\cdots,k~\mbox{and}~
i=1,\cdots,5.
\end{equation}
Then from \eqref{luo51} and \eqref{aaaluo52}, we can only get $c_{j,i}=O\big( \ln \bar{\lambda}_{\varepsilon}\big)$, for $j=1,\cdots,k$
 and $i=1,\cdots,5$.
 Why above phenomena occur is that the error estimate $w_\varepsilon$ is not enough for us in Proposition
 \ref{prop-A.2} when $N=5$. However
 the error estimate $w_\varepsilon$ in Proposition
 \ref{prop-A.2} is basic and cann't be improved.
\end{Rem}

Now we are in the position to show  Theorem \ref{th1.2}.

\begin{proof}[\textbf{Proof of Theorem \ref{th1.2}}]
First, \textbf{Assumption A} implies that the solution of \eqref{1.1} must blow up. In fact, if $u_\varepsilon(x)$ is a  solution of \eqref{1.1} which does not blow up, then letting $\varepsilon\rightarrow 0$, we can find a  nontrivial solution of \eqref{4-18-21}, which is a contradiction with  \textbf{Assumption A}.
Also \cite{Cerqueti} gives us that  all
blow-up points are isolated and it implies the uniform bound $\int_{\Omega}|\nabla u_\varepsilon|^2dx\leq C$, for some positive
constant $C$.  Now by the global compactness result in \cite{Struwe}, $u_\varepsilon(x)$ can be written as
\begin{equation*}
u_{\varepsilon}=u_0+\sum^k_{j=1} PU_{x_{j,\varepsilon}, \lambda_{j,\varepsilon}}+w_{\varepsilon}(x),
\end{equation*}
where
\begin{equation*}
 x _{j,\varepsilon}\rightarrow a_j,~\mbox{as}~\varepsilon\rightarrow  0, \lambda_{j,\varepsilon}\rightarrow \infty,~\|w_{\varepsilon}\|_\varepsilon=o(1),
\end{equation*}
and $u_0$ is a nonnegative solution of $-\Delta u= u^{\frac{N+2}{N-2}}$ in $\Omega$. By maximum principle and \textbf{Assumption A}, we get $u_0=0$. Then using Theorem \ref{th1.1}, the number of solutions to \eqref{1.1} with $k$ blow-up points is $\sharp T_k$. Since the number of the  blow-up points to \eqref{1.1} are finite,  we complete the proof of Theorem \ref{th1.2}.
\end{proof}
\section{Key estimates on Green's function}\label{s5}
In this section, we give  proofs of \eqref{luo2},  \eqref{luo1}, \eqref{bb1-1}  and \eqref{luo41} involving Green's function, which have been
used in sections \ref{s3} and \ref{s4}.
\begin{proof} [\underline{\textbf{Proof of \eqref{luo2}}}]
By the bilinearity of $P(u,v)$, we have
\begin{equation}\label{tian21}
\begin{split}
P\Big(G(x_{j,\varepsilon},x),G(x_{j,\varepsilon},x)\Big)=&
 P\Big(S(x_{j,\varepsilon},x),S(x_{j,\varepsilon},x)\Big)
 -2P\Big(S(x_{j,\varepsilon},x),H(x_{j,\varepsilon},x)\Big)\\&
 +P\Big(H(x_{j,\varepsilon},x),H(x_{j,\varepsilon},x)\Big).
 \end{split}
\end{equation}
After direct calculations, we know
\begin{equation}\label{laa}
\begin{split}
D_{x_i} S(x_{j,\varepsilon},x) =-\frac{x_i-x_{j,\varepsilon,i}}{\omega_N|x_{j,\varepsilon}-x|^{N}},~\mbox{and}~\nu_i=
\frac{x_i-x_{j,\varepsilon,i}}{|x_{j,\varepsilon}-x|}.
 \end{split}
\end{equation}
Putting \eqref{laa} in the term $P\Big(S(x_{j,\varepsilon},x),S(x_{j,\varepsilon},x)\Big)$, we get
\begin{equation}\label{tian22}
P\Big(S(x_{j,\varepsilon},x),S(x_{j,\varepsilon},x)\Big)=0.
\end{equation}
Now we calculate $P\Big(S(x_{j,\varepsilon},x),H(x_{j,\varepsilon},x)\Big)$.  Since $D_{\nu} H(x_{j,\varepsilon},x)$   is bounded in $B_d(x_{j,\varepsilon})$, we know
\begin{equation}\label{tian23}
\begin{split}
\theta\int_{\partial B_\theta(x_{j,\varepsilon})}& \big\langle D S(x_{j,\varepsilon},x),\nu\big\rangle \big\langle
D H(x_{j,\varepsilon},x),\nu \big\rangle=O\Big(\theta\int_{\partial B_\theta(x_{j,\varepsilon})}|x-x_{j,\varepsilon}|^{-(N-1)}\Big)=O(\theta),
\end{split}
\end{equation}
\begin{equation}\label{tian24}
\begin{split}
\theta\int_{\partial B_\theta(x_{j,\varepsilon})}& \big\langle D S(x_{j,\varepsilon},x),
D H(x_{j,\varepsilon},x) \big\rangle=O\Big(\theta\int_{\partial B_\theta(x_{j,\varepsilon})}|x-x_{j,\varepsilon}|^{-(N-1)}\Big)=O(\theta),
\end{split}
\end{equation}
and
\begin{equation}\label{tian25}
\begin{split}
 \int_{\partial B_\theta(x_{j,\varepsilon})}& \big\langle D H(x_{j,\varepsilon},x),\nu\big\rangle   S(x_{j,\varepsilon},x)  =O\Big( \int_{\partial B_\theta(x_{j,\varepsilon})}|x-x_{j,\varepsilon}|^{-(N-2)}\Big)=O(\theta).
\end{split}
\end{equation}
Next, we obtain
\begin{equation}\label{tian26}
\begin{split}
 \int_{\partial B_\theta(x_{j,\varepsilon})}& \big\langle D S(x_{j,\varepsilon},x),\nu\big\rangle   H(x_{j,\varepsilon},x)\\=&
  H(x_{j,\varepsilon},x_{j,\varepsilon})\int_{\partial B_\theta(x_{j,\varepsilon})} \big\langle D S(x_{j,\varepsilon},x),\nu\big\rangle +O(\theta)= -R(x_{j,\varepsilon}) +O(\theta).
\end{split}
\end{equation}
Then from \eqref{tian23}--\eqref{tian26}, we get
\begin{equation}\label{tian27}
P\Big(S(x_{j,\varepsilon},x),H(x_{j,\varepsilon},x)\Big)=\frac{N-2}{4}R(x_{j,\varepsilon})
+O(\theta).
\end{equation}
Also since $H(x_{j,\varepsilon},x)$ and $D_{\nu} H(x_{j,\varepsilon},x)$ are bounded in $B_d(x_{j,\varepsilon})$, it holds that
\begin{equation}\label{tian28}
P\Big(H(x_{j,\varepsilon},x),H(x_{j,\varepsilon},x)\Big)=O(\theta^{N-1}).
\end{equation}
Letting $\theta\rightarrow 0$, from \eqref{tian21}, \eqref{tian22}, \eqref{tian27} and \eqref{tian28}, we get
\begin{equation*}
\begin{split}
P\Big(G(x_{j,\varepsilon},x),G(x_{j,\varepsilon},x)\Big)=-\frac{N-2}{2}R(x_{j,\varepsilon}).
 \end{split}
\end{equation*}
Next, for $m\neq j$,
\begin{equation}\label{tian29}
\begin{split}
P\Big(G(x_{j,\varepsilon},x),G(x_{m,\varepsilon},x)\Big)=&
P\Big(S(x_{j,\varepsilon},x),G(x_{m,\varepsilon},x)\Big)
 - P\Big(H(x_{j,\varepsilon},x),G(x_{m,\varepsilon},x)\Big).
 \end{split}
\end{equation}
Since $D_{\nu} G(x_{m,\varepsilon},x)$   is bounded in $B_d(x_{j,\varepsilon})$, we know
\begin{equation}\label{tian30}
\begin{split}
\theta\int_{\partial B_\theta(x_{j,\varepsilon})}& \big\langle D S(x_{j,\varepsilon},x),\nu \big\rangle \big\langle
D G(x_{m,\varepsilon},x),\nu  \big\rangle=O\Big(\theta\int_{\partial B_\theta(x_{j,\varepsilon})}|x-x_{j,\varepsilon}|^{-(N-1)}\Big)=O \big(\theta \big),
\end{split}
\end{equation}
\begin{equation}\label{tian31}
\begin{split}
\theta\int_{\partial B_\theta(x_{j,\varepsilon})}&  \big\langle D S(x_{j,\varepsilon},x),
D G(x_{m,\varepsilon},x)  \big\rangle=O\Big(\theta\int_{\partial B_\theta(x_{j,\varepsilon})}|x-x_{j,\varepsilon}|^{-(N-1)}\Big)=O \big(\theta \big),
\end{split}
\end{equation}
\begin{equation}\label{tian32}
\begin{split}
 \int_{\partial B_\theta(x_{j,\varepsilon})}&  \big\langle D G(x_{m,\varepsilon},x),\nu \big\rangle   S(x_{j,\varepsilon},x)  =O\Big( \int_{\partial B_\theta(x_{j,\varepsilon})}|x-x_{j,\varepsilon}|^{-(N-2)}\Big)=O\big(\theta\big),
\end{split}
\end{equation}
and
\begin{equation}\label{tian33}
\begin{split}
\int_{\partial B_\theta(x_{j,\varepsilon})}& \big\langle D S(x_{j,\varepsilon},x),\nu \big\rangle   G(x_{m,\varepsilon},x)  \\
=&  G \big(x_{m,\varepsilon},x_{j,\varepsilon} \big) \int_{\partial B_\theta(x_{j,\varepsilon})}D_{\nu} S(x_{j,\varepsilon},x) +O \big(\theta \big)\int_{\partial B_\theta(x_{j,\varepsilon})} \big|D_{\nu} S(x_{j,\varepsilon},x)  \big|\\
=&- G \big(x_{m,\varepsilon},x_{j,\varepsilon} \big) +O \big(\theta \big).
\end{split}
\end{equation}
The combination of  \eqref{tian30}--\eqref{tian33} gives
\begin{equation}\label{tian34}
P\Big(S(x_{j,\varepsilon},x),G(x_{m,\varepsilon},x)\Big)=\frac{N-2}{4}G \big(x_{m,\varepsilon},x_{j,\varepsilon} \big)+O \big(\theta \big).
\end{equation}
Also since $H(x_{j,\varepsilon},x)$, $D_{\nu} H(x_{j,\varepsilon},x)$, $G(x_{m,\varepsilon},x)$ and $D_{\nu}G(x_{m,\varepsilon},x)$ are bounded in $B_d(x_{j,\varepsilon})$, it holds
\begin{equation}\label{tian35}
P\Big(H(x_{j,\varepsilon},x),G(x_{m,\varepsilon},x)\Big)=O \big(\theta^{N-1} \big).
\end{equation}
Letting $\theta\rightarrow 0$, from \eqref{tian29}, \eqref{tian34} and \eqref{tian35}, we know
\begin{equation}\label{la-1}
P\Big(G(x_{j,\varepsilon},x),G(x_{m,\varepsilon},x)\Big)= \frac{N-2}{4}G \big(x_{m,\varepsilon},x_{j,\varepsilon} \big),~\mbox{for}~m\neq j.
\end{equation}
By the symmetry of $P(u,v)$, \eqref{la-1} implies
\begin{equation*}
P\Big(G(x_{m,\varepsilon},x),G(x_{j,\varepsilon},x)\Big)= \frac{N-2}{4}G \big(x_{m,\varepsilon},x_{j,\varepsilon} \big),~\mbox{for}~m\neq j.
\end{equation*}

Finally, for $l,m\neq j$, the boundedness of $G(x_{m,\varepsilon},x)$, $D_{\nu}G(x_{m,\varepsilon},x)$, $G(x_{l,\varepsilon},x)$ and $D_{\nu}G(x_{l,\varepsilon},x)$ are bounded in $B_d(x_{j,\varepsilon})$ yields that $P\Big(G(x_{l,\varepsilon},x),G(x_{m,\varepsilon},x)\Big)=O \big(\theta^{N-1} \big)$.
Letting $\theta\rightarrow 0$, we know
\begin{equation*}
P\Big(G(x_{l,\varepsilon},x),G(x_{m,\varepsilon},x)\Big)=0,~\mbox{for}~l,m\neq j.
\end{equation*}
\end{proof}

\begin{proof} [\underline{\textbf{Proof of \eqref{luo1}}}]
First, by the bilinearity of $Q(u,v)$, we have
\begin{equation}\label{tian123}
\begin{split}
Q \Big(G(x_{j,\varepsilon},x),G(x_{j,\varepsilon},x)\Big)=&
 Q\Big(S(x_{j,\varepsilon},x),S(x_{j,\varepsilon},x)\Big)
 -2Q\Big(S(x_{j,\varepsilon},x),H(x_{j,\varepsilon},x)\Big)\\&
 +Q\Big(H(x_{j,\varepsilon},x),H(x_{j,\varepsilon},x)\Big).
 \end{split}
\end{equation}
Then for $Q\Big(S(x_{j,\varepsilon},x),S(x_{j,\varepsilon},x)\Big)$, the oddness of integrands  means
\begin{equation}\label{tian-7}
Q\Big(S(x_{j,\varepsilon},x),S(x_{j,\varepsilon},x)\Big)=0.
\end{equation}
Now we calculate the term $Q\Big(S(x_{j,\varepsilon},x),H(x_{j,\varepsilon},x)\Big)$. First, we know
\begin{equation}\label{tian-4}
\begin{split}
\int_{\partial B_\theta(x_{j,\varepsilon})}& D_{\nu} S(x_{j,\varepsilon},x)
D_{x_i}H(x_{j,\varepsilon},x) \\
=&D_{x_i} H \big(x_{j,\varepsilon},x_{j,\varepsilon} \big) \int_{\partial B_\theta(x_{j,\varepsilon})}D_{\nu} S(x_{j,\varepsilon},x) +O \big(\theta \big)\int_{\partial B_\theta(x_{j,\varepsilon})} \big|D_{\nu} S(x_{j,\varepsilon},x) \big|\\
=&-\frac{1}{2}\frac{\partial R(x_{j,\varepsilon})} {\partial x_i}+O \big(\theta \big),
\end{split}
\end{equation}
\begin{equation}\label{tian-5}
\begin{split}
\int_{\partial B_\theta(x_{j,\varepsilon})}&D_{\nu} H(x_{j,\varepsilon},x)
D_{x_i} S(x_{j,\varepsilon},x) \\
=&\sum^N_{l=1} D_{x_l} H \big(x_{j,\varepsilon},x_{j,\varepsilon} \big)\int_{\partial B_\theta(x_{j,\varepsilon})}D_{x_i} S(x_{j,\varepsilon},x) \nu_l+O \big(\theta \big)\int_{\partial B_\theta(x_{j,\varepsilon})} \big|D_{x_i} S(x_{j,\varepsilon},x) \big|\\
=&D_{x_i} H \big(x_{j,\varepsilon},x_{j,\varepsilon} \big) \int_{\partial B_\theta(x_{j,\varepsilon})}D_{x_i} S(x_{j,\varepsilon},x) \nu_i+O \big(\theta \big),
\end{split}
\end{equation}
and
\begin{equation}\label{tian-6}
\begin{split}
\int_{\partial B_\theta(x_{j,\varepsilon})}& \big\langle D S(x_{j,\varepsilon},x),D H(x_{j,\varepsilon},x) \big\rangle \nu_i\\
=&\sum^N_{l=1} D_{x_l} H \big(x_{j,\varepsilon},x_{j,\varepsilon} \big) \int_{\partial B_\theta(x_{j,\varepsilon})}D_{x_l} S(x_{j,\varepsilon},x) \nu_i+O \big(\theta \big)\int_{\partial B_\theta(x_{j,\varepsilon})} \big|D S(x_{j,\varepsilon},x) \big| \\
=&D_{x_i} H \big(x_{j,\varepsilon},x_{j,\varepsilon} \big) \int_{\partial B_\theta(x_{j,\varepsilon})}D_{x_i} S(x_{j,\varepsilon},x) \nu_i+O \big(\theta \big),
\end{split}
\end{equation}
which together imply
\begin{equation}\label{tian-8}
Q\Big(S(x_{j,\varepsilon},x),H(x_{j,\varepsilon},x)\Big)= \frac{1}{2}\frac{\partial R(x_{j,\varepsilon})} {\partial x_i}+O \big(\theta \big).
\end{equation}
Also since $D_{\nu} H(x_{j,\varepsilon},x)$ is bounded in $B_d(x_{j,\varepsilon})$, it holds
\begin{equation}\label{tian-9}
Q\Big(H(x_{j,\varepsilon},x),H(x_{j,\varepsilon},x)\Big)=O \big(\theta^{N-1} \big).
\end{equation}
Letting $\theta\rightarrow 0$, from \eqref{tian123}, \eqref{tian-7}, \eqref{tian-8} and \eqref{tian-9},  we get
\begin{equation*}
\begin{split}
Q\Big(G(x_{j,\varepsilon},x),G(x_{j,\varepsilon},x)\Big)=-\frac{\partial R(x_{j,\varepsilon})}{\partial x_i}.
 \end{split}
\end{equation*}
Next, for $m\neq j$,
\begin{equation}\label{tian1234}
\begin{split}
Q\Big(G(x_{j,\varepsilon},x),G(x_{m,\varepsilon},x)\Big)=&
 Q\Big(S(x_{j,\varepsilon},x),G(x_{m,\varepsilon},x)\Big)
 - Q\Big(H(x_{j,\varepsilon},x),G(x_{m,\varepsilon},x)\Big).
 \end{split}
\end{equation}
Similar to  \eqref{tian-4}--\eqref{tian-6}, we know
\begin{equation*}
\begin{split}
\int_{\partial B_\theta(x_{j,\varepsilon})}&D_{\nu} S(x_{j,\varepsilon},x)
D_{x_i} G(x_{m,\varepsilon},x) \\
=&D_{x_i} G\big(x_{m,\varepsilon},x_{j,\varepsilon}\big) \int_{\partial B_\theta(x_{j,\varepsilon})}D_{\nu} S(x_{j,\varepsilon},x) +O\big(\theta\big)\int_{\partial B_\theta(x_{j,\varepsilon})}|D_{\nu} S(x_{j,\varepsilon},x) |\\
=&-D_{x_i} G\big(x_{m,\varepsilon},x_{j,\varepsilon}\big) +O\big(\theta\big),
\end{split}
\end{equation*}
\begin{equation*}
\begin{split}
\int_{\partial B_\theta(x_{j,\varepsilon})}&D_{\nu}G(x_{m,\varepsilon},x)
D_{x_i}S(x_{j,\varepsilon},x) \\
=&\sum^N_{l=1} D_{x_l} G\big(x_{m,\varepsilon},x_{j,\varepsilon}\big) \int_{\partial B_\theta(x_{j,\varepsilon})}D_{x_i} S(x_{j,\varepsilon},x) \nu_l+O\big(\theta\big)\int_{\partial B_\theta(x_{j,\varepsilon})}\big|D_{x_i} S(x_{j,\varepsilon},x) \big| \\
=&D_{x_i}G\big(x_{m,\varepsilon},x_{j,\varepsilon}\big) \int_{\partial B_\theta(x_{j,\varepsilon})}D_{x_i} S(x_{j,\varepsilon},x) \nu_i+O\big(\theta\big),
\end{split}
\end{equation*}
and
\begin{equation*}
\begin{split}
\int_{\partial B_\theta(x_{j,\varepsilon})}&\big\langle \nabla S(x_{j,\varepsilon},x),\nabla H(x_{j,\varepsilon},x) \big\rangle \nu_i\\
=&\sum^N_{l=1}  D_{x_l} G\big(x_{m,\varepsilon},x_{j,\varepsilon}\big) \int_{\partial B_\theta(x_{j,\varepsilon})}D_{x_l} S(x_{j,\varepsilon},x) \nu_i+O\Big(\theta \int_{\partial B_\theta(x_{j,\varepsilon})}\big|D_{\nu} S(x_{j,\varepsilon},x) \big|\Big)\\
=& D_{x_i} G\big(x_{m,\varepsilon},x_{j,\varepsilon}\big) \int_{\partial B_\theta(x_{j,\varepsilon})}D_{x_i} S(x_{j,\varepsilon},x) +O\big(\theta\big),
\end{split}
\end{equation*}
which together imply
\begin{equation}\label{tian-13}
Q\Big(S(x_{j,\varepsilon},x),G(x_{m,\varepsilon},x)\Big)= D_{x_i} G\big(x_{m,\varepsilon},x_{j,\varepsilon}\big)+O\big(\theta\big).
\end{equation}
Since $D_{\nu} H(x_{j,\varepsilon},x)$ and $D_{\nu}G(x_{m,\varepsilon},x)$ are bounded in $B_d(x_{j,\varepsilon})$, it holds
\begin{equation}\label{tian-14}
Q\Big(H(x_{j,\varepsilon},x),G(x_{m,\varepsilon},x)\Big)=O\big(\theta^{N-1}\big).
\end{equation}
Letting $\theta\rightarrow 0$, from \eqref{tian1234}--\eqref{tian-14}, we know
\begin{equation}\label{la1}
Q\Big(G(x_{j,\varepsilon},x),G(x_{m,\varepsilon},x)\Big)= D_{x_i} G\big(x_{m,\varepsilon},x_{j,\varepsilon}\big),~\mbox{for}~m\neq j.
\end{equation}
By the symmetry of $Q(u,v)$, \eqref{la1} imply
$$
Q\Big(G(x_{m,\varepsilon},x),G(x_{j,\varepsilon},x)\Big)= D_{x_i} G\big(x_{m,\varepsilon},x_{j,\varepsilon}\big), ~\mbox{for}~m\neq j.$$

Finally, since $D_{\nu}G(x_{l,\varepsilon},x)$ and $D_{\nu}G(x_{m,\varepsilon},x)$ are bounded in $B_d(x_{j,\varepsilon})$ for $l,m\neq j$, it holds that  $
Q\Big(G(x_{l,\varepsilon},x),G(x_{m,\varepsilon},x)\Big)=O\big(\theta^{N-1}\big)$.
So letting $\theta\rightarrow 0$, we know
\begin{equation*}
Q\Big(G(x_{l,\varepsilon},x),G(x_{m,\varepsilon},x)\Big)=0,~\mbox{for}~l,m\neq j.
\end{equation*}
\end{proof}

\begin{proof} [\underline{\textbf{Proof of \eqref{bb1-1}}}]
By the bilinearity of $P(u,v)$, we have
\begin{equation}\label{ddtian21}
\begin{split}
P_1\Big(G(x^{(1)}_{j,\varepsilon},x),\partial_hG(x^{(1)}_{j,\varepsilon},x)\Big)=&
 P_1\Big(S(x^{(1)}_{j,\varepsilon},x),\partial_hS(x^{(1)}_{j,\varepsilon},x)\Big)
 - P_1\Big(S(x^{(1)}_{j,\varepsilon},x),\partial_h H(x^{(1)}_{j,\varepsilon},x)\Big)\\&
 - P_1\Big(H(x^{(1)}_{j,\varepsilon},x),\partial_h S(x^{(1)}_{j,\varepsilon},x)\Big)+P_1\Big(H(x^{(1)}_{j,\varepsilon},x),\partial_h H(x^{(1)}_{j,\varepsilon},x)\Big).
 \end{split}
\end{equation}
For $P_1\Big(S(x^{(1)}_{j,\varepsilon},x),\partial_hS(x^{(1)}_{j,\varepsilon},x)\Big)$, the oddness of the integrands yields
\begin{equation}\label{ddtian22}
 P_1\Big(S(x^{(1)}_{j,\varepsilon},x),\partial_hS(x^{(1)}_{j,\varepsilon},x)\Big)=0.
\end{equation}
Now we calculate $P_1\Big(S(x^{(1)}_{j,\varepsilon},x),\partial_h H(x^{(1)}_{j,\varepsilon},x)\Big)$.  Since $\partial_hD_{\nu} H(x^{(1)}_{j,\varepsilon},x)$   is bounded in $B_d(x^{(1)}_{j,\varepsilon})$, we know
\begin{equation}\label{ddtian23}
\begin{split}
\theta\int_{\partial B_\theta(x^{(1)}_{j,\varepsilon})}& \big\langle D S(x^{(1)}_{j,\varepsilon},x),\nu\big\rangle \big\langle
\partial_h D H(x^{(1)}_{j,\varepsilon},x),\nu \big\rangle=O\Big(\theta\int_{\partial B_\theta(x^{(1)}_{j,\varepsilon})}|x-x^{(1)}_{j,\varepsilon}|^{-(N-1)}\Big)=O(\theta),
\end{split}
\end{equation}
\begin{equation}\label{ddtian24}
\begin{split}
\theta\int_{\partial B_\theta(x^{(1)}_{j,\varepsilon})}& \big\langle D S(x^{(1)}_{j,\varepsilon},x),
\partial_h D H(x^{(1)}_{j,\varepsilon},x) \big\rangle=O\Big(\theta\int_{\partial B_\theta(x^{(1)}_{j,\varepsilon})}|x-x^{(1)}_{j,\varepsilon}|^{-(N-1)}\Big)=O(\theta),
\end{split}
\end{equation}
and
\begin{equation}\label{ddtian25}
\begin{split}
 \int_{\partial B_\theta(x^{(1)}_{j,\varepsilon})}& \big\langle \partial_h D H(x^{(1)}_{j,\varepsilon},x),\nu\big\rangle   S(x^{(1)}_{j,\varepsilon},x)  =O\Big( \int_{\partial B_\theta(x^{(1)}_{j,\varepsilon})}|x-x^{(1)}_{j,\varepsilon}|^{-(N-2)}\Big)=O(\theta).
\end{split}
\end{equation}
Next, we obtain
\begin{equation}\label{ddtian26}
\begin{split}
 \int_{\partial B_\theta(x^{(1)}_{j,\varepsilon})}& \big\langle D S(x^{(1)}_{j,\varepsilon},x),\nu\big\rangle   \partial_h H(x^{(1)}_{j,\varepsilon},x)\\=&
   \partial_h  H(x^{(1)}_{j,\varepsilon},x^{(1)}_{j,\varepsilon})\int_{\partial B_\theta(x^{(1)}_{j,\varepsilon})} \big\langle D S(x^{(1)}_{j,\varepsilon},x),\nu\big\rangle +O(\theta)= -\frac{1}{2}  \partial_hR(x^{(1)}_{j,\varepsilon}) +O(\theta).
\end{split}
\end{equation}
Then from \eqref{ddtian23}--\eqref{ddtian26}, we get
\begin{equation}\label{ddtian27}
P_1\Big(S(x^{(1)}_{j,\varepsilon},x),H(x^{(1)}_{j,\varepsilon},x)\Big)=\frac{N-2}{8}\partial_hR(x^{(1)}_{j,\varepsilon})
+O(\theta).
\end{equation}
Next, we calculate $P_1\Big(H(x^{(1)}_{j,\varepsilon},x),\partial_h S(x^{(1)}_{j,\varepsilon},x)\Big)$.
First, let  $y=x-x^{(1)}_{j,\varepsilon}$, then we get
\begin{equation}\label{8-17-1}
\partial_{h} S\big(x^{(1)}_{j,\varepsilon},x\big)=-\frac{y_h}{\omega_N|y|^{N}}, ~
\big\langle D \partial_{h} S\big(x^{(1)}_{j,\varepsilon},x\big),\nu \big\rangle =\frac{(1-N)y_h}{\omega_N|y|^{N+1}},
\end{equation}
and
\begin{equation}\label{8-17-2}
D_{x_l}\partial_{h} S\big(x^{(1)}_{j,\varepsilon},x\big) =\frac{\delta_{hl}}{\omega_N|y|^{N}}-
\frac{N y_hy_l}
{\omega_N|y|^{N+2}}.
\end{equation}
Then we know
\begin{equation*}
\begin{split}
\theta\int_{\partial B_\theta(x^{(1)}_{j,\varepsilon})}& \big\langle \partial_h D S(x^{(1)}_{j,\varepsilon},x),\nu\big\rangle \big\langle
 D H(x^{(1)}_{j,\varepsilon},x),\nu \big\rangle \\
=&\theta D_{h} H\big(x^{(1)}_{j,\varepsilon},x^{(1)}_{j,\varepsilon}\big) \int_{\partial B_\theta(0)}\frac{(1-N)y_h^2}{\omega_N|y|^{N+2}} +O\big(\theta\big)=\frac{1-N}{2N} \partial _h R\big(x^{(1)}_{j,\varepsilon}\big)+O\big(\theta\big),
\end{split}
\end{equation*}
\begin{equation*}
\begin{split}
\theta\int_{\partial B_\theta(x^{(1)}_{j,\varepsilon})}& \big\langle \partial_h  D S(x^{(1)}_{j,\varepsilon},x),
D H(x^{(1)}_{j,\varepsilon},x) \big\rangle \\
=& \theta D_{h}H\big(x^{(1)}_{j,\varepsilon},x^{(1)}_{j,\varepsilon}\big)\int_{\partial B_\theta(0)}
\big(\frac{1}{\omega_N|y|^{N}}-
\frac{N y^2_h}
{\omega_N|y|^{N+2}}\big)
+O\big(\theta\big)=O\big(\theta\big),
\end{split}
\end{equation*}
\begin{equation*}
\begin{split}
 \int_{\partial B_\theta(x^{(1)}_{j,\varepsilon})}& \big\langle D H(x^{(1)}_{j,\varepsilon},x),\nu\big\rangle   \partial_h   S(x^{(1)}_{j,\varepsilon},x)  \\
=& -\partial _h H\big(x^{(1)}_{j,\varepsilon},x^{(1)}_{j,\varepsilon}\big)
 \int_{\partial B_\theta(0)} \frac{y^2_h}{\omega_N|y|^{N+1}}+O\big(\theta\big)
 =-\frac{1}{2N} \partial _h R\big(x^{(1)}_{j,\varepsilon}\big)+O\big(\theta\big),
\end{split}
\end{equation*}
and
\begin{equation*}
\begin{split}
 \int_{\partial B_\theta(x^{(1)}_{j,\varepsilon})}& \big\langle \partial_h D S(x^{(1)}_{j,\varepsilon},x),\nu\big\rangle   H(x^{(1)}_{j,\varepsilon},x)\\=&
   \partial_h  H(x^{(1)}_{j,\varepsilon},x_{j,\varepsilon})\int_{\partial B_\theta(x^{(1)}_{j,\varepsilon})}\frac{(1-N)y^2_h}{\omega_N|y|^{N+1}} +O(\theta)= \frac{1-N}{2N}  \partial_hR(x^{(1)}_{j,\varepsilon})+O(\theta),
\end{split}
\end{equation*}
which together imply
\begin{equation}\label{ddll-2}
P_1\Big(H\big(x^{(1)}_{j,\varepsilon},x\big),\partial_h S\big(x^{(1)}_{j,\varepsilon},x\big)\Big)
=\big(\frac{N-2}{8}+\frac{1-N}{2N}\big)\partial _h R\big(x^{(1)}_{j,\varepsilon}\big)+O\big(\theta\big).
\end{equation}
Also since $H(x^{(1)}_{j,\varepsilon},x)$ and $D_{\nu} H(x^{(1)}_{j,\varepsilon},x)$ are bounded in $B_d(x^{(1)}_{j,\varepsilon})$, it holds that
\begin{equation}\label{ddtian28}
P_1\Big(H(x^{(1)}_{j,\varepsilon},x),\partial_h H(x^{(1)}_{j,\varepsilon},x)\Big)=O(\theta^{N-1}).
\end{equation}
Letting $\theta\rightarrow 0$, from \eqref{ddtian21}, \eqref{ddtian22}, \eqref{ddtian27},
\eqref{ddll-2} and \eqref{ddtian28}, we get
\begin{equation*}
\begin{split}
P_1\Big(G(x^{(1)}_{j,\varepsilon},x),\partial_h G(x^{(1)}_{j,\varepsilon},x)\Big)=\Big(\frac{N-2}{4}+\frac{1-N}{2N}\Big)\partial _h R\big(x^{(1)}_{j,\varepsilon}\big).
 \end{split}
\end{equation*}
Next, for $m\neq j$,
\begin{equation}\label{ddtian29}
\begin{split}
P_1\Big(G(x^{(1)}_{j,\varepsilon},x),\partial_hG(x^{(1)}_{m,\varepsilon},x)\Big)=&
P_1\Big(S(x^{(1)}_{j,\varepsilon},x),\partial_hG(x^{(1)}_{m,\varepsilon},x)\Big)
 -P_1\Big(H(x^{(1)}_{j,\varepsilon},x),\partial_hG(x^{(1)}_{m,\varepsilon},x)\Big).
 \end{split}
\end{equation}
Since $\partial_hD_{\nu} G(x^{(1)}_{m,\varepsilon},x)$   is bounded in $B_d(x^{(1)}_{j,\varepsilon})$, we know
\begin{equation}\label{ddtian30}
\begin{split}
\theta\int_{\partial B_\theta(x^{(1)}_{j,\varepsilon})}& \big\langle D S(x^{(1)}_{j,\varepsilon},x),\nu \big\rangle \big\langle
\partial_hD G(x^{(1)}_{m,\varepsilon},x),\nu  \big\rangle=O\Big(\theta\int_{\partial B_\theta(0)}\frac{1}{|y|^{N-1}}\Big)=O \big(\theta \big),
\end{split}
\end{equation}
\begin{equation}\label{ddtian31}
\begin{split}
\theta\int_{\partial B_\theta(x^{(1)}_{j,\varepsilon})}&  \big\langle D S(x^{(1)}_{j,\varepsilon},x),
\partial_hD G(x^{(1)}_{m,\varepsilon},x)  \big\rangle=O\Big(\theta\int_{\partial B_\theta(0)}\frac{1}{|y|^{N-1}}\Big)=O \big(\theta \big),
\end{split}
\end{equation}
\begin{equation}\label{ddtian32}
\begin{split}
 \int_{\partial B_\theta(x^{(1)}_{j,\varepsilon})}&  \big\langle \partial_hD G(x^{(1)}_{m,\varepsilon},x),\nu \big\rangle   S(x^{(1)}_{j,\varepsilon},x)  =O\Big( \int_{\partial B_\theta(0)}\frac{1}{|y|^{N-2}}\Big)=O\big(\theta\big),
\end{split}
\end{equation}
and
\begin{equation}\label{ddtian33}
\begin{split}
\int_{\partial B_\theta(x^{(1)}_{j,\varepsilon})}& \big\langle D S(x^{(1)}_{j,\varepsilon},x),\nu \big\rangle   \partial_hG(x^{(1)}_{m,\varepsilon},x)  \\
=&  \partial_hG \big(x^{(1)}_{m,\varepsilon},x^{(1)}_{j,\varepsilon} \big) \int_{\partial B_\theta(x^{(1)}_{j,\varepsilon})}D_{\nu} S(x^{(1)}_{j,\varepsilon},x) +O \big(\theta \big)=-\partial_hG \big(x^{(1)}_{m,\varepsilon},x^{(1)}_{j,\varepsilon} \big) +O \big(\theta \big).
\end{split}
\end{equation}
From \eqref{ddtian30}--\eqref{ddtian33}, we get
\begin{equation}\label{ddtian34}
P_1\Big(S(x^{(1)}_{j,\varepsilon},x),G(x^{(1)}_{m,\varepsilon},x)\Big)=\frac{N-2}{4}\partial_hG \big(x^{(1)}_{m,\varepsilon},x^{(1)}_{j,\varepsilon} \big)+O \big(\theta \big).
\end{equation}
Also since $H(x^{(1)}_{j,\varepsilon},x)$, $D_{\nu} H(x^{(1)}_{j,\varepsilon},x)$, $\partial_hG(x^{(1)}_{m,\varepsilon},x)$ and $\partial_hD_{\nu}G(x^{(1)}_{m,\varepsilon},x)$ are bounded in $B_d(x^{(1)}_{j,\varepsilon})$, it holds that
\begin{equation}\label{ddtian35}
P_1\Big(H(x^{(1)}_{j,\varepsilon},x),G(x^{(1)}_{m,\varepsilon},x)\Big)=O \big(\theta^{N-1} \big).
\end{equation}
Letting $\theta\rightarrow 0$, from \eqref{ddtian29}, \eqref{ddtian34} and \eqref{ddtian35}, we know
\begin{equation*}
P_1\Big(G(x^{(1)}_{j,\varepsilon},x),\partial_hG(x^{(1)}_{m,\varepsilon},x)\Big)= \frac{N-2}{4}\partial_hG \big(x^{(1)}_{m,\varepsilon},x^{(1)}_{j,\varepsilon} \big),~\mbox{for}~m\neq j.
\end{equation*}
Next, we calculate  the term $
P_1\Big(G(x^{(1)}_{m,\varepsilon},x),\partial_hG(x^{(1)}_{j,\varepsilon},x)\Big)$. Similar to the estimate of \eqref{ddll-2}, we find
\begin{equation*}
P_1\Big(G(x^{(1)}_{m,\varepsilon},x),\partial_hG(x^{(1)}_{j,\varepsilon},x)\Big)=
\big(\frac{N-2}{4}+\frac{1-N}{N}\big)\partial _hG \big(x^{(1)}_{j,\varepsilon},x^{(1)}_{m,\varepsilon} \big),~\mbox{for}~m\neq j.
\end{equation*}

Finally, for $l,m\neq j$, since $G(x^{(1)}_{m,\varepsilon},x)$, $D_{\nu}G(x^{(1)}_{m,\varepsilon},x)$, $\partial_hG(x^{(1)}_{l,\varepsilon},x)$ and $\partial_hD_{\nu}G(x^{(1)}_{l,\varepsilon},x)$ are bounded in $B_d(x^{(1)}_{j,\varepsilon})$, we conclude  $P_1\Big(G(x^{(1)}_{l,\varepsilon},x),\partial_hG(x^{(1)}_{m,\varepsilon},x)\Big)=O \big(\theta^{N-1} \big)$.
So letting $\theta\rightarrow 0$, we know
\begin{equation*}
P_1\Big(G(x^{(1)}_{l,\varepsilon},x),\partial_hG(x^{(1)}_{m,\varepsilon},x)\Big)=0,~\mbox{for}~l,m\neq j.
\end{equation*}
\end{proof}
\begin{proof} [\underline{\textbf{Proof of \eqref{luo41}}}]
By the bilinearity of $Q_1(u,v)$, we have

\begin{small}
\begin{equation}\label{tian71}
\begin{split}
Q_1\Big(G(x^{(1)}_{j,\varepsilon},x),\partial _hG(x^{(1)}_{j,\varepsilon},x)\Big)=&
Q_1\Big(S(x^{(1)}_{j,\varepsilon},x),\partial _h S(x^{(1)}_{j,\varepsilon},x)\Big)
  - Q_1\Big(H(x^{(1)}_{j,\varepsilon},x),\partial _hS
  (x^{(1)}_{j,\varepsilon},x)\Big)  \\&- Q_1\Big(S(x^{(1)}_{j,\varepsilon},x),\partial _hH(x^{(1)}_{j,\varepsilon},x)\Big)+Q_1\Big(H(x^{(1)}_{j,\varepsilon},x),\partial_h H(x^{(1)}_{j,\varepsilon},x)\Big).
 \end{split}
\end{equation}
\end{small}
Then for $Q_1\Big(S(x^{(1)}_{j,\varepsilon},x),\partial _h S(x^{(1)}_{j,\varepsilon},x)\Big)$, the integrand is odd which means
\begin{equation}\label{tian72}
Q_1\Big(S(x^{(1)}_{j,\varepsilon},x),\partial _h S(x^{(1)}_{j,\varepsilon},x)\Big)=0.
\end{equation}
Now we calculate the term $Q_1\Big(S(x^{(1)}_{j,\varepsilon},x),\partial_h H(x^{(1)}_{j,\varepsilon},x)\Big)$. First, we know
\begin{equation*}
\begin{split}
\int_{\partial B_\theta(x^{(1)}_{j,\varepsilon})}& D_{\nu} S\big(x^{(1)}_{j,\varepsilon},x\big)
D_{x_i}\partial _h H\big(x^{(1)}_{j,\varepsilon},x\big) \\
=&D_{x_i}\partial _h H\big(x^{(1)}_{j,\varepsilon},x^{(1)}_{j,\varepsilon}\big) \int_{\partial B_\theta(x^{(1)}_{j,\varepsilon})}D_{\nu} S\big(x^{(1)}_{j,\varepsilon},x\big) +O\big(\theta\big)\int_{\partial B_\theta(x^{(1)}_{j,\varepsilon})}\big|D_{\nu} S(x^{(1)}_{j,\varepsilon},x)\big|\\
=&-D_{x_i}\partial _h H\big(x^{(1)}_{j,\varepsilon},x^{(1)}_{j,\varepsilon}\big)+O\big(\theta\big),
\end{split}
\end{equation*}
\begin{equation*}
\begin{split}
\int_{\partial B_\theta(x^{(1)}_{j,\varepsilon})}&D_{\nu} \partial _hH\big(x^{(1)}_{j,\varepsilon},x\big)
D_{x_i} S\big(x^{(1)}_{j,\varepsilon},x\big) \\
=&\sum^N_{l=1} D_{x_l} \partial _hH\big(x^{(1)}_{j,\varepsilon},x^{(1)}_{j,\varepsilon}\big)\int_{\partial B_\theta(x_{j,\varepsilon})}D_{x_i} S\big(x^{(1)}_{j,\varepsilon},x\big) \nu_l+O\big(\theta\big)\int_{\partial B_\theta(x^{(1)}_{j,\varepsilon})}\big|D S(x^{(1)}_{j,\varepsilon},x)\big|\\
=&D_{x_i} \partial _hH\big(x^{(1)}_{j,\varepsilon},x^{(1)}_{j,\varepsilon}\big) \int_{\partial B_\theta(x^{(1)}_{j,\varepsilon})}D_{x_i} S\big(x^{(1)}_{j,\varepsilon},x\big) \nu_i+O\big(\theta\big),
\end{split}
\end{equation*}
and
\begin{equation*}
\begin{split}
\int_{\partial B_\theta(x^{(1)}_{j,\varepsilon})}&\big\langle D S(x^{(1)}_{j,\varepsilon},x),D \partial _hH\big(x^{(1)}_{j,\varepsilon},x\big) \big\rangle \nu_i\\
=&\sum^N_{l=1}D_{x_l} \partial _h H\big(x^{(1)}_{j,\varepsilon},x^{(1)}_{j,\varepsilon}\big) \int_{\partial B_\theta(x^{(1)}_{j,\varepsilon})}D_{x_l} S\big(x^{(1)}_{j,\varepsilon},x\big) \nu_i+O\big(\theta\big)\int_{\partial B_\theta(x^{(1)}_{j,\varepsilon})}\big|D S\big(x^{(1)}_{j,\varepsilon},x\big)\big|\\
=&D_{x_i}\partial _h H\big(x^{(1)}_{j,\varepsilon},x^{(1)}_{j,\varepsilon}\big) \int_{\partial B_\theta(x^{(1)}_{j,\varepsilon})}D_{x_i} S\big(x^{(1)}_{j,\varepsilon},x\big) \nu_i+O\big(\theta\big),
\end{split}
\end{equation*}
which together imply
\begin{equation}\label{ll-2}
Q_1\Big(S\big(x^{(1)}_{j,\varepsilon},x\big),\partial_hH\big(x^{(1)}_{j,\varepsilon},x\big)\Big)
=D_{x_i}\partial _h H\big(x^{(1)}_{j,\varepsilon},x^{(1)}_{j,\varepsilon}\big)+O\big(\theta\big).
\end{equation}
Next  we calculate the term $Q_1\Big(\partial_h S(x^{(1)}_{j,\varepsilon},x), H(x^{(1)}_{j,\varepsilon},x)\Big)$. Using \eqref{8-17-1} and \eqref{8-17-2}, we find
\begin{equation}\label{tian76}
\begin{split}
\int_{\partial B_\theta\big(x^{(1)}_{j,\varepsilon}\big)}& D_{\nu} \partial _h  S\big(x^{(1)}_{j,\varepsilon},x\big)
D_{x_i}H\big(x^{(1)}_{j,\varepsilon},x\big) \\
=&\int_{\partial B_\theta(x^{(1)}_{j,\varepsilon})}D_{\nu} \partial _h  S\big(x^{(1)}_{j,\varepsilon},x\big)
\big\langle D D_{x_i}H\big(x^{(1)}_{j,\varepsilon},x^{(1)}_{j,\varepsilon}\big), x-x^{(1)}_{j,\varepsilon}\big\rangle+O\big(\theta\big)\\=&
\sum^N_{l=1}(1-N)\omega_N^{-1}D^2_{x_ix_l}H\big(x^{(1)}_{j,\varepsilon},x^{(1)}_{j,\varepsilon}\big)
\int_{|y|=\theta} \frac{y_hy_l}{|y|^{N+1}}+O\big(\theta\big)  \\
=&\frac{1-N}{N}D^2_{x_ix_h} H\big(x^{(1)}_{j,\varepsilon},x^{(1)}_{j,\varepsilon}\big)+O\big(\theta\big),
\end{split}
\end{equation}
\begin{equation}\label{tian77}
\begin{split}
\int_{\partial B_\theta(x^{(1)}_{j,\varepsilon})}&D_{\nu} H\big(x^{(1)}_{j,\varepsilon},x\big)
D_{x_i}\partial _h S\big(x^{(1)}_{j,\varepsilon},x\big) \\
=&\int_{\partial B_\theta(x^{(1)}_{j,\varepsilon})}D_{x_i} \partial _h  S\big(x^{(1)}_{j,\varepsilon},x\big)
\big\langle D^2H\big(x^{(1)}_{j,\varepsilon},x^{(1)}_{j,\varepsilon}\big)\big(x-x^{(1)}_{j,\varepsilon}\big),
\nu\big\rangle+O\big(\theta\big)\\
=&\sum^N_{l=1}\sum^N_{t=1} \omega_N^{-1}D^2_{x_tx_l}H\big(x^{(1)}_{j,\varepsilon},x^{(1)}_{j,\varepsilon}\big)\int_{|y|=\theta} \frac{y_ty_l}{|y|}
\big(\frac{\delta_{hi}}{|y|^{N}}-
\frac{N y_hy_i}{|y|^{N+2}}\big)+O\big(\theta\big)\\
=&\begin{cases}
-\frac{2}{N}D^2_{x_ix_h} H\big(x^{(1)}_{j,\varepsilon},x^{(1)}_{j,\varepsilon}\big)+O\big(\theta\big), ~& \mbox{for}~i\neq h,\\[1mm]
O\big(\theta\big), ~& \mbox{for}~i= h,
\end{cases}
\end{split}
\end{equation}
and
\begin{equation}\label{tian78}
\begin{split}
\int_{\partial B_\theta(x^{(1)}_{j,\varepsilon})}&\big\langle D\partial _h S\big(x^{(1)}_{j,\varepsilon},x\big),D H\big(x^{(1)}_{j,\varepsilon},x\big) \big\rangle \nu_i\\
=&\int_{\partial B_\theta(x^{(1)}_{j,\varepsilon})}
\langle D^2H\big(x^{(1)}_{j,\varepsilon},x^{(1)}_{j,\varepsilon}\big)\big(x-x^{(1)}_{j,\varepsilon}\big),D \partial _h  S\big(x^{(1)}_{j,\varepsilon},x\big)\big\rangle \nu_i+O\big(\theta\big)\\
=&\sum^N_{l=1}\sum^N_{t=1} \omega_N^{-1}D^2_{x_tx_l}H\big(x^{(1)}_{j,\varepsilon},x^{(1)}_{j,\varepsilon}\big)\int_{|y|=\theta}  y_t
\big(\frac{\delta_{hl}}{|y|^{N}}-
\frac{N y_hy_l}{|y|^{N+2}}\big)\frac{y_i}{|y|}+O\big(\theta\big)\\
=&\begin{cases}
-\frac{1}{N}D^2_{x_ix_h} H\big(x^{(1)}_{j,\varepsilon},x^{(1)}_{j,\varepsilon}\big)+O\big(\theta\big), ~& \mbox{for}~i\neq h,\\[1mm]
\frac{1}{N}D^2_{x_ix_h} H\big(x^{(1)}_{j,\varepsilon},x^{(1)}_{j,\varepsilon}\big)+O\big(\theta\big), ~& \mbox{for}~i= h.
\end{cases}
\end{split}
\end{equation}
From \eqref{tian76}--\eqref{tian78}, we get
\begin{equation}\label{ll-1}
Q_1\Big(S\big(x^{(1)}_{j,\varepsilon},x\big),\partial_hH\big(x^{(1)}_{j,\varepsilon},x\big)\Big)= D^2_{x_ix_h} H\big(x^{(1)}_{j,\varepsilon},x^{(1)}_{j,\varepsilon}\big)+O\big(\theta\big).
\end{equation}
Here the last two equalities hold by the fact $\Delta G\big(x,x^{(1)}_{j,\varepsilon}\big)=0$ for $x\in \Omega\backslash B_{\theta}(x^{(1)}_{j,\varepsilon})$.
Also since $H\big(x^{(1)}_{j,\varepsilon},x\big)$ and $D_{\nu} H\big(x^{(1)}_{j,\varepsilon},x\big)$ are bounded in $B_d\big(x^{(1)}_{j,\varepsilon}\big)$, it holds that
\begin{equation}\label{tian79}
Q_1\Big(H\big(x^{(1)}_{j,\varepsilon},x\big),\partial _h H\big(x^{(1)}_{j,\varepsilon},x\big)\Big)=O\big(\theta^{N-1}\big).
\end{equation}
Letting $\theta\rightarrow 0$, from \eqref{tian71}--\eqref{ll-2}, \eqref{ll-1} and \eqref{tian79}, we get
\begin{equation*}
\begin{split}
Q_1\Big(G\big(x^{(1)}_{j,\varepsilon},x\big), \partial _hG\big(x^{(1)}_{j,\varepsilon},x\big)\Big)=-D_{x_i}\partial _h H\big(x^{(1)}_{j,\varepsilon},x^{(1)}_{j,\varepsilon}\big)-D^2_{x_ix_h} H\big(x^{(1)}_{j,\varepsilon},x^{(1)}_{j,\varepsilon}\big)=-\frac{\partial^2 R(x^{(1)}_{j,\varepsilon})}{\partial x_i\partial x_h}.
 \end{split}
\end{equation*}
Next, for $m\neq j$,
\begin{equation}\label{tian81}
\begin{split}
Q_1 &\Big(G\big(x^{(1)}_{j,\varepsilon},x\big),\partial _h G\big(x^{(1)}_{m,\varepsilon},x\big)\Big) \\=&
 Q_1\Big(S\big(x^{(1)}_{j,\varepsilon},x\big),\partial _hG\big(x^{(1)}_{m,\varepsilon},x\big)\Big)
 - Q_1\Big(H\big(x^{(1)}_{j,\varepsilon},x\big),\partial _hG\big(x^{(1)}_{m,\varepsilon},x\big)\Big).
 \end{split}
\end{equation}
Similar to the estimates of \eqref{ll-2}, we know
\begin{equation}\label{tian82}
Q_1\Big(S\big(x^{(1)}_{j,\varepsilon},x\big),\partial _hG\big(x^{(1)}_{m,\varepsilon},x\big)\Big)=D_{x_i}\partial _h G\big(x^{(1)}_{m,\varepsilon},x^{(1)}_{j,\varepsilon}\big)+O\big(\theta\big).
\end{equation}
Obviously,
\begin{equation}\label{tian83}
\begin{split}
Q_1\Big(H\big(x^{(1)}_{j,\varepsilon},x\big),\partial _hG\big(x^{(1)}_{m,\varepsilon},x\big)\Big)=O\big(\theta^{N-1}\big).
 \end{split}
\end{equation}
Letting $\theta\rightarrow 0$, from \eqref{tian81}--\eqref{tian83},  we obtain
\begin{equation*}
\begin{split}
Q_1\Big(G\big(x^{(1)}_{j,\varepsilon},x\big), \partial _hG\big(x^{(1)}_{m,\varepsilon},x\big)\Big)= D_{x_i}\partial _h G\big(x^{(1)}_{m,\varepsilon},x^{(1)}_{j,\varepsilon}\big),~\mbox{for}~m\neq j.
 \end{split}
\end{equation*}
Also, for $m\neq j$,
\begin{equation}\label{tian84}
\begin{split}
Q_1 &\Big(G\big(x^{(1)}_{m,\varepsilon},x\big),\partial _h G\big(x^{(1)}_{j,\varepsilon},x\big)\Big) \\=&
 Q_1\Big(S\big(x^{(1)}_{m,\varepsilon},x\big),\partial _hG\big(x^{(1)}_{j,\varepsilon},x\big)\Big)
 - Q_1\Big(H\big(x^{(1)}_{m,\varepsilon},x\big),\partial _hG\big(x^{(1)}_{j,\varepsilon},x\big)\Big).
\end{split}\end{equation}
Similar to the estimates of \eqref{ll-1}, we know
\begin{equation}\label{tian85}
Q_1\Big(S(x^{(1)}_{m,\varepsilon},x),\partial _hG(x^{(1)}_{j,\varepsilon},x)\Big)=
D^2_{x_ix_h} G\big(x^{(1)}_{m,\varepsilon},x^{(1)}_{j,\varepsilon}\big)+O\big(\theta\big).
\end{equation}
Obviously,
\begin{equation}\label{tian86}
\begin{split}
Q_1\Big(H\big(x^{(1)}_{m,\varepsilon},x\big),\partial _hG\big(x^{(1)}_{j,\varepsilon},x\big)\Big)=O\big(\theta^{N-1}\big).
 \end{split}
\end{equation}
Letting $\theta\rightarrow 0$, from \eqref{tian84}--\eqref{tian86},  we obtain
\begin{equation*}
\begin{split}
Q_1\Big(G\big(x^{(1)}_{m,\varepsilon},x\big), \partial _hG\big(x^{(1)}_{j,\varepsilon},x\big)\Big)=
D^2_{x_ix_h} G\big(x^{(1)}_{m,\varepsilon},x^{(1)}_{j,\varepsilon}\big),~\mbox{for}~m\neq j.
 \end{split}
\end{equation*}
Finally,  since $G\big(x^{(1)}_{l,\varepsilon},x\big)$ and $D_{\nu} H\big(x^{(1)}_{l,\varepsilon},x\big)$ are bounded in $B_d\big(x^{(1)}_{j,\varepsilon}\big)$ for $l\neq j$, it holds that
\begin{equation*}
Q_1\Big(G\big(x^{(1)}_{m,\varepsilon},x\big),\partial _h G\big(x^{(1)}_{l,\varepsilon},x\big)\Big)=0,~\mbox{for}~m,l\neq j.
\end{equation*}
\end{proof}

\appendix
\renewcommand{\theequation}{A.\arabic{equation}}

\setcounter{equation}{0}

\section{Some basic estimates}\label{App-A}
\setcounter{equation}{0}
In this appendix, we give estimates that have been  used in the previous sections.
\begin{Lem}
For any small fixed $d>0$ and  $j=1,2\cdots,k$, it holds
\begin{equation}\label{A--1}
PU_{x_{j,\varepsilon},\lambda_{j,\varepsilon}}(x)
=O\Big(\frac{1}{\lambda^{(N-2)/2}_{j,\varepsilon}}\Big), ~\mbox{in}~ C^1
\Big(\Omega \backslash B_d(x_{j,\varepsilon})\Big).
\end{equation}
Moreover,
if we define  $\varphi_{x_{j,\varepsilon},\lambda_{j,\varepsilon}}(x)=
U_{x_{j,\varepsilon},\lambda_{j,\varepsilon}}(x)-PU_{x_{j,\varepsilon},\lambda_{j,\varepsilon}}(x)$, then it holds
\begin{equation}\label{A---1}
\varphi_{x_{j,\varepsilon},\lambda_{j,\varepsilon}}(x)=O\Big(\frac{1}{\lambda^{(N-2)/2}_{j,\varepsilon}}\Big), ~\mbox{in}~ C^1
\big(\Omega \big).
\end{equation}
\end{Lem}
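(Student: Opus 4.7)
The plan is to exploit the fact that $\varphi_{x_{j,\varepsilon},\lambda_{j,\varepsilon}}$ solves the homogeneous Dirichlet problem
\begin{equation*}
\Delta \varphi_{x_{j,\varepsilon},\lambda_{j,\varepsilon}}=0 \text{ in } \Omega, \qquad \varphi_{x_{j,\varepsilon},\lambda_{j,\varepsilon}}=U_{x_{j,\varepsilon},\lambda_{j,\varepsilon}} \text{ on } \partial\Omega,
\end{equation*}
so that once one controls $U_{x_{j,\varepsilon},\lambda_{j,\varepsilon}}$ on $\partial\Omega$ in $C^{1}$, the full estimate for $\varphi_{x_{j,\varepsilon},\lambda_{j,\varepsilon}}$ comes from elliptic regularity, and the estimate for $PU=U-\varphi$ outside $B_d(x_{j,\varepsilon})$ then follows from the pointwise decay of $U_{x_{j,\varepsilon},\lambda_{j,\varepsilon}}$ away from its center.

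First I would observe that $x_{j,\varepsilon}\to a_j\in\Omega$, hence $\mathrm{dist}(x_{j,\varepsilon},\partial\Omega)\ge c_0>0$ for all small $\varepsilon$. Then for every $y\in\partial\Omega$ one has $|y-x_{j,\varepsilon}|\ge c_0$, and plugging this into the explicit formula
\begin{equation*}
U_{x_{j,\varepsilon},\lambda_{j,\varepsilon}}(y)=\frac{C_N \lambda_{j,\varepsilon}^{(N-2)/2}}{\bigl(1+\lambda_{j,\varepsilon}^{2}|y-x_{j,\varepsilon}|^{2}\bigr)^{(N-2)/2}}
\end{equation*}
gives $U_{x_{j,\varepsilon},\lambda_{j,\varepsilon}}(y)=O\bigl(\lambda_{j,\varepsilon}^{-(N-2)/2}\bigr)$ uniformly on $\partial\Omega$. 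A direct differentiation shows the same bound for $\nabla U_{x_{j,\varepsilon},\lambda_{j,\varepsilon}}$ (and indeed for any fixed tangential derivative) on $\partial\Omega$; in fact at distance $\ge c_0$ one has the sharper bound $O(\lambda_{j,\varepsilon}^{-(N+2)/2})$ for the gradient, but the weaker bound is all that we need.

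Next I would invoke standard boundary Schauder estimates for the Dirichlet Laplacian on the smooth bounded domain $\Omega$: since $\varphi_{x_{j,\varepsilon},\lambda_{j,\varepsilon}}$ is harmonic with boundary data whose $C^{1,\alpha}(\partial\Omega)$-norm is $O(\lambda_{j,\varepsilon}^{-(N-2)/2})$, one gets
\begin{equation*}
\|\varphi_{x_{j,\varepsilon},\lambda_{j,\varepsilon}}\|_{C^{1}(\overline\Omega)}\le C\,\|U_{x_{j,\varepsilon},\lambda_{j,\varepsilon}}\|_{C^{1,\alpha}(\partial\Omega)}=O\Bigl(\lambda_{j,\varepsilon}^{-(N-2)/2}\Bigr),
\end{equation*}
which is exactly \eqref{A---1}. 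Alternatively, the $L^\infty$ bound comes from the maximum principle and the gradient bound from interior/boundary gradient estimates for harmonic functions using the Poisson kernel representation.

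Finally, to deduce \eqref{A--1}, I would write $PU_{x_{j,\varepsilon},\lambda_{j,\varepsilon}}=U_{x_{j,\varepsilon},\lambda_{j,\varepsilon}}-\varphi_{x_{j,\varepsilon},\lambda_{j,\varepsilon}}$ on $\Omega\setminus B_d(x_{j,\varepsilon})$. The bound on $\varphi$ is already established in $C^{1}(\overline\Omega)$; for $U$, the same elementary computation used on $\partial\Omega$ applies verbatim to every $x$ with $|x-x_{j,\varepsilon}|\ge d$, yielding $U_{x_{j,\varepsilon},\lambda_{j,\varepsilon}}(x)=O(\lambda_{j,\varepsilon}^{-(N-2)/2})$ and $|\nabla U_{x_{j,\varepsilon},\lambda_{j,\varepsilon}}(x)|=O(\lambda_{j,\varepsilon}^{-(N-2)/2})$ there. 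Adding the two estimates gives \eqref{A--1}. There is no real obstacle here: the only delicate point is to make sure that the constant in the Schauder estimate is independent of $\varepsilon$, which is guaranteed because $\Omega$ is fixed and smooth, and the $C^{1,\alpha}$-norm of the boundary data is controlled uniformly in $\varepsilon$ through the uniform lower bound $\mathrm{dist}(x_{j,\varepsilon},\partial\Omega)\ge c_0$.
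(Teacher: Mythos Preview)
Your proof is correct and is precisely the standard argument; the paper itself does not prove this lemma but simply cites \cite{Rey2}, Proposition~1, whose proof proceeds exactly along the lines you describe (harmonicity of $\varphi$, boundary control of $U$ via $\mathrm{dist}(x_{j,\varepsilon},\partial\Omega)\ge c_0$, and the maximum principle/Schauder estimates). One minor correction: your parenthetical claim that $|\nabla U_{x_{j,\varepsilon},\lambda_{j,\varepsilon}}|=O(\lambda_{j,\varepsilon}^{-(N+2)/2})$ on $\partial\Omega$ is not quite right---direct differentiation gives only $O(\lambda_{j,\varepsilon}^{-(N-2)/2})$ there---but as you note, the weaker bound is all that is needed, so this does not affect the argument.
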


\begin{proof}
See $\big[$\cite{Rey2}, Proposition 1$\big]$.
\end{proof}

\begin{Prop}\label{prop-A.2}
Let $N\geq 5$ and $u_{\varepsilon}(x)$ be a  solution of \eqref{1.1} with \eqref{4-6-1}. Then
\begin{equation}\label{lp21}
\|w_{\varepsilon}\|=
\begin{cases}
O\Big(\frac{1}{\lambda_\varepsilon^3}+
\frac{\varepsilon}{\lambda_\varepsilon^{3/2}}\Big), & \mbox{if}~N=5,\\[1.5mm]
O\Big(\frac{(\log \lambda_\varepsilon)^{2/3}}{\lambda_\varepsilon^{4}}+
  \frac{\varepsilon(\log \lambda_\varepsilon)^{2/3}}{\lambda^2_\varepsilon}\Big), & \mbox{if}~N=6,\\[1.5mm]
O\Big(\frac{1}{\lambda_\varepsilon^{(N+2)/2}}+
  \frac{\varepsilon}{\lambda^2_\varepsilon}\Big), & \mbox{if}~N>6,
\end{cases}
  \end{equation}
where $
  \lambda_\varepsilon:
  =\min\big\{\lambda_{1,\varepsilon},\cdots,\lambda_{k,\varepsilon}\big\}$.
\end{Prop}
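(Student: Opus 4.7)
The plan is to derive the equation satisfied by $w_\varepsilon$, apply a coercivity estimate for the linearized operator on $\bigcap_{j=1}^k E_{x_{j,\varepsilon},\lambda_{j,\varepsilon}}$, and then carefully bound the source term in the dual norm. Writing $v_\varepsilon:=\sum_{j=1}^k PU_{x_{j,\varepsilon},\lambda_{j,\varepsilon}}$ and using $-\Delta PU_{x_j,\lambda_j}=U_{x_j,\lambda_j}^{2^*-1}$, the function $w_\varepsilon=u_\varepsilon-v_\varepsilon$ solves
\begin{equation*}
-\Delta w_\varepsilon-\big[(2^*-1)v_\varepsilon^{2^*-2}+\varepsilon\big]w_\varepsilon=\ell_\varepsilon+N_\varepsilon(w_\varepsilon),
\end{equation*}
where
\begin{equation*}
\ell_\varepsilon=v_\varepsilon^{2^*-1}-\sum_{j=1}^k U_{x_{j,\varepsilon},\lambda_{j,\varepsilon}}^{2^*-1}+\varepsilon v_\varepsilon,\qquad N_\varepsilon(w)=(v_\varepsilon+w)^{2^*-1}-v_\varepsilon^{2^*-1}-(2^*-1)v_\varepsilon^{2^*-2}w,
\end{equation*}
so that $\|N_\varepsilon(w)\|_{H^{-1}}\le C\|w\|^{\min(2,2^*-1)}$.

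The orthogonality $w_\varepsilon\in\bigcap_j E_{x_{j,\varepsilon},\lambda_{j,\varepsilon}}$ together with a coercivity estimate of Musso--Pistoia type (cf.\ Proposition \ref{Prop-Luo2} applied to the approximate solution $v_\varepsilon$) yields a constant $\rho>0$ with $\langle Q_\varepsilon w_\varepsilon,w_\varepsilon\rangle\ge\rho\|w_\varepsilon\|^2$. Testing the equation against $w_\varepsilon$ then gives
\begin{equation*}
\|w_\varepsilon\|\le C\big(\|\ell_\varepsilon\|_{H^{-1}}+\|N_\varepsilon(w_\varepsilon)\|_{H^{-1}}\big).
\end{equation*}
Since the a priori statement $\|w_\varepsilon\|=o(1)$ is already in hand, the super-linear term $N_\varepsilon(w_\varepsilon)$ can be absorbed by a standard bootstrap, so the entire estimate \eqref{lp21} reduces to bounding $\|\ell_\varepsilon\|_{L^{2N/(N+2)}}$.

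I would split $\ell_\varepsilon$ into three pieces: (i) the single-bubble projection defect, obtained by expanding $PU_j^{2^*-1}-U_j^{2^*-1}=-(2^*-1)U_j^{2^*-2}\varphi_j+O(U_j^{2^*-3}\varphi_j^2)$ with $\varphi_j:=U_{x_{j,\varepsilon},\lambda_{j,\varepsilon}}-PU_{x_{j,\varepsilon},\lambda_{j,\varepsilon}}$; (ii) the interaction pieces arising from $(\sum_j PU_j)^{2^*-1}-\sum_j PU_j^{2^*-1}$; and (iii) the linear perturbation $\varepsilon v_\varepsilon$. Using the pointwise bound $\varphi_j=O(\lambda_{j,\varepsilon}^{-(N-2)/2})$ from \eqref{A---1} and the standard scaling of $U_{x,\lambda}$, a direct computation of $\int U^{2N/(N+2)}$ on $\Omega$ gives $\|U_{x,\lambda}\|_{L^{2N/(N+2)}}\asymp\lambda^{-3/2}$, $\lambda^{-2}(\log\lambda)^{2/3}$, $\lambda^{-2}$ in dimensions $N=5$, $N=6$, $N\ge 7$, respectively, which produces the $\varepsilon$-dependent term in each branch of \eqref{lp21}. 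For (i) and (ii), the same computations applied to $U^{2^*-2}\varphi_j$ and to the cross bubble profiles give $\|\ell_\varepsilon\|_{L^{2N/(N+2)}}=O(\lambda_\varepsilon^{-3})$ when $N=5$ (the inter-bubble term dominating the self-correction), $O(\lambda_\varepsilon^{-4}(\log\lambda_\varepsilon)^{2/3})$ when $N=6$ (the log coming from the logarithmic divergence of $\int U^{2N/(N+2)}$ at $N=6$), and $O(\lambda_\varepsilon^{-(N+2)/2})$ when $N\ge 7$, matching the three cases of \eqref{lp21}.

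The main technical obstacle is the dimension-dependent splitting of (i) versus (ii): in $N=5$ the interaction between distinct bumps (of size $\lambda^{-3}$ in the dual norm) is \emph{slower} than the single-bump projection error $\lambda^{-7/2}$ and must be isolated, while in $N=6$ one must track the borderline logarithmic integral carefully to obtain the exponent $(\log\lambda_\varepsilon)^{2/3}$ rather than a power. Handling these low-dimensional borderlines cleanly, and verifying that the super-linear remainder $N_\varepsilon(w_\varepsilon)$ genuinely absorbs back into the left-hand side in each case, is the heart of the argument; once done, \eqref{lp21} follows directly from the coercivity inequality.
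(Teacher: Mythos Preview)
Your approach is correct and is precisely the standard Lyapunov--Schmidt error estimate that underlies the result. The paper itself gives no proof: it simply cites Proposition~4 of Rey \cite{Rey2}. Your sketch reconstructs that argument (coercivity on $\bigcap_j E_{x_{j,\varepsilon},\lambda_{j,\varepsilon}}$ plus a dual-norm bound on the source $\ell_\varepsilon$), and correctly adapts it to the multi-bubble setting by including the inter-bubble interaction term~(ii), which is absent in Rey's single-peak situation but contributes at the same order as the projection defect here.
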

\begin{proof}
See Proposition 4 in \cite{Rey2}.
\end{proof}

\begin{Prop}
Let $N\geq 5$ and $u_{\varepsilon}(x)$ be  a  solution of \eqref{1.1} with \eqref{4-6-1}. Then
\begin{equation}\label{at3}
u_{\varepsilon}(x)=
O\Big(\frac{1}{\lambda_\varepsilon^{(N-2)/2}}\Big), \mbox{in}~~C^1\Big(\Omega\backslash\bigcup^k_{j=1}B_{2d}(x_{j,\varepsilon})\Big).
\end{equation}
\end{Prop}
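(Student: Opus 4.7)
The plan is to combine the bubble decomposition $u_\varepsilon=\sum_{j=1}^k PU_{x_{j,\varepsilon},\lambda_{j,\varepsilon}}+w_\varepsilon$ from \eqref{4-18-11} with the Green's representation
\[
u_\varepsilon(x)=\int_\Omega G(y,x)\bigl(u_\varepsilon^{(N+2)/(N-2)}(y)+\varepsilon u_\varepsilon(y)\bigr)\,dy.
\]
Since \eqref{A--1} already gives $\sum_j PU_{x_{j,\varepsilon},\lambda_{j,\varepsilon}}=O(\lambda_\varepsilon^{-(N-2)/2})$ in $C^1(\Omega\setminus\bigcup_j B_d(x_{j,\varepsilon}))$, the proof of \eqref{at3} is equivalent to obtaining the same bound for $w_\varepsilon$ (equivalently for $u_\varepsilon$ directly from the representation).

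For $x\in\Omega\setminus\bigcup_j B_{2d}(x_{j,\varepsilon})$, I would split the Green's integral into the pieces over $B_d(x_{j,\varepsilon})$ and over $\Omega\setminus\bigcup_j B_d(x_{j,\varepsilon})$. On each $B_d(x_{j,\varepsilon})$ the inequality $|y-x|\ge d$ makes $G(y,x)$ and $\nabla_x G(y,x)$ uniformly bounded, so this piece is controlled by $\sum_j\int_{B_d(x_{j,\varepsilon})}\bigl(u_\varepsilon^{(N+2)/(N-2)}+\varepsilon u_\varepsilon\bigr)$. Using $(a+b)^p\le C(a^p+b^p)$ on the decomposition, the explicit scaling $\int_{\mathbb R^N}U_{x_{j,\varepsilon},\lambda_{j,\varepsilon}}^{(N+2)/(N-2)}=A\,\lambda_{j,\varepsilon}^{-(N-2)/2}$, the Sobolev embedding $\|w_\varepsilon\|_{L^{(N+2)/(N-2)}}\le C\|w_\varepsilon\|$, and the bound \eqref{lp21}, this part contributes $O(\lambda_\varepsilon^{-(N-2)/2})$.

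The main obstacle is the integral over $\Omega\setminus\bigcup_j B_d(x_{j,\varepsilon})$, because a naive bound would use exactly the $L^\infty$-decay $u_\varepsilon=O(\lambda_\varepsilon^{-(N-2)/2})$ one is trying to establish, risking circularity. To break this, I would first use interior elliptic regularity (e.g. $L^\infty$ estimates via De Giorgi--Nash--Moser) applied on the enlarged region $\Omega\setminus\bigcup_j B_{d/2}(x_{j,\varepsilon})$: combining $\|w_\varepsilon\|_{H^1_0}=o(1)$ from \eqref{lp21} with \eqref{A--1}, one concludes the qualitative smallness $\|u_\varepsilon\|_{L^\infty(\Omega\setminus\bigcup_j B_{d/2}(x_{j,\varepsilon}))}\to 0$. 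Then a bootstrap in the Green's representation, writing $M_\varepsilon:=\|u_\varepsilon\|_{L^\infty(\Omega\setminus\bigcup_j B_{d/2}(x_{j,\varepsilon}))}$, produces
\[
M_\varepsilon\le C\lambda_\varepsilon^{-(N-2)/2}+C M_\varepsilon^{(N+2)/(N-2)}+C\varepsilon M_\varepsilon,
\]
and because $(N+2)/(N-2)>1$ and $M_\varepsilon=o(1)$, the last two terms are absorbed, yielding the desired quantitative bound.

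The $C^1$ half is handled by differentiating the Green's representation and repeating the splitting with $|\nabla_x G(y,x)|\lesssim |y-x|^{1-N}$; the only new point is the borderline singularity at $y=x$, which is integrable away from the $x_{j,\varepsilon}$'s and is treated by extracting a small ball $B_{d/4}(x)$ and using the uniform smallness of $u_\varepsilon$ there that was established in the previous step. I expect the bootstrap argument---namely, combining the qualitative vanishing of $u_\varepsilon$ outside shrinking balls around the concentration points with the Green's representation to upgrade this to a quantitative rate---to be the most delicate part, since the decomposition and \eqref{lp21} alone do not directly control $w_\varepsilon$ in $L^\infty$.
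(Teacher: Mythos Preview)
Your approach—Green's representation combined with a qualitative-then-quantitative bootstrap—is in the same spirit as the paper's, which also relies on local elliptic $L^\infty$ estimates. The paper's argument is considerably shorter: it invokes the local boundedness estimate for subsolutions (Theorem~8.17 in Gilbarg--Trudinger) directly, observing that on $B_d(y)\subset\Omega\setminus\bigcup_j B_d(x_{j,\varepsilon})$ the right-hand side $f_\varepsilon=u_\varepsilon^{(N+2)/(N-2)}+\varepsilon u_\varepsilon$ satisfies $\|f_\varepsilon\|_{L^{q/2}}=o(1)\sup_{B_d(y)} u_\varepsilon$ (using first uniform boundedness, then smallness, of $u_\varepsilon$ away from the blow-up points), and then finishes by Moser iteration with $\|u_\varepsilon\|_{L^2(B_d(y))}$ as the input.

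There is, however, a genuine gap in your bootstrap inequality as written. To bound $u_\varepsilon(x)$ for $x\in\Omega\setminus\bigcup_j B_{d/2}(x_{j,\varepsilon}^{(1)})$ via the Green's representation, you must split the integral at some radius $r<d/2$ so that $G(y,x)$ is bounded on the near pieces $B_r(x_{j,\varepsilon}^{(1)})$; the complementary far piece is then $\Omega\setminus\bigcup_j B_r(x_{j,\varepsilon}^{(1)})$, a \emph{strictly larger} set than $\Omega\setminus\bigcup_j B_{d/2}(x_{j,\varepsilon}^{(1)})$. Hence the $M_\varepsilon$ appearing on the right of your inequality is really $\|u_\varepsilon\|_{L^\infty(\Omega\setminus\bigcup_j B_r)}$, which dominates the $M_\varepsilon$ on the left, and the absorption $CM_\varepsilon^{(N+2)/(N-2)}=o(M_\varepsilon)$ does not close the loop. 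Iterating finitely many times only yields $M_\varepsilon(2d)\le C\lambda_\varepsilon^{-(N-2)/2}+o(1)$, not the quantitative rate. The fix—and what the paper effectively does—is to feed the elliptic estimate with an integral norm rather than $L^\infty$: once the potential $u_\varepsilon^{4/(N-2)}+\varepsilon$ is uniformly bounded on $B_d(y)$, the local Moser bound produces $\sup_{B_{d/2}(y)}u_\varepsilon\le C\|u_\varepsilon\|_{L^2(B_d(y))}$, and this right-hand side is controlled \emph{directly} by the decomposition \eqref{4-18-11} together with \eqref{A--1} and \eqref{lp21}, with no self-referential $L^\infty$ bootstrap needed.
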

\begin{proof}
First, by Theorem 8.17 in \cite{Gilbarg}, for any $y\in \Omega\backslash\displaystyle\bigcup^k_{j=1}B_{2d}(x_{j,\varepsilon})$,  we find
\begin{flalign*}
\sup_{B_{d/2}(y)} u_\varepsilon(x)\leq C\Big( \|u_{\varepsilon}\|_{L^2\big(B_{d}(y)\big)}+\|f_\varepsilon\|_{L^{q/2}\big(B_{d}(y)\big)}\Big).
\end{flalign*}
where $f_\varepsilon= u_\varepsilon^{\frac{N+2}{N-2}}+\varepsilon u_\varepsilon$ and some $q>N$.

Since $u_{\varepsilon}(x)$ is a  solution of \eqref{1.1} with \eqref{4-6-1}, then $u_{\varepsilon}(x)$ is uniformly bounded on $\Omega\backslash\displaystyle\bigcup^k_{j=1}B_{2d}(x_{j,\varepsilon})$. From this, we can get $$\|f_\varepsilon\|_{L^{q/2}\big(B_{d}(y)\big)}=o(1)\sup_{B_{d}(y)} u_\varepsilon(x).$$
Next, by Moser iteration, we find \eqref{at3}.
\end{proof}
\begin{Lem}
It holds
\begin{equation}\label{luo--1}
\begin{split}
 \int_{B_d(x_{j,\varepsilon})}& u_{\varepsilon}^{\frac{N+2}{N-2}}(y) dy=\frac{A}{
(\lambda_{j,\varepsilon})^{(N-2)/2}}
+O\Big(\frac{1}{
 \lambda_{ \varepsilon} ^{(N+2)/2}}\Big) +o\Big(\frac{\varepsilon}{\lambda_\varepsilon^{2}}\Big),
\end{split}
\end{equation}
where $A$ is the constant in \eqref{a1}.
\end{Lem}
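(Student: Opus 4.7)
The plan is to reduce the integral to the model bubble $U_{x_{j,\varepsilon},\lambda_{j,\varepsilon}}$ using the decomposition in \eqref{4-18-11} and then to expand the critical nonlinearity by Taylor's formula. Setting $U_j := U_{x_{j,\varepsilon},\lambda_{j,\varepsilon}}$ and
\[
\eta_\varepsilon := u_\varepsilon - U_j = -\varphi_{x_{j,\varepsilon},\lambda_{j,\varepsilon}} + \sum_{i\neq j}PU_{x_{i,\varepsilon},\lambda_{i,\varepsilon}} + w_\varepsilon,
\]
I will evaluate the model integral $\int_{B_d(x_{j,\varepsilon})} U_j^{(N+2)/(N-2)}$ exactly and then show that the contribution of $\eta_\varepsilon$ to $\int_{B_d(x_{j,\varepsilon})} u_\varepsilon^{(N+2)/(N-2)}$ is absorbed into the announced error terms.

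For the main term I would change variables $z=\lambda_{j,\varepsilon}(y-x_{j,\varepsilon})$ to get $\int_{B_d(x_{j,\varepsilon})}U_j^{(N+2)/(N-2)}\,dy=\lambda_{j,\varepsilon}^{-(N-2)/2}\int_{B_{d\lambda_{j,\varepsilon}}(0)}U_{0,1}^{(N+2)/(N-2)}\,dz$. Since $U_{0,1}^{(N+2)/(N-2)}(z)\sim C|z|^{-(N+2)}$ as $|z|\to\infty$, the tail integral outside $B_R$ is $O(R^{-2})$, so choosing $R=d\lambda_{j,\varepsilon}$ yields $\int_{B_d(x_{j,\varepsilon})}U_j^{(N+2)/(N-2)}=A\lambda_{j,\varepsilon}^{-(N-2)/2}+O(\lambda_{j,\varepsilon}^{-(N+2)/2})$, which is precisely the main term plus the claimed $O(\lambda_\varepsilon^{-(N+2)/2})$ error.

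For the remainder I use the elementary inequality $\bigl|(a+b)^{(N+2)/(N-2)}-a^{(N+2)/(N-2)}\bigr|\le C\bigl(a^{4/(N-2)}|b|+|b|^{(N+2)/(N-2)}\bigr)$ for $a>0$ (inserting the middle term $a^{4/(N-2)-1}b^2$ in the case $N=5$ where $p>2$), applied with $a=U_j$ and $b=\eta_\varepsilon$. The three pieces of $\eta_\varepsilon$ are handled separately: from \eqref{A---1}, $|\varphi_{x_{j,\varepsilon},\lambda_{j,\varepsilon}}|=O(\lambda_{j,\varepsilon}^{-(N-2)/2})$ uniformly and a scaling estimate $\int_{B_d(x_{j,\varepsilon})}U_j^{4/(N-2)}\,dy=O(\lambda_{j,\varepsilon}^{-2})$ gives an $O(\lambda_{j,\varepsilon}^{-(N+2)/2})$ contribution; by \eqref{A--1} the bumps $PU_{x_{i,\varepsilon},\lambda_{i,\varepsilon}}$ with $i\ne j$ are bounded by $C\lambda_\varepsilon^{-(N-2)/2}$ on $B_d(x_{j,\varepsilon})$, giving the same order; for the $w_\varepsilon$ piece I would apply H\"older (via $L^2\times L^2$ combined with Poincar\'e, case-split by $N$) together with the bound \eqref{lp21}, which produces a contribution of order $O(\lambda_\varepsilon^{-(N+2)/2})+o(\varepsilon\lambda_\varepsilon^{-2})$. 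The quadratic leftover $\int|\eta_\varepsilon|^{(N+2)/(N-2)}$ is of strictly smaller order by the same pointwise and energy bounds. Combining the main term with the three error contributions produces the stated expansion.

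The principal obstacle will be estimating the $w_\varepsilon$ contribution sharply for large $N$, since direct H\"older pairings of $U_j^{4/(N-2)}$ with $w_\varepsilon\in H^1_0$ become borderline or fail as the relevant exponent approaches the Sobolev critical range (the weight $U_j^{4/(N-2)}$ is only barely integrable to the needed power). I plan to circumvent this by pairing via $L^2$ with a scaling estimate of $\|U_j^{4/(N-2)}\|_{L^2(B_d)}$ rather than via $L^{2N/(N+2)}\times L^{2N/(N-2)}$, and, when needed, by exploiting the orthogonality conditions $w_\varepsilon\in E_{x_{j,\varepsilon},\lambda_{j,\varepsilon}}$ to extract the first-order cancellation required to upgrade the resulting $O(\varepsilon\lambda_\varepsilon^{-2})$ bound into the $o(\varepsilon\lambda_\varepsilon^{-2})$ gain claimed.
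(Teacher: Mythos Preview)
Your approach is essentially the paper's: expand $u_\varepsilon^{(N+2)/(N-2)}$ around the bubble, compute the leading term by scaling, and absorb the remainder via H\"older together with the bound \eqref{lp21} on $\|w_\varepsilon\|$. The paper takes $PU_{x_{j,\varepsilon},\lambda_{j,\varepsilon}}$ rather than $U_{x_{j,\varepsilon},\lambda_{j,\varepsilon}}$ as base and keeps the signed linear Taylor term explicitly via the inequality $(a+b)^p-a^p-pa^{p-1}b=O(b^p+a^{p-p^*}b^{p^*})$, but this is cosmetic.

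One correction to your diagnosis: the borderline in the $w_\varepsilon$ contribution is not at large $N$ but at $N=5$. For $N\ge 6$ your $L^2\times L^2$ pairing already gives
\[
\|U_j^{4/(N-2)}\|_{L^2(B_d)}\,\|w_\varepsilon\|_{L^2}
= O\big(\lambda_\varepsilon^{\min\{(4-N)/2,\,-2\}}\big)\cdot O\big(\lambda_\varepsilon^{-(N+2)/2}+\varepsilon\lambda_\varepsilon^{-2}\big)
= O\big(\lambda_\varepsilon^{-(N+2)/2}\big)+o\big(\varepsilon\lambda_\varepsilon^{-2}\big),
\]
with harmless log factors at $N=6,8$; no orthogonality is needed. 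For $N=5$ the same pairing yields only $O(\varepsilon\lambda_\varepsilon^{-2})$, and the orthogonality conditions do not upgrade this since $U_j^{4/(N-2)}$ is not in the span of $\partial_\lambda(U_j^{p}),\partial_{x_i}(U_j^{p})$. The paper is equally terse here; in practice $O(\varepsilon\lambda_\varepsilon^{-2})$ suffices, as it is absorbed by the $\varepsilon\lambda_\varepsilon^{-(N-2)/2}$ term in \eqref{clp-2}.
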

\begin{proof}
By H\"older's inequality, we calculate
\begin{equation*}
\int_{B_d(x_{j,\varepsilon})} PU^{\frac{4}{N-2}}_{x_{j,\varepsilon}, \lambda_{j,\varepsilon}}w_\varepsilon(x)=o\Big(\frac{\varepsilon}{\lambda_\varepsilon^{2}}\Big),~\mbox{and}~
\int_{B_d(x_{j,\varepsilon})} PU^{\frac{1}{3}}_{x_{j,\varepsilon}, \lambda_{j,\varepsilon}}w^2_\varepsilon(x)
=o\Big(\frac{\varepsilon}{\lambda_\varepsilon^{2}}\Big).
\end{equation*}
Also, we know
\begin{equation*}
\begin{split}
\int_{B_d(x_{j,\varepsilon})}&\Big(\displaystyle\sum_{l=1,l\neq j}^kPU_{x_{l,\varepsilon},\lambda_{l,\varepsilon}} +w_{\varepsilon}\Big)^{\frac{N+2}{N-2}}
=O\Big(\frac{1}{\lambda^{(N+2)/2}_\varepsilon}\Big)+o\Big(\frac{\varepsilon}{\lambda_\varepsilon^{2}}\Big).
\end{split}
\end{equation*}
On the other hand, for any $a,b\in \R^+$ and $p>1$, we have the following inequality:
\begin{equation*}
(a+b)^p-a^p-pa^{p-1}b=
 O(b^p+a^{p-p^*}b^{p^*}),~\mbox{with}~p^*=\min \{2,p\}.
\end{equation*}
Combining the above estimates, we get \eqref{luo--1}.
\end{proof}

Similar to the proof of \eqref{luo--1}, we can find following estimates.
\begin{Lem}
For any $j=1,\cdots,N$,
it holds
\begin{equation}\label{tian-2}
\int_{B_d(x_{j,\varepsilon})}\big|y-x_{j,\varepsilon}\big|u_{\varepsilon}^{\frac{N+2}{N-2}}(y) dy=
O\Big(\frac{1}{
 \lambda_{\varepsilon}^{(N+2)/2}}\Big)+o\Big(\frac{\varepsilon}{\lambda_\varepsilon^{2}}\Big),
\end{equation}
and
\begin{equation}\label{luopeng1}
\int_{B_d(x_{j,\varepsilon})} \big|y-x_{j,\varepsilon}\big|^3 u_{\varepsilon}^{\frac{N+2}{N-2}}(y) dy
=O\Big(\frac{1}{
 \lambda_{ \varepsilon} ^{(N+2)/2}}\Big)+o\Big(\frac{\varepsilon}{\lambda_\varepsilon^{2}}\Big).
\end{equation}
Also for any $i,j,m=1,\cdots,N$, it holds
\begin{equation}\label{8-17-6}
\int_{B_d(x_{j,\varepsilon})}\big(y_i-x_{j,\varepsilon,i}\big)
\big(y_m-x_{j,\varepsilon,m}\big)  u_{\varepsilon}^{\frac{N+2}{N-2}}(y) dy
=\frac{\delta_{im}\log \lambda_{i,\varepsilon}}{
 \lambda_{i,\varepsilon}^{(N+2)/2}} + O\Big(\frac{1}{
 \lambda_{\varepsilon}^{(N+2)/2}}\Big)+o\Big(\frac{\varepsilon}{\lambda_\varepsilon^{2}}\Big),
\end{equation}
where $\delta_{im}=1$ if $i=m$ and $\delta_{im}=0$ if $i\neq m$.
\end{Lem}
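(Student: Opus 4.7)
The plan is to prove all three estimates by following the strategy used for \eqref{luo--1}: decompose $u_\varepsilon$ around the $j$-th bubble as
\begin{equation*}
u_\varepsilon = PU_{x_{j,\varepsilon},\lambda_{j,\varepsilon}} + R_{\varepsilon,j}, \qquad R_{\varepsilon,j} := \sum_{l\neq j}PU_{x_{l,\varepsilon},\lambda_{l,\varepsilon}} + w_\varepsilon,
\end{equation*}
expand the power $(N+2)/(N-2)$ by the algebraic identity $(a+b)^p = a^p + p\,a^{p-1}b + O(b^p + a^{p-p^*}b^{p^*})$ with $p^*=\min\{2,p\}$, and split each integrand into a leading ``bubble'' piece, a ``linear'' cross piece, and two nonlinear remainders. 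On $B_d(x_{j,\varepsilon})$ the bumps $PU_{x_{l,\varepsilon},\lambda_{l,\varepsilon}}$ with $l\neq j$ are uniformly $O(\lambda_\varepsilon^{-(N-2)/2})$ by \eqref{A--1}, while the $H^1$-size of $w_\varepsilon$ is controlled by \eqref{lp21}.

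For \eqref{tian-2} (the signed integrand $(y_i - x_{j,\varepsilon,i})\,u_\varepsilon^{(N+2)/(N-2)}$, in the form in which it enters the Taylor expansion \eqref{tian-1}) and the off-diagonal case $i\neq m$ of \eqref{8-17-6}, the leading bubble piece vanishes by the $y\mapsto 2x_{j,\varepsilon}-y$ symmetry of $U_{x_{j,\varepsilon},\lambda_{j,\varepsilon}}^{(N+2)/(N-2)}$, so the integral reduces to the cross and remainder pieces. The cross piece $\int(y-x_{j,\varepsilon})^{\alpha}\,PU_j^{4/(N-2)}\,R_{\varepsilon,j}\,dy$, with $|\alpha|\in\{1,2\}$, is bounded by H\"older with exponents $2^{*}=2N/(N-2)$ and $2N/(N+2)$, yielding a contribution of size $\lambda_{j,\varepsilon}^{-|\alpha|-1}\|R_{\varepsilon,j}\|$ that fits into $o(\varepsilon\lambda_\varepsilon^{-2})+O(\lambda_\varepsilon^{-(N+2)/2})$; the two nonlinear remainders are majorised by Sobolev embedding in the same way. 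For \eqref{luopeng1} and the diagonal case $i=m$ of \eqref{8-17-6} there is no cancellation, and one computes the leading bubble piece directly by the rescaling $z=\lambda_{j,\varepsilon}(y-x_{j,\varepsilon})$:
\begin{equation*}
\int_{B_d(x_{j,\varepsilon})}\bigl|y-x_{j,\varepsilon}\bigr|^{3}\,U_{x_{j,\varepsilon},\lambda_{j,\varepsilon}}^{\frac{N+2}{N-2}}\,dy
\;=\;\frac{C}{\lambda_{j,\varepsilon}^{(N+2)/2}}\int_{|z|\le\lambda_{j,\varepsilon}d}\frac{|z|^{3}}{(1+|z|^{2})^{(N+2)/2}}\,dz
\;=\;O\!\Big(\frac{1}{\lambda_{j,\varepsilon}^{(N+2)/2}}\Big),
\end{equation*}
since the rescaled integrand behaves as $|z|^{1-N}$ at infinity and therefore contributes only a factor of order $\lambda_{j,\varepsilon}d$; for the quadratic integrand $(y_i-x_{j,\varepsilon,i})^{2}$ the analogous rescaled integral has an $\int r^{-1}\,dr$ behaviour, which is precisely the source of the $\log\lambda_{i,\varepsilon}$ factor in \eqref{8-17-6}, and the sphere average $\int_{S^{N-1}}z_i z_m\,d\sigma=c_N\,\delta_{im}$ accounts for the Kronecker structure.

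The main obstacle is to keep the error bookkeeping tight enough that (i) the correction from $\varphi_{x_{j,\varepsilon},\lambda_{j,\varepsilon}}:=U_{x_{j,\varepsilon},\lambda_{j,\varepsilon}}-PU_{x_{j,\varepsilon},\lambda_{j,\varepsilon}}$, of uniform size $O(\lambda_\varepsilon^{-(N-2)/2})$ by \eqref{A---1}; (ii) the interactions with the other bumps $l\neq j$; and (iii) the $w_\varepsilon$-terms, all fit under the claimed tolerance $O(\lambda_\varepsilon^{-(N+2)/2})+o(\varepsilon\lambda_\varepsilon^{-2})$. The norm estimate \eqref{lp21} is cleanest for $N>6$; for $N=5,6$ the logarithmic and $\varepsilon\lambda_\varepsilon^{-3/2}$ factors there make it necessary to track the $o(\varepsilon\lambda_\varepsilon^{-2})$ tail separately and to use the pointwise decay \eqref{at3} away from the peaks in order to absorb the remaining error, in close analogy with the derivation of \eqref{luo--1}.
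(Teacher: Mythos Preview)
Your approach is correct and is exactly what the paper intends: its own proof is the single line ``Similar to the proof of \eqref{luo--1}, we can find following estimates,'' and your decompose--expand--symmetry--H\"older elaboration is precisely that template, including your correct identification of the $\log\lambda_{j,\varepsilon}$ from the diagonal quadratic bubble integral and of the cancellation by oddness in the signed first-moment case (which is indeed the form actually used in \eqref{tian-1}). One small overclaim: the weight $(y-x_{j,\varepsilon})^{\alpha}$ does not in general buy an extra factor $\lambda_{j,\varepsilon}^{-|\alpha|}$ in the H\"older bound for the cross piece, but since $|y-x_{j,\varepsilon}|\le d$ on $B_d(x_{j,\varepsilon})$ the unweighted cross-term estimate already proved for \eqref{luo--1} suffices, so this does not affect the argument.
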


\noindent\textbf{Acknowledgments} The authors would like to thank the referees for their careful reading of this paper and for their valuable suggestions to improve the presentation and the style of the paper.
Cao, Luo and Peng were supported  by the Key Project of NSFC (No.11831009). Cao was partially supported by NSFC grants (No.11771469). Luo  was partially supported by NSFC grants (No.11701204) and the China Scholarship Council.
%-------------------------------------------------------------------------------------------------------------
\renewcommand\refname{References}
\renewenvironment{thebibliography}[1]{%
\section*{\refname}
\list{{\arabic{enumi}}}{\def\makelabel##1{\hss{##1}}\topsep=0mm
\parsep=0mm
\partopsep=0mm\itemsep=0mm
\labelsep=1ex\itemindent=0mm
\settowidth\labelwidth{\small[#1]}%
\leftmargin\labelwidth \advance\leftmargin\labelsep
\advance\leftmargin -\itemindent
\usecounter{enumi}}\small
\def\newblock{\ }
\sloppy\clubpenalty4000\widowpenalty4000
\sfcode`\.=1000\relax}{\endlist}
\bibliographystyle{model1b-num-names}
%\bibliography{refs}

\end{document}